\newtheorem{fait}{Fact}[section]
\newtheorem{theoreme}[fait]{Theorem}
\newtheorem{lemme}[fait]{Lemma}
\newtheorem{proposition}[fait]{Proposition}
\newcommand{\Pp}{P} 
\newcommand{\Pmil}{P}  
\newcommand{\PW}{P^W} 
\newcommand{\X}{\mathbf{X}}
\newcommand{\ud}{\mathrm{d}}
\newcommand{\tX}{\boldsymbol{\tau}}
\newcommand{\tB}{\tau}
\newcommand{\tla}{\boldsymbol{\sigma}}
\newcommand{\tlam}{\tla(re^{{}v},m^{-}_1)}
\newcommand{\sB}{\sigma}
\newcommand{\R}{\mathbf{R}}
\newcommand{\N}{\mathbf{N}}
\newcommand{\Z}{\mathbf{Z}}
\newcommand{\egloi}{\stackrel{\mathcal{L}}{=}}
\newcommand{\Wb}{\overline{W}}
\newcommand{\cvgloi}{\stackrel{\mathcal{L}}{\longrightarrow}}
\newcommand{\Wu}{\underline{W}}
\newcommand{\Bp}{c^+}
\newcommand{\Cma}{b^+_{v}}
\newcommand{\Cm}{b^+}
\newcommand{\Dma}{a^+_{v}}
\newcommand{\bm}{b^{-}_1}
\newcommand{\am}{a^{-}_1}
\newcommand{\mm}{m^{-}_1}
\newcommand{\apmd}{a_{{v},i}^{-}}
\newcommand{\bpmd}{b_{{v},i}^{-}}
\newcommand{\apd}[1][2]{a_{{v},#1}^{-}}
\newcommand{\amd}[1][1]{a_{{v},#1}^{-}}
\newcommand{\bpd}[1][2]{b_{{v},#1}^{-}}
\newcommand{\bmd}[1][1]{b_{{v},#1}^{-}}
\newcommand{\Bpd}{c^+_{v}}
\newcommand{\Bpg}{\widehat{c}^+_{v}}
\newcommand{\mpmd}{m^{-}_{{v},i}}
\newcommand{\sam}{\tla^{-}_1}
\newcommand{\smd}{\tla_{{v},1}^{-}}
\newcommand{\spd}{\tla_{{v},2}^{-}}
\newcommand{\smg}{\widehat{\tla}_{{v},1}^{-}}
\newcommand{\spg}{\widehat{\tla}_{{v},2}^{-}}
\newcommand{\LX}{\mathbf{L}}
\newcommand{\LB}{L}
\newcommand{\spmd}{\tla_{{v},i}^{-}}
\newcommand{\tm}{\boldsymbol{\tau}(\mm)}
\newcommand{\LXm}{\LX(\sam,x)}
\newcommand{\LXpmd}{\LX(\spmd,x)}
\newcommand{\nm}{n^1}
\newcommand{\genv}{\Gamma_{v}^{1}\cap\Gamma_{v}^{2}\cap\widehat{\Gamma}_{v}^{1}\cap\widehat{\Gamma}_{v}^{2}}
\newcommand{\penv}{\Gamma_{v}^{3}\cap\Gamma_{v}^{2}\cap\widehat{\Gamma}_{v}^{3}\cap\widehat{\Gamma}_{v}^{2}}
\newcommand{\mpd}[1][2]{m_{{v},#1}^{-}}
\newcommand{\mmd}[1][1]{m_{{v},#1}^{-}}
\newcommand{\mmg}[1][1]{\widehat{m}_{{v},#1}^{-}}
\newcommand{\dpd}{d_{{v}}^{+}}
\newcommand{\dmd}[1][1]{d_{{v},#1}^{-}}
\newcommand{\epd}{e_{{v}}^{+}}
\newcommand{\emd}[1][1]{e_{{v},#1}^{-}}
\newcommand{\Upd}{U_{{v}}^{+}}
\newcommand{\Umd}[1][1]{U_{{v}}^{-}}
\newcommand{\Upg}{\widehat{U}^{+}_{{v}}}
\newcommand{\Umg}[1][1]{\widehat{U}_{{v}}^{-}}
\newcommand{\sigp}{\boldsymbol{\sigma}_{{v}}}
\newcommand{\Ipd}[1][{v}]{I^+_{#1}}
\newcommand{\Imd}[1][{v}]{I^-_{#1}}
\newcommand{\Ipg}[1][{v}]{\widehat{I}^+_{#1}}
\newcommand{\Img}[1][{v}]{\widehat{I}^-_{#1}}
\newcommand{\Imdi}[1][{v}]{I^-_{#1}}
\newcommand{\isp}[1][{v}]{j_{#1}}
\newcommand{\Isp}[1][{v}]{J_{#1}}
\newcommand{\shp}{\tla_{{v}}^{\rho}}
\newcommand{\shm}{\tla_{{v}}^{r}}
\newcommand{\A}{\mathcal{A}^1}
\newcommand{\Appm}{\mathcal{A}^i_{v}}
\newcommand{\B}{\mathcal{B}^1}
\newcommand{\Bi}{\mathcal{B}^i_{v}}
\newcommand{\C}{\mathcal{C}}
\newcommand{\D}{\mathcal{D}}
\newcommand{\hW}{\widehat{W}}
\newcommand{\bd}{b_{{v},3}^{-}}
\renewcommand{\b}{b^{-}_0}
\newcommand{\bim}{b^{-}_{{v},i-1}}
\newcommand{\m}{m^+}
\newcommand{\ma}{m^+_{v}}
\newcommand{\bbb}[1][]{b^-_{{v}_{#1},1}}
\newcommand{\mmm}[1][]{m^-_{{v}_{#1},1}}
\newcommand{\mmmg}[1][]{\widehat{m}^-_{{v}_{#1},1}}
\newcommand{\bet}[1][n]{\beta_{#1}}
\newcommand{\gam}[1][n]{\gamma_{#1}}
\newcommand{\mmu}[1][n]{\mu_{#1}}
\newcommand{\LS}{\xi}
\newcommand{\et}[1][n]{\eta_{#1}}
\newcommand{\al}[1][n]{{v}_{#1}}
\renewcommand{\mag}{\widehat{m}^{+}_{v}}
\begin{document}


\title[Asymptotics for the local time in Brownian environment]{Almost sure asymptotics for the local time of a diffusion in Brownian environment}
\author{Roland DIEL}
\address{Roland DIEL\newline
Laboratoire MAPMO - C.N.R.S. UMR 6628\newline
F\'ed\'eration Denis-Poisson, Universit\'e d'Orl\'eans, (Orl\'eans France)}
 \subjclass[2000]{ 60J25; 60J55}
 \keywords{Diffusion in Brownian environment, Local time}
\date{}

\begin{abstract}
We study here the asymptotic behavior of the maximum local time $\LX^*(t)$ of the diffusion in Brownian environment. Shi \cite{Shi} proved that, surprisingly, the maximum speed of $\LX^*(t)$ is at least $t\log(\log(\log t))$ whereas in the discrete case it is $t$. We show that $t\log(\log(\log t))$ is the proper rate and that for the minimum speed the rate is the same as in the discrete case (see Dembo, Gantert, Peres and Shi \cite{DemGanPerShi}) namely $t/\log(\log(\log t))$. We also prove a localization result: almost surely for large time, the diffusion has spent almost all the time in the neighborhood of four points which only depend on the environment.
\end{abstract}
\maketitle

\section{Introduction}

Let $\left(W(x)\ ,\ x\in\R\right)$ be a two-sided one-dimensional Brownian motion on $\mathbf{R}$ with $W(0)=0$. We call \emph{diffusion process in the environment $W$} a process $\left(\X(t), t \in \R^+\right)$ whose infinitesimal generator given $W$ is
\begin{displaymath}
	\frac{1}{2}e^{W(x)}\frac{d}{dx}\left( e^{-W(x)}\frac{d}{dx} \right).
\end{displaymath}
Notice that if $W$ were differentiable, $(\X(t),t \in \R^+)$ would be solution of the following stochastic differential equation 
\begin{displaymath}
\left\{ \begin{array}{ll}
\ud \X(t)=\ud \beta(t)-\frac{1}{2}W'(\X(t))\ud t,\\
\X(0)=0
\end{array} \right.
\end{displaymath}
in which $\beta$ is a standard one-dimensional Brownian motion independent of $W$. Of course as we choose for $W$ a Brownian motion, the previous equation does not have any rigorous sense but it explains the denomination \emph{environment} for $W$. 

This process was first introduced by Schumacher  \cite{Schumacher} and Brox \cite{Brox}. It is recurrent and sub-diffusive with asymptotic behavior in $(\log t)^2$. Moreover Brox showed in \cite{Brox} that $\X$ has the property to be localized in the neighborhood of a point $m_{\log t}$ only depending on $t$ and $W$. Later, Tanaka \cite{Tanaka,Tanaka3} and Hu \cite{Hu} obtained deeper localization results. The limit law of $m_{\log t}/(\log t)^2$ and therefore of $\X(t)/(\log t)^2$ is independently made explicit by Kesten \cite{Kesten2} and Golosov \cite{Golosov1}.

Kesten and Golosov's aim was actually to determine the limit law of a random walk in random environment, introduced by Solomon \cite{Solomon}, which is often considered as the discrete analogue of Brox's model. Sinai \cite{Sinai} proved that this random walk $(S_n,n \in \N)$, now called Sinai's walk, has the same limit distribution as Brox's. Hu and Shi \cite{HuShi2} got the almost sure rates of convergence of the $\limsup$ and $\liminf$ of $\X$ and $S$. It appears that these rates are the same. 

In the present paper, we are interested in the local time of the diffusion $\X$. This process, denoted $(\LX(t,x),t\geq 0,x\in\R)$, is the density of the occupation measure of $\X$: $\LX$ is the unique a.s. jointly continuous process such that for each Borel set $A$ and for any $t\geq0$,
\begin{equation}
	\nu_t(A):=\int_0^t\mathbf{1}_A(\X_s)\ud s=\int_\R \mathbf{1}_A(x)\LX(t,x)\ud x.\label{tpsocc}
\end{equation}
We shall see later that such a process exists.
The first results on the behavior of $\LX$ can be found in \cite{Shi} and \cite{HuShi1}. In particular Hu and Shi proved in \cite{HuShi1} that for any $x \in \R$, 
\begin{align*} 
\frac{\log(\LX(t,x))}{\log t} \cvgloi U \wedge \hat{U}, \ t \rightarrow + \infty
\end{align*}
 where $U$ and $\hat{U}$ are independent variables uniformly distributed in $(0,1)$ and  $\cvgloi$ is the convergence in law. Note that in the same paper it is also proved that Sinai's walk local time $\LS$ has the same behavior. The process $\LS$ is the time spent by $S$ at $x$ before time $n$  for $n\in\mathbf{N}$ and $x\in\Z$: $\LS(n,x):=\sum_{k=0}^n\mathbf{1}_{S_k=x}$. 
This result shows that the local time in a fixed point can vary a lot. So to study localization, a good quantity to look at is the maximum of the local time  of $\X$, $$\LX^*(t):=\sup_{x\in\R}\LX(t,x).$$
Shi was the first one to be interested in this process; in \cite{Shi} he gave a lower bound on the $\limsup$ behavior. Almost surely,
\begin{align*}
 \limsup_{t\rightarrow\infty}\frac{\LX^*(t)}{t\log_3(t)}\geq\frac{1}{32}
\end{align*}
where for any $i\in\mathbf{N}^*$, $\log_{i+1}=\log\circ\log_i$ and $\log_1=\log$. In the same paper, he computed the similar rate in the discrete case: define for $n\in\N$, $\LS^*(n):=\sup_{x\in\Z}\LS(n,x)$, there is constant $c\in(0,\infty)$ such that, almost surely,
$$
\limsup_{n\rightarrow\infty}\frac{\LS^*(n)}{n}=c.
$$
Thereby he highlighted a different behavior for the discrete model and for the continuous one.
The limit law of $\LX^*(t)$ was then determined in \cite{AndDie}:
\begin{align*} 
\frac{\LX^*(t)}{t} \cvgloi \frac{1}{\int_{-\infty}^{\infty}e^{-\widetilde{W}(x)}\ud x}, \ t \rightarrow + \infty
\end{align*}
where $\widetilde{W}$ has the same law as the environment conditioned to stay positive. 
But unlike the discrete case (see \cite{GanPerShi}), this result does not allow to obtain an upper bound on the almost sure behavior.

Here we prove that the quantity $\log_3(t)$ is the correct renormalization for the $\limsup$ and an analog result for the $\liminf$: 
\begin{theoreme}\label{thcentral}
 Almost surely,
\begin{align*} 
\frac{1}{32}\leq\limsup_{t\rightarrow\infty}\frac{\LX^*(t)}{t\log_3(t)}\leq \frac{e^2}{2}\textrm{ and}\\
\frac{j^2_{0}}{64}\leq\liminf_{t\rightarrow\infty}\frac{\log_3(t)\LX^*(t)}{t}\leq \frac{e^2\pi^2}{4}.
\end{align*}
where $j_{0}$ is the smallest strictly positive root of Bessel function $J_0$.
\end{theoreme}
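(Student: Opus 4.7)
The plan is to reduce the almost sure asymptotics to a Borel--Cantelli argument driven by precise tail estimates for a single functional of the Brownian environment. Let us write $F_W:=\int_{V}e^{-W(x)+W(m)}\ud x$, where $V$ is the depth-$h$ valley of $W$ around the origin with bottom $m$. Combining a strengthened Brox--Tanaka localization with the Laplace representation of diffusions in a potential, one should obtain $\LX^*(t)=(1+o(1))\,t/F_{W^{(t)}}$ almost surely, where $W^{(t)}$ is the relevant local piece of environment at scale $h=\log t$; this is the quantitative refinement of the convergence in law of \cite{AndDie}. The key point is that the $(1+o(1))$ error must be controlled uniformly along the sequence of times appearing in the Borel--Cantelli step below.

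Next one estimates the two tails of $F_W$, using the fact that its limit distribution is that of $\int e^{-\widetilde W}$, with $\widetilde W$ Brownian on each side of $0$ conditioned to stay positive, i.e. a three-dimensional Bessel process on each side. Heuristically $F_W$ is small exactly when the conditioned environment grows rapidly away from $0$, yielding a tail $P(F_W\leq u)$ decaying super-exponentially in $1/u$; and $F_W$ is large exactly when the conditioned environment stays close to $0$ for a long time, which via first-passage formulas for Bessel$(3)$ gives $P(F_W\geq u)\approx \exp(-j_0^2/(2u))$ up to slowly varying factors, where $j_0$ is the smallest positive zero of $J_0$. These precise tails explain the four constants appearing in the theorem: the Bessel zero enters the $\liminf$ lower bound $j_0^2/64$, while $e^2/2$ and $e^2\pi^2/4$ are produced by optimizing the super-exponential tail estimates.

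The third step is the Borel--Cantelli argument itself. As $h=\log t$ grows, the current depth-$h$ valley changes at times when the environment creates a new barrier of sufficient height. A renewal-type decomposition of $W$ to the left and right of $0$, in the spirit of those already used in \cite{Shi} and \cite{HuShi1}, shows that the successive valleys give asymptotically independent samples of $F_W$ and that the effective number of such samples in time $[1,t]$ is of order $\log_2 t$. Since extrema of $N$ independent variables with the tails of $F_W$ or $F_W^{-1}$ described above grow logarithmically in $N$, and $\log\log_2 t=\log_3 t$, one recovers the normalization in the theorem. Borel--Cantelli on a deterministic subsequence $t_n$ plus interpolation via monotonicity of $t\mapsto t/\LX^*(t)$ for the $\liminf$ and a direct comparison for the $\limsup$ finishes the proof.

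The main obstacles I anticipate are threefold. First, the uniform quantitative localization needed in step one: rare environments where the valley has two competing near-bottoms and the diffusion oscillates between them must be ruled out, which I expect to require a careful event-by-event analysis of ``bad'' valley shapes. Second, the asymptotic independence of the successive valleys, which I would derive from the strong Markov property of $W$ applied at a sequence of stopping places determined by the height structure of the environment (suitable ``ladder'' times for the Brownian potential). Third, tracking the leading exponents in the tails of $F_W$ finely enough to produce the stated numerical constants; the gap between the $\limsup$ lower bound $1/32$ (due to \cite{Shi}) and my upper bound $e^2/2$ (and likewise between $j_0^2/64$ and $e^2\pi^2/4$ for the $\liminf$) reflects the inherent looseness of deriving almost sure rates from tail bounds that are sharp only to exponential order.
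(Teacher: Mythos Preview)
Your overall architecture---reduce $\LX^*(e^v)$ to an exponential functional of the valley, estimate its tails via the Bessel(3) description coming from Tanaka's theorem, run Borel--Cantelli over the renewal sequence of valleys (whose cardinality up to depth $v$ is indeed $\sim\log v=\log_2 t$), and interpolate by monotonicity of $\LX^*$---matches the paper. The genuine gap is in your first step. The event that at time $e^v$ the diffusion is localized in a \emph{single} valley, so that $\LX^*(e^v)=(1+o(1))\,e^v/F_{W^{(t)}}$, fails with probability of exact order $\log v/v$: this is precisely the probability of your ``competing near-bottoms'' configuration, in which the valleys of depth $v-c\log v$ and $v+c\log v$ have distinct bottoms. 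For the two bounds $\limsup\le e^2/2$ and $\liminf\ge j_0^2/64$ one must control $\LX^*$ along a subsequence $(v_n)$ dense enough that $v_{n+1}-v_n\to0$ (otherwise the monotonicity interpolation costs an uncontrolled factor $e^{v_{n+1}-v_n}$), and along any such subsequence $\sum_n\log v_n/v_n=\infty$, so the first Borel--Cantelli lemma cannot be applied. These bad environments are too frequent to be ``ruled out'' as you propose.

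The paper's resolution is not to exclude them but to \emph{enlarge} the localization: it tracks four candidate bottoms (two on each side of the origin, covering both the current and the next-to-emerge sub-valley), which squares the failure probability to $O((\log v/v)^2)$, summable along $v_n=n^{2/3}$. The cost is that one no longer obtains an asymptotic equality but only a bracket $e^v/J_v\le\LX^*(e^v)\le(1+o(1))\,e^v/j_v$, where $J_v$ is the \emph{sum} and $j_v$ the \emph{minimum} of the four valley integrals; the factor four between $J_v$ and a single $F_W$ is exactly what degrades the constant from $j_0^2/16$ to $j_0^2/64$. (For the one-sided bounds $\limsup\ge1/32$ and $\liminf\le e^2\pi^2/4$ a sparse subsequence $v_n=e^n$ suffices and there the single-valley-per-side picture does work, via the second Borel--Cantelli lemma on genuinely independent valleys.) One smaller correction: your right-tail formula $P(F_W\ge u)\approx\exp(-j_0^2/(2u))$ has the exponent inverted; by Le~Gall's Ray--Knight identity $\int_0^\infty e^{-R}$ equals four times the hitting time of $1$ by a two-dimensional squared Bessel process, whence $P(\int_0^\infty e^{-R}\ge u)\le K\exp(-j_0^2u/8)$, exponential in $u$ rather than in $1/u$.
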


We can compare these results to the ones in the discrete case, see \cite{Shi} (and \cite{GanPerShi} for the value of the constant) for the $\limsup$ and \cite{DemGanPerShi} for the $\liminf$: there exists two constants $c,c'\in(0,\infty)$ such that almost surely,
\begin{displaymath} 
\begin{array}{ccc}
\displaystyle \limsup_{n\rightarrow\infty}\frac{\LS^*(n)}{n}=c&\textrm{ and }&\displaystyle \liminf_{n\rightarrow\infty}\frac{\log_3(n)\LS^*(n)}{n}=c'.
\end{array}
\end{displaymath}
As Shi noted it, the $\limsup$ in the continuous case and in the discrete case have a different normalization, but we see with Theorem \ref{thcentral} that the normalization is the same for the $\liminf$.

There is the following heuristic interpretation: in the discrete case like in the continuous one, $\LX^*(t)/t$ behaves approximately in the best case like the inverse of the integral of the exponential of the "steepest" environment and in the worst case like the inverse of the integral of the exponential of the "flattest" one. However if in discrete case there is a steepest environment, in the continuous case it is possible to be as steep as we want, and in the two cases, the environment can be as flat as we want. It explains the difference for the $\limsup$. To compute these rates, we need to study carefully both the behavior of the "steepest" and "flattest" environment and the place where the diffusion spent the most of its time. Indeed, as we said, for large $t$ the process $\X$ is in the neighborhood of a point $m_{\log t}$. This event happens with a probability which tends to $1$ when $t$ goes to infinity, however it does not grow fast enough with $t$ to derive almost sure results. So here we relax the localization of the particle to the neighborhood of four points instead of one. Then we obtain the following theorem:
\begin{theoreme}\label{thmloc}
Fix $\epsilon>0$ and $c_0>0$. There are four processes $m^1_t$, $m^2_t$, $m^3_t$ and $m^4_t$ only depending on the environment $W$ such that if we denote
 $$
 I_t:=\bigcup_{i=1}^4[m^i_t-(\log_2 t)^{4+\epsilon},\ m^i_t+(\log_2t)^{4+\epsilon}],
 $$
 then, almost surely,
 $$
 \lim_{t\rightarrow\infty}\frac{(\log t)^{c_0}}{t}\nu_t\left(\R\setminus I_t\right)=0.
 $$
\end{theoreme}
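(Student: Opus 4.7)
The plan is to build on the Brox--Sinai localization philosophy: with probability tending to one, $\X(t)$ is concentrated near a single point $m_{\log t}$ determined by $W$. Since this convergence in probability is not quick enough to give an almost sure statement directly, I would enlarge the set of admissible localization sites. Concretely, I propose to choose $m^1_t,m^2_t,m^3_t,m^4_t$ as the bottoms of the at most four deepest ``valleys'' of $W$ whose depth exceeds $\log t - K\log_3 t$ for a large constant $K$ (say, the two such valleys on each side of $0$, ordered by proximity). The factor four accounts for the need to simultaneously monitor the currently dominant valley on each side of the origin and its most likely successor: as $t$ grows, the dominant minimum shifts, and during a transition period there can be two comparable valleys; a symmetric accounting on the left and right of $0$ then yields four candidates.

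Given these four points, I would use the occupation time formula \eqref{tpsocc} together with a Ray--Knight description of $\LX$ (this paper's main technical tool, as suggested by the notation $\LX(\sigma^-_i,\cdot)$ introduced in the macros) to bound $\nu_t(\R\setminus I_t)$. On a sufficiently long time scale, the local time $\LX(t,\cdot)$ behaves like a multiple of the speed density $e^{-W(x)}$ restricted to a region containing $I_t$. Thus one can show
$$
\nu_t(\R\setminus I_t) \;\leq\; t\cdot\frac{\int_{B_t\setminus I_t}e^{-W(x)}\ud x}{\int_{B_t}e^{-W(x)}\ud x}\;+\;\text{(error from excursions away from $B_t$)},
$$
where $B_t$ is a box of radius $O((\log t)^2)$ around $0$, and the error is controlled by Brox-type tightness estimates for $\X(s)/(\log s)^2$ uniformly in $s\in[0,t]$.

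The environmental estimate to prove is that almost surely, for all large $t$,
$$
\frac{\int_{B_t\setminus I_t}e^{-W(x)}\ud x}{\int_{B_t}e^{-W(x)}\ud x}\;=\;o\bigl((\log t)^{-c_0}\bigr).
$$
Near each $m^i_t$, the environment $W$ fluctuates like a Brownian motion, so the integral $\int_{|x-m^i_t|>(\log_2 t)^{4+\epsilon}} e^{-W(x)}\ud x$, restricted to the valley around $m^i_t$, is dominated by $\exp(-c(\log_2 t)^{2+\epsilon/2})$ times the integral over the whole valley, which beats any polynomial in $\log t$. Outside the four valleys, $W$ exceeds the selected minima by at least $K\log_3 t$, and by Brownian scaling the corresponding integrals contribute at most $(\log t)^{-K+o(1)}$; choosing $K>c_0$ suffices.

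The main obstacle is the uniformity in $t$: the almost sure statement must hold for \emph{all} large $t$, not on a deterministic sequence. I would discretize along $t_n=\exp(n^{\alpha})$ for some $\alpha\in(0,1)$, establish the above probability bound at each $t_n$ by Borel--Cantelli using the Brownian/Ray--Knight estimates, and then interpolate using the monotonicity of $\nu_t$ in $t$ together with a uniform control over $n$ of the increment $\nu_{t_{n+1}}(\R\setminus I_{t_n})-\nu_{t_n}(\R\setminus I_{t_n})$. A delicate point is that the four valleys selected for $t_n$ must remain effective throughout $[t_n,t_{n+1}]$; this is ensured by slightly inflating the depth threshold within each interval, so that new valleys become relevant only at discrete scales covered by the Borel--Cantelli argument.
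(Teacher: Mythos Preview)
Your overall architecture matches the paper's: four valley bottoms (two on each side of the origin, to cover the current dominant valley and its successor), Ray--Knight control of the local time, Borel--Cantelli along a subsequence of times, and monotonicity of $t\mapsto\nu_t(A)$ to interpolate. The paper carries this out concretely by defining the points $m^-_{v,1},m^+_v,\widehat m^-_{v,1},\widehat m^+_v$ via depth thresholds $v\pm c_i\log v$ with $v=\log t$, introducing a ``good environment'' event $\Gamma_v$ with $\Pp(\overline\Gamma_v)\le K(\log v/v)^2$, and proving (Propositions~\ref{LT}--\ref{occdet}) a quantitative Ray--Knight bound at the random times $\tla(re^v,m)$ before passing to deterministic time; the subsequence is simply $t_n=e^n$.

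There is, however, a genuine gap in your quantitative step. You take the depth threshold to be $\log t-K\log_3 t$ and then assert that outside the selected valleys ``$W$ exceeds the selected minima by at least $K\log_3 t$, and \dots the corresponding integrals contribute at most $(\log t)^{-K+o(1)}$''. But an excess of $K\log_3 t$ in the potential gives a speed-measure factor $e^{-K\log_3 t}=(\log_2 t)^{-K}$, not $(\log t)^{-K}$; this cannot beat $(\log t)^{-c_0}$ for any $c_0>0$. The correct scale, used throughout the paper, is $\log t-K\log_2 t$ (i.e.\ $v-c\log v$): then the off-valley excess is of order $\log_2 t$ and the contribution is $(\log t)^{-K}$, so choosing $K>c_0$ works. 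With only a $\log_3 t$ margin one also loses the summability needed for Borel--Cantelli on the ``good environment'' event (the paper's bound $(\log v/v)^2$ comes precisely from comparing the two thresholds $v\pm c\log v$). Relatedly, your displayed inequality $\nu_t(\R\setminus I_t)\le t\cdot(\int_{B_t\setminus I_t}e^{-W})/(\int_{B_t}e^{-W})+\text{error}$ is only a heuristic: away from the current bottom the local time is \emph{not} proportional to $e^{-W(x)}$ on the whole box $B_t$, and making this precise is exactly the content of the events $\mathcal A^i_v,\mathcal B^i_v,\mathcal C_v,\mathcal D_v$ in Proposition~\ref{LT}. Fixing the threshold to $\log_2 t$ and replacing the ratio heuristic by that decomposition would bring your sketch in line with the paper's proof.
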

That is to say, for large $t$ the diffusion  has spent much of its time in the neighborhood of at most four points.
%

The rest of the paper is organized  as follows: in Section \ref{esttec}, we present some technical tools used in the article, in Section \ref{estenv}, we show that environment $W$ has ``good'' properties with a large enough probability, Section \ref{briques} is centered on the study of the local time of the process $\X$ at properly chosen random times, then in Section \ref{supLX}, we look at the behavior of these random times to deduce results in deterministic time and finally in Section \ref{demth}, precise almost sure asymptotic result on the environment are given to finish the proofs of Theorems \ref{thcentral} and \ref{thmloc}.

\section{Three useful theorems and some technical estimates}\label{esttec}
We begin with a useful representation of $\X$ we use throughout the article. The martingale representation theorem says that, given the environment $W$, $\X$ is a Brownian motion rescaled in time and space. Precisely (see \cite{Brox}), there is a Brownian motion $B$ started at $0$, independent of $W$ such that if we define the scale function
\begin{equation}
 	\forall x \in \mathbb{R}, S_W(x):=\int_0^xe^{W(y)}\ud y \label{eqS}
\end{equation}
and the random time change
\begin{equation}
\forall t\geq 0, T_{W,B}(t):=\int_0^te^{-2W(S_W^{-1}(B(s)))}\ud s,\label{eqT}
\end{equation}
then 
\begin{equation}\label{eqx}
\X=S_W^{-1}\circ B\circ T_{W,B}^{-1}.
\end{equation}
To simplify notations, we write $S$ and $T$ for respectively $S_W$ and $T_{W,B}$. Using $(\ref{eqx})$, we easily obtain that a continuous function $\LX$ verifies \eqref{tpsocc} only if 
\begin{align}
	\forall x\in\R,\ \forall t\geq0,\ \LX(t,x)=e^{-W(x)}\LB(T^{-1}(t),S(x))\label{eqLx} 
\end{align}
where $\LB$ is the local time process of the Brownian motion $B$. And so the local time of $\X$ is defined correctly. 

We introduce some notations : for $x,r\geq0$, we denote
\begin{align*}
\tX(x):=\inf\{t\geq0\ ;\ \mathbf{X}(t)=x\}&,\ \tla(r,x):=\inf\{t\geq0\ ;\ \LX(t,x)\geq r\},\\
\tB(x):=\inf\{t\geq0\ ;\ B(t)=x\}\ \ &\text{and }\ \sigma(r,x):=\inf\{t\geq0\ ;\ L(t,x)\geq r\}.
\end{align*}
More generally, the quantities related to $\mathbf{X}$ are written in bold font and the ones related to $B$ in normal font. Furthermore, in the rest of the paper, letter $K$ stands for a universal constant whose value can change from one line to another and for any process $M$, $\tau_M(x)$ will denote the hitting time of height $x$ by $M$. We also denote by $\Pp$ the total probability and by $\PW$ the probability given the environment $W$.

As said before, the study of the process $\X$ is reduced by a change in time and space to the study of a Brownian motion. We therefore use in our proofs the following two theorems (see e.g. \cite{RevYor}) that describe the law of the local time of a Brownian motion stopped at some properly chosen random times:

\begin{theoreme}[Ray-Knight]\label{RK1}
Let $a$ be a positive real number.\\
The process $(L(\tau(a),a-y),\ y\in[0,a])$ is a square of a $2$-dimensional Bessel process started at $0$. And conditionally on $L(\tau(a),0)$, $(L(\tau(a),-y),\ y\geq0)$ is a square of a $0$-dimensional Bessel process started at  $L(\tau(a),0)$ independent of $(L(\tau(a),y),\ y\geq0)$.
\end{theoreme}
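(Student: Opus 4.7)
The plan is to deduce both parts of the theorem from Tanaka's formula combined with the occupation times formula, obtaining squared Bessel SDEs for the spatial local-time profiles; the conditional independence in the second part then follows from It\^o's excursion theory at level $0$.

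For the first assertion, I would fix $y \in [0,a]$ and apply Tanaka's formula to $(B_t - (a-y))^+$. Since $B_0 = 0$ and $B_{\tau(a)} = a$, evaluation at $t = \tau(a)$ gives
$$y = \int_0^{\tau(a)} \mathbf{1}_{\{B_s > a - y\}}\, dB_s + \tfrac{1}{2} L(\tau(a), a-y).$$
Setting $Z_y := L(\tau(a), a-y)$ and $M_y := -\int_0^{\tau(a)} \mathbf{1}_{\{B_s > a-y\}}\, dB_s$, we obtain $Z_y = 2y + 2 M_y$ with $Z_0 = L(\tau(a), a) = 0$. The occupation times formula yields for the (spatial) quadratic variation of the continuous martingale $y \mapsto M_y$
$$\langle M \rangle_y = \int_0^{\tau(a)} \mathbf{1}_{\{a-y < B_s \leq a\}}\, ds = \int_0^y Z_u\, du,$$
so Dambis--Dubins--Schwarz furnishes a standard Brownian motion $\beta$ with $M_y = \int_0^y \sqrt{Z_u}\, d\beta_u$. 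Hence $dZ_y = 2\, dy + 2\sqrt{Z_y}\, d\beta_y$, $Z_0 = 0$, which is the SDE characterizing a squared $2$-dimensional Bessel process started at $0$.

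The same Tanaka computation applied at the negative level $-y$, $y \geq 0$, using that $(B_0 - (-y))^- = (B_{\tau(a)} - (-y))^- = 0$, yields
$$L(\tau(a), -y) = L(\tau(a), 0) - 2 \int_0^{\tau(a)} \mathbf{1}_{\{-y < B_s \leq 0\}}\, dB_s,$$
and the analogous DDS argument produces an SDE $dZ'_y = 2\sqrt{Z'_y}\, d\tilde\beta_y$, $Z'_0 = L(\tau(a), 0)$, defining a squared $0$-dimensional Bessel process started at $L(\tau(a),0)$. For the conditional independence, I would invoke It\^o's excursion theory at level $0$: conditionally on $L(\tau(a), 0) = \ell$, the excursions of $B$ above $0$ and those below $0$ before $\tau(a)$ form two independent Poisson point processes of It\^o excursions, and since $(Z_y)_{y \in [0,a]}$ is a functional only of the excursions above $0$ (the level $a-y$ being nonnegative) while $(L(\tau(a),-y))_{y \geq 0}$ depends only on those below $0$, the two processes are conditionally independent given $\ell$.

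The delicate step is the rigorous treatment of $y \mapsto M_y$ (and of its negative-level analogue) as a continuous martingale in the spatial variable with the claimed bracket: one must specify an appropriate spatial filtration generated by $\{L(\tau(a), a-u) : u \leq y\}$, combine a stochastic Fubini argument with a Kolmogorov-type continuity estimate to select a jointly continuous modification of $(y,t) \mapsto \int_0^{t} \mathbf{1}_{\{B_s > a-y\}}\, dB_s$, and only then apply DDS. Once these measurability issues are handled, the identification with the squared Bessel SDEs and the conditional independence statement are routine.
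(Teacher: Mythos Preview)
The paper does not prove this theorem; it is stated as a classical result with a reference to Revuz--Yor. Your Tanaka-formula/DDS derivation is the standard textbook argument (essentially the one in Revuz--Yor, Chapter~XI), and your sketch is correct, including the honest flag that the spatial-martingale step for $y\mapsto M_y$ requires care. Since there is no proof in the paper to compare against, there is nothing further to say.
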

\begin{theoreme}[Ray-Knight]\label{RK2}
Let $r$ be a positive real number.\\
The processes $(L(\sigma(r,0),y),\ y\in\R^+)$ and $(L(\sigma(r,0),-y),\ y\in\R^+)$ are two independent squares of $0$-dimensional Bessel processes started at $r$.
\end{theoreme}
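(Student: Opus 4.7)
The plan is to establish two things separately: (i) the independence of the two processes $(L(\sigma(r,0),y))_{y\geq 0}$ and $(L(\sigma(r,0),-y))_{y\geq 0}$, and (ii) the identification of each as a squared $0$-dimensional Bessel process started at $r$. Both rest on It\^o's excursion theory for $B$ at level $0$, supplemented by Tanaka's formula for the identification step.

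For (i), I would use the decomposition of $B$ into excursions away from $0$. By It\^o's theorem the excursions, indexed by the local time $L(\cdot,0)$, form a Poisson point process on excursion space with intensity $\ud s\otimes n(\ud e)$, where $n$ is the It\^o excursion measure. Each excursion lies entirely above or entirely below $0$, so $n$ splits as $n^+ + n^-$ with orthogonal supports; consequently the positive and negative excursion clouds are independent Poisson point processes. Now $\sigma(r,0)$ is the inverse local time at $0$ evaluated at $r$, so the excursions contributing to the path up to $\sigma(r,0)$ are exactly those with local-time label in $[0,r]$. Since $L(\sigma(r,0),y)$ for $y>0$ is the sum over positive excursions of their local times at height $y$, it is a functional of the positive excursion cloud only; analogously for $y<0$. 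Independence of the two processes follows.

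For (ii), by symmetry it suffices to treat the right-hand side. Applying Tanaka's formula to $(B_t-y)^+$ for fixed $y>0$, and using $B_0=B_{\sigma(r,0)}=0$, yields
\begin{displaymath}
L(\sigma(r,0),y) \;=\; -2\int_0^{\sigma(r,0)}\mathbf{1}_{\{B_s>y\}}\,\ud B_s.
\end{displaymath}
Viewing the right-hand side as a process in the parameter $y\geq 0$, with boundary value $L(\sigma(r,0),0)=r$ by definition of $\sigma$, the occupation times formula identifies its quadratic variation in $y$ as $4L(\sigma(r,0),y)\,\ud y$. Hence $Z_y:=L(\sigma(r,0),y)$ solves
\begin{displaymath}
Z_y \;=\; r + 2\int_0^y \sqrt{Z_u}\,\ud\gamma_u
\end{displaymath}
for some Brownian motion $\gamma$, which is precisely the SDE characterizing the squared $0$-dimensional Bessel process from $r$. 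Uniqueness in law for this SDE then pins down the distribution.

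The main technical obstacle is the change of parameter in the stochastic integral: $\int_0^{\sigma(r,0)}\mathbf{1}_{\{B_s>y\}}\,\ud B_s$ is naturally indexed by $t$, and one must justify that as a function of $y$ it is a continuous martingale with respect to a suitable filtration (for instance, the filtration generated by the excursions straddling level $y$) with the claimed bracket. An alternative that sidesteps this bi-parameter issue is to compute the Laplace functional $\Pp[\exp(-\int_0^\infty f(y)L(\sigma(r,0),y)\,\ud y)]$ directly, using the exponential formula for Poisson point processes together with the disintegration of $n$ along excursion heights, and to match it with the known Laplace functional of a squared $0$-dimensional Bessel process; this is essentially the route taken in Revuz--Yor.
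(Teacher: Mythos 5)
The paper itself does not prove this statement: Theorem \ref{RK2} is one of the ``three useful theorems'' quoted from the literature (the text points to \cite{RevYor}) and is then used as a black box, so there is no internal proof to compare yours against. What can be judged is whether your sketch would stand alone. Part (i) is sound and complete in outline: the positive and negative excursion clouds are independent Poisson point processes, the excursions contributing up to $\sigma(r,0)$ are exactly those with local-time label at most $r$, and $L(\sigma(r,0),y)$ for $y>0$ is a functional of the positive cloud alone; this is the standard argument for the independence assertion. Your Tanaka identity $L(\sigma(r,0),y)=-2\int_0^{\sigma(r,0)}\mathbf{1}_{\{B_s>y\}}\,\ud B_s$ for $y>0$, the boundary value $L(\sigma(r,0),0)=r$, and the claimed bracket $4\int_0^yL(\sigma(r,0),u)\,\ud u$ are all correct.

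However, the step you flag as ``the main technical obstacle'' is not a technicality; it is the heart of the theorem. To make sense of a quadratic variation in the space variable you must first exhibit a filtration $(\mathcal{E}_y)_{y\geq 0}$ (say, generated by the excursion paths truncated at level $y$) in which $y\mapsto\int_0^{\sigma(r,0)}\mathbf{1}_{\{B_s>y\}}\,\ud B_s$ is a continuous local martingale; this requires a genuine argument (strong Markov property at level $y$, an optional-projection or enlargement argument, plus an integrability check --- note $\sigma(r,0)$ has infinite mean, though $\mathbf{E}[L(\sigma(r,0),y)]=r$ saves the day), and the occupation-times formula alone does not supply it. The fallback you propose, computing the Laplace functional $\mathbf{E}\bigl[\exp\bigl(-\int_0^\infty f(y)L(\sigma(r,0),y)\,\ud y\bigr)\bigr]$ by the exponential formula, is indeed the standard rigorous route, but there the real work is the evaluation of the It\^o-measure integral of $1-\exp\bigl(-\int_0^\infty f(y)\,\ell^y(e)\,\ud y\bigr)$ (with $\ell^y(e)$ the local time of the excursion $e$ at height $y$), which amounts to solving a Sturm--Liouville/Riccati equation and matching it with the known Laplace transform of the squared $0$-dimensional Bessel process; you name this computation but do not carry it out. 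In short: as a citation-level justification --- which is all the paper itself provides --- your outline is adequate and follows the classical proofs; as a self-contained proof it has a concrete gap at exactly the point you identified, and one of the two routes must actually be completed.
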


In the rest of the article, we denote by $Z$ a square of a $0$-dimensional Bessel process started at $1$ and by $Q$ a square of a $2$-dimensional Bessel process started at $0$. To use the previous theorems, we have to estimate the behavior of these two processes.
\begin{lemme}\label{tal}
For all $v,\delta,M>0$, we have
\begin{enumerate}[(i)]
\item $\displaystyle \Pp\left(\sup_{0\leq t\leq v} |Z(t)-1|\geq\delta\right)\leq 4\frac{\sqrt{(1+\delta)v}}{\delta}\exp\left(-\frac{\delta^2}{8(1+\delta)v}\right)$,\label{tal1}
\item $\displaystyle \Pp\left(\sup_{t\geq0} Z(t)\geq M\right)=\frac{1}{M}$,\label{tal3}
\item $\displaystyle \Pp\left(\sup_{0\leq t\leq v} Q(t)\geq M\right)\leq 4e^{-\frac{M}{2v}}$,\label{tal4}
\item There is a constant $K>0$ such that for any $0<a<b$, 
$$\Pp\left(\sup_{a\leq t\leq b} \frac{1}{t}Q(t)\geq M\right)\leq Ke^{-\frac{M}{2\log(8b/a)}}.
$$\label{tal5}
\end{enumerate}
\end{lemme}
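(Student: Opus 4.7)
The four claims all reduce to standard estimates for squared Bessel processes via their SDEs ($dZ_t = 2\sqrt{Z_t}\,d\beta_t$ for $Z$, a BESQ$(0)$ from $1$; and $dQ_t = 2\,dt + 2\sqrt{Q_t}\,dW_t$ for $Q$, a BESQ$(2)$ from $0$), combined with Dambis--Dubins--Schwarz, the optional stopping theorem, a well-chosen exponential martingale, Doob's maximal inequality, and a dyadic decomposition. I would treat the four items in the natural order, using (iii) to prove (iv).

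For (i), apply DDS to write $Z_t - 1 = \beta(\langle Z\rangle_t)$ for a Brownian motion $\beta$, where $\langle Z\rangle_t = 4\int_0^t Z_s\,ds$. Setting $\tau := \inf\{t : |Z_t - 1|\ge\delta\}$, on $\{\tau\le v\}$ one has $Z_s \le 1+\delta$ for $s\le\tau$, so $\langle Z\rangle_\tau \le 4(1+\delta)v$ while $|\beta(\langle Z\rangle_\tau)| = \delta$. This yields
$$
\Pp\!\left(\sup_{t\le v}|Z_t - 1|\ge\delta\right)\le \Pp\!\left(\sup_{s\le 4(1+\delta)v}|\beta_s|\ge\delta\right),
$$
and the standard reflection/Mills-ratio bound $\Pp(\sup_{s\le T}|\beta_s|\ge\delta)\le \frac{4\sqrt{T}}{\delta\sqrt{2\pi}}e^{-\delta^2/(2T)}$ gives exactly the constant $4$. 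For (ii), $Z$ is a non-negative continuous martingale absorbed at $0$ with $Z_\infty = 0$ almost surely, so $\{\sup_t Z_t\ge M\} = \{\tau_M^Z < \tau_0^Z\}$; optional stopping at $\tau_M^Z \wedge \tau_0^Z$ yields $1 = E[Z_{\tau_M^Z\wedge\tau_0^Z}] = M\,\Pp(\tau_M^Z < \tau_0^Z)$.

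For (iii), I would use the exponential martingale
$$
N_t^{(\lambda)} := \frac{1}{1+2\lambda t}\exp\!\left(\frac{\lambda Q_t}{1+2\lambda t}\right),\qquad \lambda>0,
$$
which is a non-negative continuous martingale with $N_0^{(\lambda)} = 1$ (by an Itô computation, or equivalently the identity $E[e^{\mu Q_t}] = (1-2\mu t)^{-1}$). The map $s\mapsto (1+2\lambda s)^{-1}\exp(\lambda M/(1+2\lambda s))$ is decreasing in $s$, so on $\{\sup_{t\le v}Q_t\ge M\}$, at the hitting time $\tau\le v$ of level $M$,
$$
\sup_{t\le v}N_t^{(\lambda)}\ge N_\tau^{(\lambda)} \ge \frac{1}{1+2\lambda v}\exp\!\left(\frac{\lambda M}{1+2\lambda v}\right).
$$
Doob's maximal inequality for the non-negative martingale $N^{(\lambda)}$ then gives $\Pp(\sup_{t\le v}Q_t\ge M)\le (1+2\lambda v)e^{-\lambda M/(1+2\lambda v)}$, and tuning $\lambda$ so that $1+2\lambda v$ is of order $M/(2v)$ produces the exponential factor $e^{-M/(2v)}$; the polynomial prefactor is absorbed into the constant $4$ after noting that the claim is trivial in the regime $4e^{-M/(2v)}\ge 1$.

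For (iv), I dyadically partition $[a,b]$ by $t_k := 2^k a$ for $k=0,\ldots,N$ with $N := \lceil\log_2(b/a)\rceil\le \log_2(8b/a)$. If $\sup_{t\in[t_k,t_{k+1}]}Q(t)/t\ge M$, then $\sup_{t\le t_{k+1}}Q(t)\ge Mt_k = (M/2)t_{k+1}$, so (iii) bounds each piece by $4e^{-M/4}$. A union bound gives
$$
\Pp\!\left(\sup_{a\le t\le b}\frac{Q(t)}{t}\ge M\right)\le 4(N+1)e^{-M/4}\le K'\log(8b/a)\,e^{-M/4},
$$
which is then bounded by $Ke^{-M/(2\log(8b/a))}$ by splitting into two regimes: when $M\lesssim \log(8b/a)\cdot\log\log(8b/a)$ the right-hand side already exceeds $1$ for $K$ large enough, and otherwise the exponent $M/4$ dominates $M/(2\log(8b/a)) + \log\log(8b/a)$ so the polylog prefactor is swallowed. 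The main obstacle is (iii): the exponential martingale is the right object, but tracking the constants through the optimization over $\lambda$ is delicate because the raw optimum produces a prefactor of order $M/v$ that must be absorbed into the stated constant $4$; once (iii) is in hand, the dyadic argument of (iv) is routine, the only subtlety being the loss of a polylogarithmic factor that forces the weaker exponent $M/(2\log(8b/a))$ in place of $M/4$.
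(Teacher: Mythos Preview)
Your arguments for (i) and (ii) are correct and self-contained; the paper simply cites \cite{Talet} for these two items, so here you actually supply more than the paper does.

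For (iii) there is a genuine gap. Optimising your bound $(1+2\lambda v)\exp\bigl(-\lambda M/(1+2\lambda v)\bigr)$ over $\lambda>0$ gives, at $1+2\lambda v=M/(2v)$, the value $\dfrac{eM}{2v}\,e^{-M/(2v)}$, and this \emph{exceeds} $4e^{-M/(2v)}$ whenever $M/(2v)>4/e\approx1.47$. Your ``trivial regime'' $4e^{-M/(2v)}\ge1$ only covers $M/(2v)\le\log4\approx1.39$, so the polynomial prefactor cannot be absorbed into the constant~$4$; the exponential-martingale/Doob route yields at best a bound of the shape $C\,(M/v)\,e^{-M/(2v)}$ or $C_\epsilon\, e^{-(1-\epsilon)M/(2v)}$, not the stated inequality. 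The paper's argument for (iii) is different and sharper: it writes $Q=B^2+\tilde B^2$ with $B,\tilde B$ independent Brownian motions, bounds $\sup_{[0,v]}Q\le(\sup_{[0,v]}|B|)^2+(\sup_{[0,v]}|\tilde B|)^2$, splits $\sup|B|=\max(\sup B,-\inf B)$ to pick up a factor $4$ by a union bound over the four sign choices, uses the reflection identity $\sup_{[0,v]}B\stackrel{\mathcal{L}}{=}|B(v)|$, and concludes via $\Pp\bigl(B(v)^2+\tilde B(v)^2\ge M\bigr)=\Pp(Q(v)\ge M)=e^{-M/(2v)}$ since $Q(v)$ is exponential with mean $2v$.

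For (iv) the paper takes another route: by the scaling of $Q$ it reduces the statement to $\Pp\bigl(\sup_{0<t\le1}\sqrt{Q(t)/(t\log(8/t))}\ge x\bigr)\le Ke^{-x^2/2}$, a lemma quoted from \cite{HuShi2}. Your dyadic decomposition is a legitimate alternative and, since only an unspecified constant $K$ is claimed in (iv), it survives even with the weaker version of (iii) that your method does establish. One correction, though: the region where $Ke^{-M/(2\log(8b/a))}\ge1$ is $M\le 2\log(8b/a)\cdot\log K$, not $M$ of order $\log(8b/a)\cdot\log\log(8b/a)$ as you wrote; with the correct case split the absorption of the $\log(8b/a)$ prefactor into $K$ does go through.
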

\begin{proof}
$(\ref{tal1})$ and $(\ref{tal3})$ are proved in \cite{Talet} Lemma 3.1 (the results are stated for a Bessel process but they are actually true for a \emph{squared} Bessel process).
\begin{enumerate}
\item[$(\ref{tal4})$] Denote by $B$ and $\tilde{B}$ two independent Brownian motions. The processes $Q$ and $B^2+\tilde{B}^2$ have the same law, so
\begin{align*}
 \Pp\left(\sup_{0\leq t\leq v} Q(t)\geq M\right)&\leq \Pp\left(\left(\sup_{0\leq t\leq v} |B|(t)\right)^2+\left(\sup_{0\leq t\leq v} |\tilde{B}|(t)\right)^2\geq M\right)\\
&\leq 4\Pp\left(\left(\sup_{0\leq t\leq v}B(t)\right)^2+\left(\sup_{0\leq t\leq v}\tilde{B}(t)\right)^2\geq M\right)
\end{align*}
where the second inequality comes from the reflection principle. For a fixed $v$, $\sup_{0\leq s\leq v}B(s)\egloi|B(v)|$, then 
\begin{align*}
\Pp\left(\sup_{0\leq t\leq v} Q(t)\geq M\right)&\leq4\Pp\left(Q(v)\geq M\right)=4e^{-\frac{M}{2v}},
\end{align*}
as $Q(v)$ has exponential distribution of mean $2v$.
\item[$(\ref{tal5})$] Thanks to the scaling property of $Q$,
\begin{align*}
\Pp\left(\sup_{a\leq t\leq b} \frac{1}{t}Q(t)\geq M\right)&=\Pp\left(\sup_{a/b\leq t\leq 1} \frac{1}{t}Q(t)\geq M\right)\\
&\leq \Pp\left(\sup_{0\leq t\leq 1} \sqrt{\frac{Q(t)}{t\log(8/t)}}\geq \sqrt{\frac{M}{\log(8b/a)}}\right).
\end{align*}
We then conclude with Lemma $6.1$ in \cite{HuShi2} which says that there exist a constant $K>0$ such that for all $x>0$,
$$
 \Pp\left(\sup_{0\leq t\leq 1} \sqrt{\frac{Q(t)}{t\log(8/t)}}\geq x\right)\leq Ke^{-\frac{x^2}{2}}.
$$
\end{enumerate}
\end{proof}

We also need to study the behavior of the environment $W$. We start with a notation for the minimum of $W$ on an interval
 $$\Wu(x,y):=\left\{\begin{array}{lc}
              \min_{z\in[x,y]}W(z)&\textrm{if }x\leq y\\
	      +\infty&\textrm{if not}
             \end{array}\right.,
$$
another one for the maximum
$$\Wb(x,y):=\left\{\begin{array}{lc}
              \max_{z\in[x,y]}W(z)&\textrm{if }x\leq y\\
	      -\infty&\textrm{if not}
             \end{array}\right.$$
and a last one for the environment reversed in time 
$$\left(\hW(x),\ x\in\R\right):=\left(W(-x),\ x\in\R\right).$$
Define  now
\begin{align}
H_{v}&:=\inf\{x\geq0\ ;\ W(x)-\Wu(0,x)\geq{v}\},\\
m_{v}&:=\inf\{x\geq0\ ;\ W(x)=\Wu(0,H_{v})\}\label{defmv}
\end{align}
and $\widehat{H}_{v}$ and $\widehat{m}_v$ the corresponding points for $\hW$. Brox showed in \cite{Brox} that at time $e^{v}$, with high probability, the process $\X$ has spent much of its time in the neighborhood of $m_{v}$ or of $\widehat{m}_v$. For our study, we need to know the law of the environment in the neighborhood of these points; it is given by the following theorem due to Tanaka (Lemma 3.1 in \cite{Tanaka}, see also the proposition page 164 in \cite{Tanaka3}).
\begin{theoreme}\label{thtanaka}
Let $R$ be a Bessel process of dimension $3$ started at $0$ and define
\begin{align*}
 \tau_R({v})&:=\inf\{x\geq0/R(x)\geq{v}\},\\
\zeta_R({v})&:=\inf\{x\geq0/R(x)-\inf_{y\geq x}R(y)\geq{v}\}\textrm{ and}\\
\rho_R({v})&:=\sup\{x\leq\zeta_R({v})/R(x)-\inf_{y\geq x}R(y)=0\}.
\end{align*}
Under $\Pmil$, the process $\left(W(-x+m_{v})-W(m_{v}),\ x\in[0,m_{v}]\right)$ and the process $\left(W(x+m_{v})-W(m_{v}),\ x\in[0,H_{v}-m_{v}]\right)$ are independent and the following equalities in law hold:
 $$\big(W(-x+m_{v})-W(m_{v}),\ x\in[0,m_{v}]\big)\egloi\big(R(x),\ x\in[0,\rho_R({v})]\big)$$ 
and
$$\big(W(x+m_{v})-W(m_{v}),\ x\in[0,H_{v}-m_{v}]\big)\egloi\big(R(x),\ x\in[0,\tau_R({v})]\big).$$
\end{theoreme}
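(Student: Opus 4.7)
The plan is to split the theorem into three ingredients: conditional independence of the two pieces, identification of the right-hand marginal, and identification of the time-reversed left-hand marginal.

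First I would establish that, given the triple $(m_{v}, W(m_{v}), H_{v})$, the trajectories $(W(x)-W(m_{v}))_{0 \leq x \leq m_{v}}$ and $(W(m_{v}+x)-W(m_{v}))_{0 \leq x \leq H_{v}-m_{v}}$ are independent. Although $m_{v}$ is not a stopping time for $W$, once one conditions on $H_{v}$ and on the associated endpoint data, the strong Markov property at $H_{v}$ combined with a Brownian bridge decomposition (applied forward on the right piece and via time reversal on the left piece) disintegrates the joint law into a product. The two pieces then carry disjoint path constraints and may be analyzed separately.

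For the right piece, set $Y(x) := W(x+m_{v})-W(m_{v})$. By the strong Markov property at $m_{v}$, this is a Brownian motion started at $0$; it stays non-negative on $[0,H_{v}-m_{v}]$ (by the definition of $m_{v}$ as the minimum on $[0,H_{v}]$) and first reaches ${v}$ at time $H_{v}-m_{v}$ (since $W(H_{v})-\Wu(0,H_{v})={v}$). A Brownian motion conditioned to stay non-negative until its first hitting of ${v}$ is, by the classical Williams/Imhof theorem, distributed exactly as a $3$-dimensional Bessel process $R$ run up to $\tau_R({v})$. For the left piece, the time-reversed process $Z(s) := W(-s+m_{v})-W(m_{v})$, $s\in[0,m_{v}]$, starts at $0$, remains non-negative, terminates at $Z(m_{v})=-W(m_{v})$, and satisfies the dual drawdown constraint that for every $s<m_{v}$, $Z(s)-\inf_{s\leq u\leq m_{v}} Z(u) <{v}$. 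This is precisely the profile of $R$ observed up to $\rho_R({v})$: by definition, $\rho_R({v})$ is the last instant before the drawdown of $R$ first reaches ${v}$ at which $R$ is at its future infimum, so on $[0,\rho_R({v})]$ the drawdown of $R$ stays strictly below ${v}$ and the terminal value $R(\rho_R({v}))$ equals $\inf_{y\geq \rho_R({v})} R(y)$.

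The hard step is the last one. While the post-$m_{v}$ identity reduces to a single classical theorem, the pre-$m_{v}$ identity requires a path decomposition of $R$ genuinely dual to the valley structure of $W$. To complete it I would follow Tanaka's pathwise approach: build both sides from the same excursion-theoretic construction of the three-dimensional Bessel process and identify the increments, or equivalently, verify the two laws coincide by computing the joint Laplace transforms of the processes $(Z(\cdot),m_{v},-W(m_{v}))$ and $(R(\cdot),\rho_R({v}),R(\rho_R({v})))$ on a dense family of functionals. The matching of the stopping rule $\rho_R({v})$ with the geometric constraint inherited from the definition of $H_{v}$ is what carries all the content.
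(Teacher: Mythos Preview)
The paper does not give its own proof of this statement: it is quoted as a result of Tanaka (Lemma~3.1 in \cite{Tanaka}, see also the proposition on page~164 of \cite{Tanaka3}), so there is nothing to compare your argument against here.

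On the merits of your sketch, the overall architecture (split at $m_{v}$, identify each piece with a Bessel$(3)$ trajectory) is the right one, but there is a real gap in the execution. You correctly observe in your first paragraph that $m_{v}$ is not a stopping time for $W$, yet in the second paragraph you invoke ``the strong Markov property at $m_{v}$'' to conclude that $Y(x)=W(x+m_{v})-W(m_{v})$ is a Brownian motion started at $0$. It is not: the post-$m_{v}$ increment is a Brownian path \emph{conditioned} to stay non-negative until it first reaches~$v$, and this conditioning is precisely what produces the Bessel$(3)$ law. Both the independence of the two pieces and the right-hand identification come out of a single tool---Williams' path decomposition of Brownian motion at its minimum on $[0,H_{v}]$ (or the equivalent Denisov/Imhof statements)---not from the strong Markov property. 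Your ``Brownian bridge decomposition'' in the first paragraph is too vague to substitute for this.

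For the left piece you identify the correct constraint (the time-reversed drawdown stays below $v$ and the endpoint is a future minimum), but then defer the actual proof to ``Tanaka's pathwise approach'' or to matching Laplace transforms on a dense family of functionals. This is the substantive half of the theorem, and neither alternative is carried out; as written, the proposal is an outline of what one would have to show rather than a proof.
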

Therefore, to use this theorem it is necessary to have informations on the behavior of Bessel processes of dimension $3$.
\begin{lemme}\label{lemboro}
Let $R$ be a $3$-dimensional Bessel process started in $0$. There is a positive real number $K$ such that for every $a,x>0$ 
,\begin{enumerate}[(i)]
  \item \label{lembo1}$\displaystyle
\frac{a}{\sqrt{x}}e^{-a^2/2x}\leq \Pp\left(\sup_{[0,x]}R>a\right) 
\leq K\left(\frac{a}{\sqrt{x}}+\frac{\sqrt{x}}{a}\right)e^{-a^2/2x}.
$
\item \label{lembo2}$\displaystyle \Pp\left(\sup_{[0,x]}R<a\right)\geq\frac{1}{K}e^{-\pi^2x/(2a^2)}$\\
\item \label{lembo3}$\displaystyle \Pp\left(\int_0^\infty e^{-R(x)}\ud x> a\right)\leq Ke^{-j_0^2a/8}$ where $j_{0}$ is the smallest strictly positive root of Bessel function $J_0$.
 \end{enumerate}
\end{lemme}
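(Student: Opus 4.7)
For part (i), I use the representation $R \egloi |B|$ where $B$ is a three-dimensional Brownian motion, so that $R(x)$ has the explicit density $\sqrt{2/\pi}\,r^2 x^{-3/2} e^{-r^2/(2x)}$. For the upper bound, the strong Markov property at $\tau_a := \inf\{s : R(s) = a\}$ yields $\Pp(R(x) > a) = \mathbb{E}[\mathbf{1}_{\tau_a \leq x}\,h(x - \tau_a)]$ with $h(s) := \Pp(|a\mathbf{e}_1 + B(s)| > a)$. A direct coordinate calculation shows $h \geq 1/2$, hence $\Pp(\tau_a \leq x) \leq 2\,\Pp(R(x) > a)$; combined with the integration by parts $\int_c^\infty u^2 e^{-u^2/2}\,\ud u = c\,e^{-c^2/2} + \int_c^\infty e^{-u^2/2}\,\ud u \leq (c + c^{-1})\,e^{-c^2/2}$, this produces the claimed upper bound in terms of $c = a/\sqrt{x}$. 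For the lower bound I split on $c$: when $c$ is below a suitable $c_0$, the naive $\Pp(\sup R > a) \geq \Pp(R(x) > a) \geq c\,e^{-c^2/2}$ follows from Mills ratio; when $c \geq c_0$ the spectral formula of part (ii) combined with the Jacobi/Poisson summation identity gives the sharp asymptotic $\Pp(\sup_{[0,x]} R \geq a) = 2\sqrt{2/\pi}\,c\,e^{-c^2/2}(1 + o(1))$, and the factor $2\sqrt{2/\pi} > 1$ suffices.

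For part (ii), I use the spectral decomposition of the Dirichlet Laplacian on the three-dimensional ball of radius $a$. The radial eigenfunctions are $\sin(n\pi r/a)/r$ with eigenvalues $n^2\pi^2/a^2$, and integration against the radially symmetric initial condition at the origin gives
$$\Pp\bigl(\sup\nolimits_{[0,x]} R < a\bigr) = 2\sum_{n \geq 1} (-1)^{n+1}\,e^{-n^2\pi^2 x/(2a^2)}.$$
For $x/a^2$ large the $n=1$ term dominates and the expression is asymptotically $2\,e^{-\pi^2 x/(2a^2)}$; for $x/a^2$ small the alternating series converges slowly but its Abel sum is close to $1$. In both regimes $\Pp(\sup < a)\,e^{\pi^2 x/(2a^2)}$ is bounded below by a universal positive constant, which yields the claim.

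For part (iii), set $J := \int_0^\infty e^{-R(x)}\,\ud x$ and $v(r) := \mathbb{E}^r[e^{-\lambda J}]$, $\lambda \geq 0$. Feynman-Kac gives
$$\tfrac12 v''(r) + \tfrac{1}{r}v'(r) = \lambda\,e^{-r}\,v(r),\qquad v(\infty) = 1,\ v(0) < \infty.$$
The substitutions $u(r) := r\,v(r)$ and then $s := 2\sqrt{2\lambda}\,e^{-r/2}$ turn this into the modified Bessel equation $s^2 u_{ss} + s u_s - s^2 u = 0$, whose fundamental solutions are $I_0$ and $K_0$. Matching $u(r) \sim r$ at infinity and $u(0) = 0$, and using the Wronskian $I_0 K_1 + I_1 K_0 = 1/s$, I obtain
$$\mathbb{E}\bigl[e^{-\lambda J}\bigr] = \frac{1}{I_0(2\sqrt{2\lambda})}.$$
Since $I_0(iy) = J_0(y)$, the MGF $\mathbb{E}[e^{\mu J}] = 1/J_0(2\sqrt{2\mu})$ is finite on $[0, j_0^2/8)$ and blows up at $\mu = j_0^2/8$. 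Inverting the Laplace transform by contour integration and summing residues at the simple poles $\lambda_k = -j_{0,k}^2/8$ gives the explicit tail series $\Pp(J > a) = \sum_{k \geq 1} \bigl(2/(j_{0,k} J_1(j_{0,k}))\bigr)\, e^{-j_{0,k}^2 a/8}$; for $a$ bounded away from $0$, factoring out the first exponential and using $j_{0,k} \geq j_0$ produces a convergent bound $C\,e^{-j_0^2 a/8}$, while for small $a$ the trivial $\Pp(J > a) \leq 1$ together with continuity of the tail takes over, yielding the stated uniform bound.

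The principal difficulty is part (iii): identifying the correct Feynman-Kac ODE, reducing it to the modified Bessel equation with correct boundary matching to get the clean formula $1/I_0(2\sqrt{2\lambda})$, and then converting this exact transform into a uniform tail bound with the sharp rate $j_0^2/8$, since a single Chernoff bound at any fixed $\mu < j_0^2/8$ misses the optimal rate and the residue inversion is essential.
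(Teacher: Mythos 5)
Your proof is correct, but it takes a genuinely different, self-contained route: at each step you prove what the paper simply cites. For (iii) the paper invokes Le Gall's Ray--Knight theorem to identify $\frac14\int_0^\infty e^{-R(x)}\,\ud x$ in law with $T_Q(1)$, the hitting time of $1$ by a squared $2$-dimensional Bessel process, and then quotes Ciesielski--Taylor for the tail of $T_Q(1)$; you instead derive the Laplace transform $\mathbb{E}[e^{-\lambda J}]=1/I_0(2\sqrt{2\lambda})$ directly by Feynman--Kac and recover the Ciesielski--Taylor series by residue inversion --- the two are consistent, since the BESQ(2) hitting-time transform is $1/I_0(\sqrt{2\lambda})$ and $J\stackrel{\mathcal{L}}{=}4\,T_Q(1)$, so you have in effect reproved both cited ingredients. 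Similarly for (ii) the paper quotes the Ciesielski--Taylor asymptotics for $T_R(1)$ (the rate $\pi^2/2$ coming from the first zero of $J_{1/2}$), whereas you derive the Dirichlet eigenfunction series $2\sum_{n\ge1}(-1)^{n+1}e^{-n^2\pi^2x/(2a^2)}$ and bound it by hand; note the uniform lower bound is cleanest by keeping $u_1-u_2$ for $x/a^2$ large and monotonicity in $x$ for $x/a^2$ small, rather than Abel-summation language. For (i) the paper disposes of both bounds in two lines (reflection principle for the upper bound, ``usual estimates'' plus $\Pp(\sup_{[0,x]}R>a)\ge\Pp(R(x)>a)$ for the lower), while you give the strong Markov/Gaussian computation for the upper bound and, importantly, treat the lower bound in two regimes; this care is actually needed, since $\Pp(R(x)>a)$ alone falls below $\frac{a}{\sqrt{x}}e^{-a^2/(2x)}$ when $a/\sqrt{x}$ is large, so the paper's one-line reduction does not by itself give the constant $1$. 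One small tightening of your argument: rather than relying on the asymptotic $2\sqrt{2/\pi}\,c\,e^{-c^2/2}(1+o(1))$ for $c\ge c_0$ (which leaves the intermediate range of $c$ slightly exposed between where the Mills-ratio bound stops and where the $o(1)$ is controlled), observe that Poisson summation writes $\Pp(\sup_{[0,x]}R\ge a)$ as $2\sqrt{\pi/s}\sum_{k\ge0}e^{-\pi^2(k+1/2)^2/s}$ with $s=\pi^2x/(2a^2)$, a series of positive terms, so keeping only $k=0$ gives the exact inequality $\Pp(\sup_{[0,x]}R\ge a)\ge 2\sqrt{2/\pi}\,\frac{a}{\sqrt{x}}e^{-a^2/(2x)}$ for all $a,x>0$, which settles the lower bound uniformly. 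Overall, the paper's proof is much shorter because it leans on Le Gall and Ciesielski--Taylor; yours is longer but self-contained and supplies justification the paper leaves implicit.
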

\begin{proof}
\begin{enumerate}[\it (i)]
 \item According to the reflection principle for Brownian motion, one can find a $K>0$ such that $$\Pp\left(\sup_{[0,x]}R>a\right)\leq K \Pp\left(R(x)>a\right).$$
Moreover, $\Pp\left(\sup_{[0,x]}R>a\right)\geq \Pp\left(R(x)>a\right)$. So Item $(\ref{lembo1})$ of the lemma is a consequence of usual estimates for $3$-dimensional Bessel processes.
\item Recall the Bessel function of the first kind (see \cite{AbrSte} chapter $9$) 
$$J_{1/2}(x)=\sqrt{\frac{2}{\pi x}}\sin x,
$$
its smallest positive root is $\pi$. Then, according to Theorem $2$ of \cite{CieTay}, there is a positive number $K$ such that
$$
 \Pp\left(T_R(1)\geq x\right)\sim \frac{1}{K}e^{-\pi^2x/2}.
$$
Then, (the value of $K$ can change)
$$
 \Pp\left(T_R(1)\geq x\right)\geq\frac{1}{K}e^{-\pi^2x/2}
$$
and
$$
\Pp\left(\sup_{[0,x]}R<a\right)=\Pp\left(T_R(a)\geq x\right)
\geq\frac{1}{K}e^{-\pi^2x/(2a^2)}.
$$
\item Le Gall's Ray-Knight theorem (Proposition 1.1 of \cite{LeGall}) shows that the integral $1/4\int_{0}^{\infty}e^{-R(x)}\ud x$ has the same law as $T_Q(1)$, the hitting time of height $1$ by a squared Bessel process of dimension $2$ started at $0$.
Then according to Theorem $2$ of \cite{CieTay}, as in the proof of the previous item, 
\begin{align*}
\Pp\left(\int_0^\infty e^{-R(x)}\ud x> a\right)=&\Pp\left(T_Q(1)> \frac{a}{4}\right)\leq Ke^{-j^2_{0}a/8}.
\end{align*}
\end{enumerate}
\end{proof}

\section{Estimates on the environment}\label{estenv}
The process $\hW$ has the same law as $W$, this allows to restrict the study to $W$ on $\R^+$ and to get similar results on $\R^-$ by symmetry.

We study the environment on $[0,H_{v}]$, the \emph{valley} of height $v$ and particularly in the neighborhood of $m_{v}$ as it is the place where the diffusion spends most of its time. Unfortunately, the probability that at time $e^{v}$, the process has reached the bottom $m_{v}$ and has not left the valley is not growing fast enough to derive almost sure results. So we rather study the valley of height ${v}-c_1\log{v}$, where $c_1$ is a positive real number whose value will be determined later, so that, with high probability, at time $e^{v}$, the process has reached the bottom of this valley and the valley of height ${v}+c_3\log{v}$ so that the process is still inside at time $e^{v}$. We therefore fix three constants $ c_1, c_2, c_3> 0 $ with $c_1\geq c_2$ and define recursively for any ${v}>1$, $\bpd[0]:=0$ and for $i\geq0$,
\begin{align*}
\bpd[i+1]&:=\inf\{x\geq \bmd[i]\ ;\ W(x)-\Wu(\bmd[i],x)\geq {v}-c_1\log{v}\},\\
\mpd[i+1]&:=\inf\{x\geq \bmd[i]\ ;\ W(x)=\Wu(\bmd[i],\bpd[i+1])\}.
\end{align*}

Denote also for any $i\in\N^*$,
\begin{align*}
	\amd[i]&:=\sup\{x\leq \mmd[i]\ ; \ W(x)-W(\mmd[i])\geq {v}-c_2\log{v}\}\vee\bmd[i-1],\\
	\Bpd&:=\inf\{x\geq 0\ ; \ W(x)-\Wu(0,x)\geq {v}+c_3\log{v}\},\\
	\ma&:=\inf\{x\geq0\ ; \ W(x)=\Wu(0,\Bpd)\},\\
	\Cma&:=\inf\{x\geq \ma\ ; \ W(x)-W(\ma)\geq {v}-c_1\log{v}\}\textrm{ and}\\
	\Dma&:=\sup\{x\leq \ma\ ; \ W(x)-W(\ma)\geq {v}-c_2\log{v}\}\vee0.
\end{align*}
\begin{figure}[ht]%
\begin{picture}(0,0)%
\includegraphics{brownienb.pstex}%
\end{picture}%
\setlength{\unitlength}{2072sp}%
\begingroup\makeatletter\ifx\SetFigFontNFSS\undefined%
\gdef\SetFigFontNFSS#1#2#3#4#5{%
  \reset@font\fontsize{#1}{#2pt}%
  \fontfamily{#3}\fontseries{#4}\fontshape{#5}%
  \selectfont}%
\fi\endgroup%
\begin{picture}(10713,3849)(1696,-5338)
\put(1650,-3706){\makebox(0,0)[lb]{\smash{{\SetFigFontNFSS{6}{7.2}{\rmdefault}{\mddefault}{\updefault}{\color[rgb]{0,0,0}$b_{{v},0}^-$}%
}}}}
\put(9181,-3391){\makebox(0,0)[lb]{\smash{{\SetFigFontNFSS{6}{7.2}{\rmdefault}{\mddefault}{\updefault}{\color[rgb]{0,0,0}$m_{{v},3}^-$}%
}}}}
\put(3061,-3706){\makebox(0,0)[lb]{\smash{{\SetFigFontNFSS{6}{7.2}{\rmdefault}{\mddefault}{\updefault}{\color[rgb]{0,0,0}$a_{{v},1}^-$}%
}}}}
\put(3950,-3391){\makebox(0,0)[lb]{\smash{{\SetFigFontNFSS{6}{7.2}{\rmdefault}{\mddefault}{\updefault}{\color[rgb]{0,0,0}$m_{{v},1}^-$}%
}}}}
\put(4771,-3706){\makebox(0,0)[lb]{\smash{{\SetFigFontNFSS{6}{7.2}{\rmdefault}{\mddefault}{\updefault}{\color[rgb]{0,0,0}$b_{{v},1}^-$}%
}}}}
\put(5400,-3706){\makebox(0,0)[lb]{\smash{{\SetFigFontNFSS{6}{7.2}{\rmdefault}{\mddefault}{\updefault}{\color[rgb]{0,0,0}$a_{{v},2}^-$}%
}}}}
\put(6121,-3391){\makebox(0,0)[lb]{\smash{{\SetFigFontNFSS{6}{7.2}{\rmdefault}{\mddefault}{\updefault}{\color[rgb]{0,0,0}$m_{{v},2}^-=m_{v}^+$}%
}}}}
\put(7651,-3706){\makebox(0,0)[lb]{\smash{{\SetFigFontNFSS{6}{7.2}{\rmdefault}{\mddefault}{\updefault}{\color[rgb]{0,0,0}$b_{{v},2}^-=a_{{v},3}^-$}%
}}}}
\put(10711,-3661){\makebox(0,0)[lb]{\smash{{\SetFigFontNFSS{6}{7.2}{\rmdefault}{\mddefault}{\updefault}{\color[rgb]{0,0,0}$c_{v}^+$}%
}}}}
\put(11026,-3706){\makebox(0,0)[lb]{\smash{{\SetFigFontNFSS{6}{7.2}{\rmdefault}{\mddefault}{\updefault}{\color[rgb]{0,0,0}$b_{{v},3}^-$}%
}}}}
\end{picture}%
\caption{A sample path of $W$}
\end{figure}

Obviously, there is a $i\in\N^*$ such that $\ma=\mpd[i]$. We want to prove that, with a probability large enough, $\ma\in\{\mpd[1],\ \mpd[2]\}$ and moreover that, in the valley $[0,\Bpd]$, the points after $\Cma$ are higher than $W(\ma)+(c_1+c_3)\log{v}$. This can be expressed formally as follows:
\begin{align*}
	\Gamma_{v}^{1}&:=\left\{\Bpd\leq\bpd[3]\ ;\ \Wu(\Cma,\Bpd)-W(\ma)\geq(c_1+c_3)\log{v}\right\}.
\end{align*}
We also need many more technical conditions to ensure that the environment does not stray too far from its average behavior:
\begin{align*}
	\Gamma_{v}^{2}&:=\left\{\bpd[3]\leq{v}^6\ ;\ W(\mmd)\geq-{v}^2\ ;\ W(\mpd)-W(\bmd)\geq-{v}^2\right\}\\
&\bigcap\left\{\mmd-\amd\geq\frac{1}{{v}^2}\ ;\ \mpd-\apd\geq\frac{1}{{v}^2}\right\}\\
&\bigcap\left\{\Bpd-\ma\geq{v}\ ;\ W(\Bpd)-\Wu((\Bpd-\log{v})\vee\ma,\Bpd)\leq2\log{v}\right\}\\
&\bigcap\left\{\Wb((\mmd-\log{v})\vee\amd,\mmd)-W(\mmd)\leq2\log{v}\right\}\\
&\bigcap\left\{\Wb((\mpd-\log{v})\vee\apd,\mpd)-W(\mpd)\leq2\log{v}\right\}.
\end{align*}

We would also wish sometimes that $\ma=\mpd[1]$ (which of course is obtained with a probability lower than the previous one). For that we use the event:
\begin{align*}
 	\Gamma_{v}^{3}&:=\left\{\Bpd\leq\bpd[2]\ ;\ \Wu(\Cma,\Bpd)-W(\ma)\geq(c_1+c_3)\log{v}\right\}.
\end{align*}
Define similarly $\widehat{\Gamma}_{v}^{1}$, $\widehat{\Gamma}_{v}^{2}$ and $\widehat{\Gamma}_{v}^{3}$ from $\hW$.

We will therefore work on 
\begin{equation}
  \Gamma_{v}=\genv  \label{defGv}
\end{equation}
 or on
\begin{equation}
\Gamma_{v}'=\penv. \label{defGvp}
\end{equation}
 The first step, as stated before, is to show that these events occur with a high enough probability.
 We denote for every event $A$, $\overline{A}:=\Omega\setminus A$.

\begin{proposition}\label{probGamma}  There exists a constant $K>0$ such that for ${v}$ large enough,

$$\Pmil(\overline{\Gamma}_{v})\leq K\left(\frac{\log{v}}{{v}-c_1\log{v}}\right)^2\textrm{ and }\ \Pmil(\overline{\Gamma'}_{v})\leq \frac{K\log{v}}{{v}-c_1\log{v}}.$$
\end{proposition}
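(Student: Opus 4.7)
The starting point is to collapse the product structure of $\Gamma_v=\genv$ (and $\Gamma_v'$) by symmetry and the union bound: since $\widehat{W}\egloi W$, we have $\Pmil(\overline{\widehat{\Gamma}_v^i})=\Pmil(\overline{\Gamma_v^i})$ for each $i$, so
$$\Pmil(\overline{\Gamma}_v)\le 2\bigl[\Pmil(\overline{\Gamma_v^1})+\Pmil(\overline{\Gamma_v^2})\bigr],\qquad \Pmil(\overline{\Gamma'}_v)\le 2\bigl[\Pmil(\overline{\Gamma_v^3})+\Pmil(\overline{\Gamma_v^2})\bigr].$$
Writing $\Gamma_v^1=\{\Bpd\le\bpd[3]\}\cap\{\Wu(\Cma,\Bpd)-W(\ma)\ge (c_1+c_3)\log v\}$ (and similarly for $\Gamma_v^3$ with $\bpd[2]$ in place of $\bpd[3]$), it remains to estimate each piece separately, with the target $O((\log v/(v-c_1\log v))^2)$ (resp.\ $O(\log v/(v-c_1\log v))$).

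Next I bound $\Pmil(\overline{\Gamma_v^2})$ term by term. The global size bound $\{\bpd[3]>v^6\}$ follows from Brownian scaling together with $E[H_{v-c_1\log v}]=(v-c_1\log v)^2<\infty$, so Markov's inequality gives $O(v^{-4})$. The depth bounds $\{W(\mmd)<-v^2\}$ and $\{W(\mpd)-W(\bmd)<-v^2\}$ rely on the exponential-tail law of $-\Wu(0,H_{v-c_1\log v})$ (obtained via Levy's identification of the reflected Brownian motion with its local time at $0$), giving $\exp(-\Omega(v))$. The descent length bounds $\{\mmd-\amd\ge 1/v^2\}$, the separation $\{\Bpd-\ma\ge v\}$, the upper oscillation bound $\{W(\Bpd)-\Wu((\Bpd-\log v)\vee\ma,\Bpd)\le 2\log v\}$, and the symmetric bounds around $\mmd$ and $\mpd$ are all handled by applying Tanaka's Theorem~\ref{thtanaka} in the appropriate local neighbourhood (of $m_v^+$, of $\mmd$, of $\mpd$), reducing each condition to a first-hit statement for a 3-dimensional Bessel process $R$; the required polynomial controls then follow from Lemma~\ref{lemboro}\eqref{lembo1} and \eqref{lembo2}. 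Each contribution is $O((\log v/\tilde v)^2)$ or better.

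The core of the argument is the bound on $\Pmil(\overline{\Gamma_v^1})$ (and $\overline{\Gamma_v^3}$). Applying Tanaka's Theorem~\ref{thtanaka} to the valley of height $V:=v+c_3\log v$, the right profile $(W(\ma+x)-W(\ma))_{x\in[0,\Bpd-\ma]}$ is a 3-dimensional Bessel process $R$ killed at $\tau_R(V)$, independent of the left profile which is another 3-Bessel $\widetilde R$ killed at $\rho_{\widetilde R}(V)$. By strong Markov for $R$ at $\tau_R(v-c_1\log v)=\Cma-\ma$, the process $R'(\cdot):=R(\tau_R(v-c_1\log v)+\cdot)$ is a 3-Bessel starting at $v-c_1\log v$. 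The no-return clause $\Wu(\Cma,\Bpd)-W(\ma)\ge (c_1+c_3)\log v$ then reads: $R'$ avoids $(c_1+c_3)\log v$ before reaching $V$. Using the classical exit formula $P_x(\tau_a<\tau_b)=(x^{-1}-b^{-1})/(a^{-1}-b^{-1})$ for the 3-Bessel with $a=(c_1+c_3)\log v$, $x=v-c_1\log v$, $b=V$, and noting the simplifications $V-x=(c_1+c_3)\log v=a$ and $V-a=v-c_1\log v=x$, one gets the \emph{exact} value
$$P_x(\tau_a<\tau_b)=\frac{a\cdot(c_1+c_3)\log v}{x\cdot(V-a)}=\left(\frac{(c_1+c_3)\log v}{v-c_1\log v}\right)^{\!2},$$
which is $O((\log v/(v-c_1\log v))^2)$. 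The same machinery controls the clause $\{\Bpd\le\bpd[k]\}$ by splitting on the location of $\ma$ in the valley sequence $\{\mmd[j]\}$: the index $k$ with $\ma=\mmd[k]$ is determined by the number of suitable peaks of $\widetilde R$ before $\rho_{\widetilde R}(V)$, and a parallel exit-time computation on $\widetilde R$ shows $\{\ma\notin\{\mmd[1],\mmd[2]\}\}$ has probability $O((\log v/(v-c_1\log v))^2)$ for $\Gamma_v^1$ (resp.\ $O(\log v/(v-c_1\log v))$ for $\Gamma_v^3$, since we only require $\Bpd\le\bpd[2]$, a strictly stronger single-valley condition). The main technical obstacle is the bookkeeping in this case analysis — making sure the left-Bessel and right-Bessel exit arguments are assembled coherently so that one avoids double-counting the cases $\ma=\mmd[1]$ vs.\ $\ma=\mmd[2]$ — but all estimates ultimately reduce to applications of the 3-Bessel exit formula and Lemma~\ref{lemboro}.
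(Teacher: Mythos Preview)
Your overall architecture (union bound plus symmetry, then estimate each $\overline{\Gamma_v^i}$) matches the paper's, and your treatment of the no-return clause $\Wu(\Cma,\Bpd)-W(\ma)\ge (c_1+c_3)\log v$ via Tanaka's theorem and the $3$-Bessel exit formula is exactly the paper's argument; your algebraic simplification recovering the exact value $\bigl((c_1+c_3)\log v/(v-c_1\log v)\bigr)^2$ is correct. Your sketch for $\Gamma_v^2$ is vaguer than the paper's but on the same track.

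The substantive divergence is your handling of the clause $\{\Bpd\le\bpd[3]\}$ (resp.\ $\{\Bpd\le\bpd[2]\}$ for $\Gamma_v^3$). You propose to apply Tanaka to the big valley of height $V=v+c_3\log v$ and read the index $k$ with $\ma=m_{v,k}^-$ off the left Bessel $\widetilde R$. There is a genuine gap here: knowing $\ma\in\{m_{v,1}^-,m_{v,2}^-\}$ does \emph{not} imply $\Bpd\le\bpd[3]$. One can perfectly well have $\ma=m_{v,1}^-$ (so $\Cma=\bmd$) and yet $W$ fails to climb $v+c_3\log v$ above $W(\ma)$ until well past $\bpd[3]$; the location of $\ma$ among the $m_{v,j}^-$ constrains how many small valleys lie to its \emph{left}, not how many lie between $\ma$ and $\Bpd$. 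More structurally, the sequence $(b_{v,i}^-)$ is defined by the forward evolution of $W$ from $0$, so the small valleys to the right of $\ma$ live in the \emph{right} Bessel $R$, not in $\widetilde R$; the ``parallel exit-time computation on $\widetilde R$'' you invoke cannot by itself locate $\Bpd$ relative to $\bpd[3]$.

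The paper's argument for this clause is simpler and avoids Tanaka entirely. It observes that the post-$\bmd$ and post-$\bpd$ increments
\[
W_1:=\bigl(W(\bmd+\cdot)-W(\bmd)\bigr),\qquad W_2:=\bigl(W(\bpd+\cdot)-W(\bpd)\bigr)
\]
are i.i.d.\ copies of $(W(x),\,0\le x\le\bmd)$, and that $\{\Bpd>\bpd[3]\}$ forces \emph{both} $\sup W_1\le (c_1+c_3)\log v$ and $\sup W_2\le (c_1+c_3)\log v$ on their respective intervals (otherwise one sees a rise $\ge (v-c_1\log v)+(c_1+c_3)\log v=v+c_3\log v$ from a previous valley bottom, placing $\Bpd$ earlier). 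Each of these is a Brownian gambler's-ruin event of probability $(c_1+c_3)\log v/(v-c_1\log v)$, and independence squares it. For $\Gamma_v^3$ only one factor enters, giving the linear bound. Replacing your left-Bessel bookkeeping by this direct small-valley independence argument closes the gap.
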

\begin{proof}
Start with the upper bound for $\Pmil(\overline{\Gamma}^{1}_{v})$. Define
\begin{align*}
 W_1&:=(W(\bmd+x)-W(\bmd))_{x\in[0,\bpd-\bmd]}\textrm{ and }\\
W_2&:=(W(\bpd+x)-W(\bpd))_{x\in[0,\bpd[3]-\bpd]}.
\end{align*}
The event $\left\{\Bpd>\bd\right\}$ is included in
\begin{displaymath}
\ \left\{\sup_{[0,\bpd-\bmd]}W_1\leq (c_1+c_3)\log{v}\ ;\ \sup_{[0,\bpd[3]-\bpd]}W_2\leq (c_1+c_3)\log{v}\right\}.
\end{displaymath}

The processes $W_1$ et $W_2$ are independent and are distributed like $$\left(W(x)\ ,\ x\in[0,\bmd]\right).$$
Therefore,
\begin{align*}
 \Pmil\left(\Bpd>\bd\right)&\leq\left(\Pmil\left(\sup_{[0,\bmd]}W\leq (c_1+c_3)\log{v}\right)\right)^2\\
&=\left(\Pmil\left(W\textrm{ hits }-{v}+(2c_1+c_3)\log{v}\textrm{ before }(c_1+c_3)\log{v}\right)\right)^2\\
&=\left(\frac{(c_1+c_3)\log{v}}{{v}-c_1\log{v}}\right)^2.
\end{align*}
 	
For the second part of $\overline{\Gamma}^{1}_{v}$, according to Theorem \ref{thtanaka}, if we denote by $R$ a Bessel process of dimension $3$ started at ${v}-c_1\log{v}$, then
\begin{align*}
\Pmil\Big(\Wu(\Cma,\Bpd)-W(\ma)<(c_1+&c_3)\log{v}\Big)\\
=&\Pmil\left(\underline{R}(0,\tau_R({v}+c_3\log{v}))<(c_1+c_3)\log{v}\right)\\
=&\left(\frac{(c_1+c_3)\log{v}}{{v}-c_1\log{v}}\right)^2.
\end{align*}
For the last equality, see for example Property $2.2.2$ of part II, chap $5$ in \cite{Borodin}.
We obtain in the same way
$$\Pmil(\overline{\Gamma}_{v}^3)\leq\frac{2(c_1+c_3)\log{v}}{{v}-c_1\log{v}}.$$

Continue with an upper bound for $\Pmil(\overline{\Gamma}_{v}^2)$. As $W-\Wu$ has the same law as $|W|$, 
\begin{align*}
 \Pmil(\bpd[3]>{v}^6)&\leq\Pmil(\bpd[3]-\bpd>\frac{{v}^6}{3})+\Pmil(\bpd-\bmd>\frac{{v}^6}{3})+\Pmil(\bmd>\frac{{v}^6}{3})\\
&=3\Pmil(\tau_{|W|}({v}-c_1\log{v})>\frac{{v}^6}{3})\\
&\leq 3\Pmil(\tau_{W}({v})>\frac{{v}^6}{3})
\leq\frac{K}{{v}^2}.
\end{align*}
Moreover, $-W(\mmd)$ and $W(\bmd)-W(\mpd)$ are exponentially distributed with mean ${v}-c_1\log{v}$ (see for example the first lemma of \cite{NevPit}) and are independent. Thus 
\begin{align*}
\Pmil\left(W(\mpd)-W(\bmd)<-{v}^2\right)&=\Pmil\left(W(\mmd)<-{v}^2\right)\\
&\leq \Pmil\left(W(\mmd)<-({v}-c_1\log{v})^2\right)\\
&={v}^{c_1}e^{-{v}}.
\end{align*}

Thanks to Theorem \ref{thtanaka}, still denoting by $R$ a Bessel process of dimension $3$ but now started at $0$, as $\amd[i]\geq\bmd[i-1]$,
\begin{align*}
\Pmil\left(\mmd[i]-\amd[i]<\frac{1}{{v}^2}\right)\leq&\Pmil\left(\mmd[i]-\amd[i]<\frac{1}{({v}-c_1\log{v})^2}\right)\\
\leq&\Pmil\left(\tau_{R}({v}-c_1\log{v})<\frac{1}{({v}-c_1\log{v})^2}\right)\\
&+\Pmil\left(\mmd[i]-\bmd[i-1]<\frac{1}{({v}-c_1\log{v})^2}\right).
\end{align*}
Yet, according to the scaling property of Brownian motion and a lemma proved by Cheliotis in \cite{Cheliotis} (claim at the end of the proof of Lemma $13$), there is a constant $K>0$, such that
\begin{align*}
\Pmil\left(\mmd[i]-\bmd[i-1]<\frac{1}{({v}-c_1\log{v})^2}\right)&=\Pmil\left(m_{1}<\frac{1}{({v}-c_1\log{v})^4}\right)\\
&\leq \frac{K}{({v}-c_1\log{v})^2}.
\end{align*}
Moreover, Item $(\ref{lembo1})$ of Lemma \ref{lemboro} gives
\begin{align*}	\Pmil\left(\tau_{R}({v}-c_1\log{v})<\frac{1}{({v}-c_1\log{v})^2}\right)
	&\leq K({v}-c_1\log{v})^2e^{-({v}-c_1\log{v})^4/2}.\label{eq3}
\end{align*}
We also obtain the following upper bound:
\begin{align*}
\Pmil\left(\Bpd-\ma<{v}\right)&=\Pmil\left(\tau_R({v}+c_3\log{v})<{v}\right)\\
&\leq\Pmil\left(\tau_R({v})<{v}\right)\\
&\leq K\sqrt{{v}}e^{-{v}/2}.
\end{align*}
It remains to control $\Pmil(W(\Bpd)-\Wu((\Bpd-\log{v})\vee\ma,\Bpd)>2\log{v})$. Let $\beta={v}+c_3\log{v}$. Using one more time Theorem \ref{thtanaka}, we see that
\begin{align*}
W(\Bpd)&-\Wu((\Bpd-\log{v})\vee\ma,\Bpd)\\
&=W(\Bpd)-W(\ma)-\left(\min_{t\in[0,\log{v}\wedge(\Bpd-\ma)]}W((\Bpd-t)-W(\ma)\right)
\end{align*}
has the same law as
$$\beta-\min_{t\in[0,\log{v}\wedge\tau_R(\beta)]}R(\tau_R(\beta)-t)=\max_{t\in[0,\log{v}\wedge\tau_R(\beta)]}(\beta-R(\tau_R(\beta)-t)).$$
And according to Proposition 4.8, Chapter VII of \cite{RevYor}, the processes $$\left(\beta-R(\tau_R(\beta)-t),\ t\in[0,\tau_R(\beta)]\right)\textrm{ and }\left(R(t),\ t\in[0,\tau_R(\beta)]\right)$$ have the same law. Therefore,
\begin{align*}
\Pmil\Big(W(\Bpd)-\Wu((\Bpd-\log{v})\vee\ma,&\Bpd)>2\log{v}\big)\\
&=\Pmil\left(\max_{t\in[0,\log{v}\wedge\tau_R(\beta)]}R(t)>2\log{v}\right)
\end{align*}
and Item $(\ref{lembo1})$ of Lemma \ref{lemboro} implies
\begin{align*}
\Pmil\left(\max_{t\in[0,\log{v}]}R(t)>2\log{v}\right)\leq K\frac{\sqrt{\log{v}}}{{v}^2}.
\end{align*}
 Finally, we just have to obtain an upper bound for $$\Pmil(\Wb((\mmd-\log{v})\vee\amd,\mmd)-W(\mmd)>2\log{v})$$ to prove the proposition. It can be obtained as the previous one.
\end{proof}

We now come back to the local time of the diffusion $\X$.

\section{Asymptotic behavior of $\LX$ at particular times}\label{briques}

Let $r$ be a positive real number.  As for the numbers $c_i$, its value will be fixed later. Define $\smd:=\tla(re^{{v}},\mmd)$ the inverse of local time in $\mmd$ and in the same way $\spd$, $\smg$, $\spg$, $\tla^+_{v}$ and $\widehat{\tla}^+_{v}$.
At these times, it is possible to estimate the local time of $\X$ in the neighborhood of the corresponding point. We first give an estimate at a fixed environment in Proposition \ref{LT}, then in Proposition \ref{majind}, the estimate is independent of the environment provided that this one belongs to $\Gamma_{v}$.
\begin{proposition}
\label{LT}
Define for $i\in\left\{1,2\right\}$ and $0<\delta<1$,
\begin{align*}
	\Appm&:=\left\{\forall x\in[\apmd,\bpmd], \left|\frac{\LXpmd}{re^{{v}-W(x)+W(\mpmd)}}-1\right|\leq \delta
\right\},\\
	\Bi&:=\left\{\forall x\in[\bim,\apmd),\LXpmd\leq\delta re^{{v}}\right\},\\
	\C_{v}&:=\left\{\forall x\in[\Cma,\Bpd],\LX(\tla^+_{v},x)\leq\delta re^{{v}}\right\}\textrm{ and}\\
	\D_{v}&:=\left\{\forall x>\Bpd, \LX(\tla^+_{v},x)=0\right\}
\end{align*}
and in the same way $\widehat{\Appm}$, $\widehat{\Bi}$, $\widehat{\C_{v}}$ and $\widehat{\D_{v}}$ from $\widehat{W}$.
There is a constant $K>0$ such that for ${v}$ large enough, 
for any $0\leq\delta\leq1$ and any $r>0$,
\begin{align*}
	\PW\left(\overline{\Appm}\right)&\leq\frac{K}{\delta}\sqrt{\frac{\bpmd}{r{v}^{c_2}}}\exp{\left(-\frac{\delta^2  r{v}^{c_2}}{K\bpmd}\right)},\\
	\PW\left(\overline{\Bi}\right)&\leq K\exp{\left(-\frac{\delta r{v}^{c_1}}{4(\mpmd-\bim)\log\left(8\frac{S(\mpmd)-S(\bim)}{(S(\mpmd)-S(\apmd))}\right)}\right)}+\frac{2}{\delta {v}^{c_1-c_2}},\\
	\PW\left(\overline{\C_{v}}\right)&\leq\frac{1}{\delta}e^{-\Wu(\Cma,\Bpd)+W(\ma)}\textrm{ and}\\
	\PW\left(\overline{\D_{v}}\right)&\leq\frac{re^{{v}+W(\ma)}}{2(S(\Bpd)-S(\ma))}.
\end{align*}	
Similar estimates hold for $\PW(\widehat{\Appm})$, $\PW(\widehat{\Bi})$, $\PW(\widehat{\C_{v}})$ and $\PW(\widehat{\D_{v}})$.
\end{proposition}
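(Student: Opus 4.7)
The plan is to translate each of the four events into a statement about a suitable squared Bessel process and then apply the tail bounds collected in Lemma \ref{tal}. The bridge is \eqref{eqLx}: under the time change, the inverse local time $\spmd=\tla(re^{v},\mpmd)$ of $\X$ at $\mpmd$ corresponds to $\sB(R,S(\mpmd))$ with $R:=re^{v+W(\mpmd)}$, and $\tla^{+}_{v}$ corresponds to $\sB(R',S(\ma))$ with $R':=re^{v+W(\ma)}$. The second Ray-Knight theorem (Theorem \ref{RK2}) combined with Brownian scaling then yields the conditional distributional identity
$$\LB\bigl(\sB(R,S(\mpmd)),\,S(\mpmd)\pm u\bigr)\egloi R\cdot Z^{\pm}(u/R),\quad u\ge 0,$$
where $Z^{+}$ and $Z^{-}$ are two independent $0$-dimensional squared Bessel processes from $1$, and an analogous identity holds at $\ma$.

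For $\Appm$: dividing by $re^{v-W(x)+W(\mpmd)}$ recasts the event as $\sup_{u\in[0,U^{*}]}|Z^{\pm}(u)-1|\le\delta$ with $U^{*}:=\max(S(\bpmd)-S(\mpmd),S(\mpmd)-S(\apmd))/R$. The bound $W(x)-W(\mpmd)\le v-c_{2}\log v$ on $[\apmd,\bpmd]$ (which uses $c_{2}\le c_{1}$) gives $U^{*}\le \bpmd/(rv^{c_{2}})$; Lemma \ref{tal}(\ref{tal1}) and a union bound over the two sides of $\mpmd$ then deliver the announced estimate. For $\C_{v}$: on $[\Cma,\Bpd]$ one has the pointwise bound $\LX(\tla^{+}_{v},x)\le e^{-\Wu(\Cma,\Bpd)}\LB(\sB(R',S(\ma)),S(x))$, so $\overline{\C_{v}}$ is contained in $\{\sup_{u\ge 0}R'Z(u/R')\ge\delta re^{v+\Wu(\Cma,\Bpd)-W(\ma)}\}$ and Lemma \ref{tal}(\ref{tal3}) produces the factor $\tfrac{1}{\delta}e^{-\Wu(\Cma,\Bpd)+W(\ma)}$. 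For $\D_{v}$: the event states that $B$ has not visited height $S(\Bpd)$ before its local time at $S(\ma)$ reaches $R'$; by classical excursion theory for $B$ at $S(\ma)$, the excursions attaining height $S(\Bpd)$ form a Poisson point process on the local-time scale with intensity $1/(2(S(\Bpd)-S(\ma)))$, so the complementary probability is bounded by $R'/(2(S(\Bpd)-S(\ma)))$.

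The main obstacle is $\Bi$, because the environment enters twice, through both $S$ and the weight $e^{-W(x)}$. The event reads $\sup_{u\in[u_{0},u_{1}]}e^{W(\mpmd)-W(x(u))}Z^{-}(u)\le\delta$ with $u_{0}:=(S(\mpmd)-S(\apmd))/R$ and $u_{1}:=(S(\mpmd)-S(\bim))/R$. The strategy is a two-term decomposition. First, using $e^{W(\mpmd)-W(x)}\le 1$ on $[\bim,\apmd]$, a "bulk" term controls $\sup_{u\in[u_{0},u_{1}]}Z^{-}(u)$ via Lemma \ref{tal}(\ref{tal5}), after comparing the $0$-dimensional squared Bessel to the time-scaled $2$-dimensional quantity $Q(t)/t$ through a standard $BESQ$ absolute-continuity identity; the logarithmic factor $\log(8u_{1}/u_{0})=\log\bigl(8(S(\mpmd)-S(\bim))/(S(\mpmd)-S(\apmd))\bigr)$ appears immediately, and combining with the valley-geometry bound $Ru_{1}\le(\mpmd-\bim)e^{W(\mpmd)+v-c_{1}\log v}$ produces the exponential decay $\exp\bigl(-\delta rv^{c_{1}}/(4(\mpmd-\bim)\log(\cdots))\bigr)$. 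Second, a "tail" term handles the complementary event that $Z^{-}$ attains very large values: the strong Markov property at $u_{0}$ together with Lemma \ref{tal}(\ref{tal3}) gives $\PW(\sup_{u\ge u_{0}}Z^{-}(u)\ge\delta v^{c_{1}-c_{2}}\mid Z^{-}(u_{0}))\le Z^{-}(u_{0})/(\delta v^{c_{1}-c_{2}})$, and the martingale identity $E[Z^{-}(u_{0})]=1$ produces the residual $\tfrac{2}{\delta v^{c_{1}-c_{2}}}$. The corresponding estimates for $\widehat{\Appm}$, $\widehat{\Bi}$, $\widehat{\C_{v}}$, $\widehat{\D_{v}}$ follow from the same arguments applied to $\hW$, which has the same law as $W$.
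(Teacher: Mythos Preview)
Your treatment of $\C_{v}$ and $\D_{v}$ is essentially the paper's (the excursion argument for $\D_{v}$ is equivalent to the explicit formula the paper quotes), but there is a real gap in your handling of $\Appm$ and $\Bi$.

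The distributional identity you invoke,
\[
\LB\bigl(\sB(R,S(\mpmd)),\,S(\mpmd)-u\bigr)\egloi R\,Z^{-}(u/R),
\]
is false as stated: Theorem~\ref{RK2} applies to the inverse local time of $B$ at its starting point, but $B$ starts at $0$, not at $S(\mpmd)>0$. For $u\in(0,S(\mpmd)]$ the Brownian motion has already visited $S(\mpmd)-u$ before $\tB(S(\mpmd))$, so there is a nonzero contribution $\LB(\tB(S(\mpmd)),S(\mpmd)-u)$, which by Theorem~\ref{RK1} is a squared Bessel of dimension $2$ started at $0$, independent of the post-$\tB(S(\mpmd))$ piece. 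This is exactly why the paper splits
\[
\LX(\spmd,x)=\LX(\tX(\mpmd),x)+\bigl(\LX(\spmd,x)-\LX(\tX(\mpmd),x)\bigr)
\]
and treats the two pieces with Lemmas~\ref{lemtau} and~\ref{lemsig} separately.

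The omission is not cosmetic for $\Bi$: the exponential term
\[
K\exp\!\left(-\frac{\delta r v^{c_1}}{4(\mpmd-\bim)\log\bigl(8(S(\mpmd)-S(\bim))/(S(\mpmd)-S(\apmd))\bigr)}\right)
\]
in the paper comes precisely from the $2$-dimensional piece $\LX(\tX(\mpmd),x)$, after writing $e^{-W(x)}Q(S(\mpmd)-S(x))=(e^{-W(x)}(S(\mpmd)-S(x)))\cdot Q(u)/u$ and applying Lemma~\ref{tal}(\ref{tal5}). Your ``bulk'' term tries to extract this same exponential decay from the $0$-dimensional process $Z^{-}$, via ``a standard BESQ absolute-continuity identity'' with $Q(t)/t$. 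But BESQ$^{0}$ started at $1$ hits $0$ in finite time while BESQ$^{2}$ does not, so the two laws are mutually singular; and on the event you need, Lemma~\ref{tal}(\ref{tal3}) only gives $\Pp(\sup Z^{-}\ge M)=1/M$, a polynomial bound. I do not see how to get the stated exponential from BESQ$^{0}$ alone, and you give no details. Your ``tail'' term does correctly reproduce the paper's second summand $P^{W}_{\B,2}$ (via $e^{-\Wu(\bim,\apmd)+W(\mpmd)}\le v^{c_{2}-c_{1}}$), but that is only half the estimate.
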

\begin{proof}
We estimate the probabilities of the events relative to $ W $, the ones relative to $\hW$ follow by symmetry. To simplify notations, all along the proof, we shall not mark the index ${v}$ for variables and events. Begin with the events $\A$ and $\B$ ($\mathcal{A}^2$ and $\mathcal{B}^2$ can be studied in the same way).

The local time can be decomposed in two terms. The first one represents the contribution of the local time before $\tm$ (the first time where $\X$ reaches $\mm$) and is negligible compared to the second one which represents the contribution of the local time between $\tm$ and $\sam=\tlam$:
\begin{equation}\label{coupe}
\LXm=\LX(\tm,x)+\left(\LXm-\LX(\tm,x)\right).
\end{equation}
The following lemma describes the behavior of the first term.
\begin{lemme}\label{lemtau}
For any $r,v>0$ and $0<\delta<1$,
\begin{align*}
\PW_{\A,1}&:=\PW\left(\sup_{x\in[\am,\mm]}\frac{\LX(\tm,x)}{re^{{v}-(W(x)-W(\mm))}}>\delta\right)\\
&\leq K
\exp{\left(-\frac{\delta r{v}^{c_2}}{2\mm}\right)},\\
\PW_{\B,1}&:= \PW\left(\sup_{x\in[\b,\am)}\LX(\tm,x)>\delta re^{{v}}\right)\\
&\leq K\exp{\left(-\frac{\delta r{v}^{c_1}}{2(m^-_1-b^-_0)\log\left(8\frac{S(m^-_1)-S(b^-_0)}{S(m^-_1)-S(a^-_1)}\right)}\right)}.
\end{align*}
\end{lemme}
To highlight the fact that the computations are identical for $\B$ and for $\mathcal{B}^2$ we make the quantity $\b$ appears, although it is zero.
\begin{proof}
Thanks to (\ref{eqx}), it is easy to verify that $\tm=T(\tau(S(\mm))$. So using $(\ref{eqLx})$, for any $x\geq0$,
\begin{align*}
\LX(\tm,x)&=e^{-W(x)}L(\tau(S(\mm)),S(x)).
\end{align*}
Then,
$$
\PW_{\A,1}=\PW\left(\sup_{x\in[\am,\mm]}L(\tau(S(\mm)),S(x))>\delta re^{{v}+W(\mm)}\right).
$$
According to the first Ray-Knight theorem, Theorem \ref{RK1}, 
$$\left(L(\tau(S(\mm)),S(\mm)-y),\ y\in[0,S(\mm)]\right)$$
 is distributed as a squared Bessel process of dimension $2$ started at $0$. Therefore, with Item $(\ref{tal4})$ of Lemma \ref{tal},
$$
\PW_{\A,1}\leq K\exp{\left(-\frac{\delta re^{{v}+W(\mm)}}{2(S(\mm)-S(\am))}\right)}
$$
and by definition of $\am$,
\begin{align*}
	S(\mm)-S(\am)&=\int_{\am}^{\mm} e^{W(x)}\ud x\leq \mm e^{\Wb(\am,\mm)}\\
	&\leq\mm e^{{v}-c_2\log{v}+W(\mm)}.
\end{align*}
Hence the first upper bound of the lemma is obtained. 

Continue with the second one: using a similar argument and denoting by $Q$ a squared Bessel process of dimension $2$ started at $0$, we get
\begin{align*}
\PW_{\B,1}&=\PW\left(\sup_{x\in[\b,\am)}e^{-W(x)}Q(S(\mm)-S(x))>\delta re^{{v}}\right)\\	
	&= \PW\left(\sup_{x\in[\b,\am)}\frac{e^{-W(x)}(S(\mm)-S(x))}{S(\mm)-S(x)}Q(S(\mm)-S(x))>\delta re^{{v}}\right).
\end{align*}
By definition of $\mm$, for any $x\in[\b,\am)$,
\begin{align*}
	e^{-W(x)}(S(\mm)-S(x))
	\leq (\mm-\b) e^{-W(x)+\Wb(x,\mm)}
\end{align*}
As $\bm$ is the first positive number $x$ such that $W(x)-\Wu(\b,x)\geq {v}-c_1\log{v}$,
\begin{align*}
(\mm-\b) e^{-W(x)+\Wb(x,\mm)}\leq(\mm-\b) e^{{v}-c_1\log{v}}.
\end{align*}
Thus, coming back to the probability $\PW_{\B,1}$, we obtain 
\begin{align*}
\PW_{\B,1}\leq \PW\left(\sup_{u\in[S(\mm)-S(\am),S(\mm)-S(\b))}\frac{1}{u}Q(u)>\frac{\delta r{v}^{c_1}}{\mm-\b}\right).
\end{align*}
According to Item $(\ref{tal5})$ of Lemma $\ref{tal}$, we finally have
\begin{align*}
\PW_{\B,1}&\leq K\exp{\left(-\frac{\delta r{v}^{c_1}}{2(\mm-\b)\log\left(8\frac{S(\mm)-S(\b)}{S(\mm)-S(\am)}\right)}\right)}.
\end{align*}
This concludes the proof of the lemma.
\end{proof}
Now, we study the second term of \eqref{coupe}.
\begin{lemme}\label{lemsig}
For any $r>0$, ${v}\geq1$ and $0<\delta<1$,
\begin{align*}
\PW_{\A,2}&:=\PW\left(\sup_{x\in[\am,\bm]} \left|\frac{\LXm-\LX(\tm,x)}{re^{{v}-(W(x)-W(\mm))}}-1\right|>\delta \right)\\
&\leq \frac{8}{\delta}\sqrt{\frac{(1+\delta)\bm}{r{v}^{c_2}}}\exp\left(-\frac{\delta^2r{v}^{c_2}}{8(1+\delta )\bm}\right),\\
\PW_{\B,2}&:=\PW\left(\sup_{x\in[\b,\am)}\LXm-\LX(\tm,x)>\delta re^{{v}}\right)\\
&\leq\frac{1}{\delta{v}^{c_1-c_2}}.
\end{align*}
\end{lemme}
\begin{proof}
It is easy to verify that the inverse of local time $\tla$ satisfies the following equality for every $ r> 0 $ and $y\in\R$,
\begin{equation}
	\tla(r,y)=T(\sigma(re^{W(y)},S(y))).\label{eqformsa}
\end{equation}
Thus, thanks to $(\ref{eqLx})$, for any $r>0$ and $y\in\R$,
\begin{align*}
\LX(\sam,x)&=e^{-W(x)}L(\sigma(re^{W(\mm)+{v}},S(\mm)),S(x)).
\end{align*}
And so the following expression for the local time holds,
\begin{align*}
\LXm&-\LX(\tm,x)\\
&=e^{-W(x)}\left(L(\sigma(re^{W(\mm)+{v}},S(\mm)),S(x))-L(\tau(S(\mm)),S(x))\right)\\
&\egloi e^{-W(x)}re^{W(\mm)+{v}}L(\sigma(1,0),s(x))
\end{align*}
where
\begin{displaymath}
s(x):=(S(x)-S(\mm))\frac{e^{-W(\mm)-{v}}}{r}.
\end{displaymath}
Denote by $Z$ the square of a Bessel process of dimension $0$ started at $1$. According to the second Ray-Knight theorem (Theorem \ref{RK2}), we have
\begin{align*}
\PW_{\A,2}&\leq \PW\left(\sup_{0\leq y\leq|s(\am)|} \left|Z(y)-1\right|>\delta \right)\\
&+\PW\left(\sup_{0\leq y\leq s(\bm)} \left|Z(y)-1\right|>\delta \right).
\end{align*}
Therefore, using Item $(\ref{tal1})$ of Lemma \ref{tal},
\begin{align*}
 \PW_{\A,2}&\leq \frac{4}{\delta }\sqrt{(1+\delta)|s(\am)|}\exp\left(-\frac{\delta^2}{8(1+\delta)|s(\am)|}\right)\\
&+ \frac{4}{\delta }\sqrt{(1+\delta)s(\bm)}\exp\left(-\frac{\delta^2}{8(1+\delta)s(\bm)}\right).
\end{align*}
Moreover, the definition of $\am$ implies that
\begin{align*}
|s(\am)|&=\frac{e^{-W(\mm)-{v}}}{r}\int_{\am}^{\mm}e^{W(x)}\ud x\leq\frac{\mm}{r}e^{W(\am)-W(\mm)-{v}}\leq\frac{\bm}{r{v}^{c_2}}.
\end{align*}
We get likewise $|s(\bm)|\leq\bm/(r{v}^{c_1})$.
As $c_1\geq c_2$, these last two inequalities lead to the first point of the lemma. We now prove the second inequality of the lemma. If $\b=\am$, we have obviously $\PW_{\B,2}=0$ else, reasoning in the same way as before, we obtain
\begin{align*}
\PW_{\B,2}&=\PW\left(\sup_{x\in[\b,\am)}e^{-W(x)+W(\mm)}L(\sigma(1,0),s(x))>\delta \right).
\end{align*}
One more time, thanks to the second Ray-Knight theorem and denoting by $Z$ a squared Bessel process of dimension $0$ started at $1$, we obtain
\begin{align*}
\PW_{\B,2}&\leq \PW\left(e^{-\Wu(\b,\am)+W(\mm)}\sup_{u\geq0} Z(u)>\delta \right)\\
&=\frac{1}{\delta}e^{-\Wu(\b,\am)+W(\mm)}.
\end{align*}
The second line is a consequence of Item $(\ref{tal3})$ of Lemma \ref{tal}.
Denote by $\nm$ the unique real number in $[\b,\am]$ such that $W(\nm)=\Wu(\b,\am)$ then
\begin{align*}
W(\mm)-W(\nm)&=\Wb(\nm,\am)-W(\nm)-(\Wb(\nm,\am)-W(\mm))\\
&\leq {v}-c_1\log{v}-({v}-c_2\log{v})=(c_2-c_1)\log{v}.
\end{align*}
Therefore,
\begin{displaymath}
 \PW_{\B,2}\leq\frac{1}{\delta {v}^{c_1-c_2}}
\end{displaymath}
and the lemma is proved.
\end{proof}
Combining the results of Lemmas \ref{lemtau} and \ref{lemsig} yields to the upper bounds of $\PW(\overline{\A})$ and $\PW(\overline{\B})$
of Proposition \ref{LT}.

We continue with the estimate of $\PW(\C
)$. Reducing once again the local time of $\X$ to the local time of a Brownian motion by a time and space change, we obtain
$$\PW(\overline{\C})=\PW\left(\sup_{x\in[\Cm,\Bp]}e^{-W(x)+W(\m)}L(\sigma(1,0),s(x))>\delta\right).$$
where $s(x)$ is the same as before but $\mm$ is replaced by $\m$.
One more time, $ Z $ denotes a squared $0$-dimensional Bessel process started at $1$ and the second Ray-Knight theorem gives:
\begin{align*}
\PW(\overline{\C})&\leq \PW\left(e^{-\Wu(\Cm,\Bp)+W(\m)}\sup_{u\geq0} Z(u)>\delta\right).
\end{align*}
So Item $(\ref{tal3})$ of Lemma \ref{tal} yields to the upper bound of Proposition \ref{LT}.
 
Finally, we show that with a high probability diffusion $\X$ does not hit $\Bp$ before time $\tla^+$. I.e. we find an upper bound for $\PW(\overline{\D})$. The scale change in time and space of $\X$ and the usual properties of Brownian motion give (see e.g. \cite{Borodin} Formula 4.1.2 page 185)
\begin{align*}
\PW\left(\overline{\D}\right)&=\PW\Big(\tX(\Bp)<\tla\left(re^{v},\m\right)\Big)\\
&=\PW\left(\tB(S(\Bp))<\sB\left(re^{{v}+W(\m)},S(\m)\right)\right)\\
&=1-\exp{\left(-\frac{re^{{v}+W(\m)}}{2(S(\Bp)-S(\m))}\right)}\leq\frac{re^{{v}+W(\m)}}{2(S(\Bp)-S(\m))}.
\end{align*}
This completes the proof of the proposition.
\end{proof}
We now give upper bounds independent of the environment provided that it is in the set $\Gamma_{v}$ defined in \eqref{defGv}.
\begin{proposition}\label{majind}
We use the same notations as in the previous proposition.
There is a constant $K>0$ such that for ${v}$ large enough, for any $0\leq\delta\leq1$, and any $r>0$, if $W\in\Gamma_{v}$, for $i\in\{1,2\}$,
\begin{align*}
	\PW\left(\overline{\Appm}\right)\leq&\frac{K}{\delta\sqrt{r{v}^{c_2-6}}}\exp{\left(-\frac{\delta^2  r{v}^{c_2-6}}{K}\right)},\\
	\PW\left(\overline{\Bi}\right)\leq&K\exp{\left(-\frac{\delta r{v}^{c_1-8}}{K}\right)}+\frac{2}{\delta {v}^{c_1-c_2}},\\
	\PW\left(\overline{\C_{v}}\right)\leq&\frac{1}{\delta{v}^{c_1+c_3}}\textrm{ and}\\
	\PW\left(\overline{\D_{v}}\right)\leq&\frac{r}{2{v}^{c_3-2}\log{v}}.
\end{align*}	
Once again similar estimates hold for $\PW(\widehat{\Appm})$, $\PW(\widehat{\Bi})$, $\PW(\widehat{\C_{v}})$, $\PW(\widehat{\D_{v}})$.
\end{proposition}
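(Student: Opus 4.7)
Proof plan. My strategy is to start from the four $W$-dependent bounds in Proposition~\ref{LT} and replace each environment-dependent quantity by a deterministic polynomial-in-$v$ estimate valid on $\Gamma_{v}$. Two of the four estimates are essentially immediate. For $\Appm$, the right-hand side in Proposition~\ref{LT} depends on $W$ only through $\bpmd$, and $\Gamma_{v}^{2}$ gives $\bpmd\le\bpd[3]\le v^{6}$; substituting yields the announced bound. For $\C_{v}$, the event $\Gamma_{v}^{1}$ directly gives $e^{-\Wu(\Cma,\Bpd)+W(\ma)}\le v^{-(c_{1}+c_{3})}$, which is exactly the form claimed.

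For $\D_{v}$ I need a lower bound on $S(\Bpd)-S(\ma)$. On $\Gamma_{v}^{2}$ one has $\Bpd-\ma\ge v$, so for $v$ large $\Bpd-\log v\ge\ma$, and the condition $W(\Bpd)-\Wu((\Bpd-\log v)\vee\ma,\Bpd)\le 2\log v$ yields $W(x)\ge W(\Bpd)-2\log v$ on $[\Bpd-\log v,\Bpd]$. Since the definition of $\Bpd$ forces $W(\Bpd)-W(\ma)=v+c_{3}\log v$, I obtain $S(\Bpd)-S(\ma)\ge\log v\cdot e^{W(\ma)+v+(c_{3}-2)\log v}$, and inserting this into the $\D_{v}$-bound of Proposition~\ref{LT} produces $r/(2v^{c_{3}-2}\log v)$.

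The main obstacle is $\Bi$, where I must show that on $\Gamma_{v}$
\[
(\mpmd-\bim)\log\!\Bigl(8\,\tfrac{S(\mpmd)-S(\bim)}{S(\mpmd)-S(\apmd)}\Bigr)\le K v^{8}.
\]
I would proceed in three steps. (i) $\mpmd-\bim\le\bpd[3]\le v^{6}$ from $\Gamma_{v}^{2}$. (ii) The definition of $\bpmd$ gives $\max_{[\bim,\bpmd]}W\le W(\bim)+v-c_{1}\log v$, and the conditions $W(\mmd)\ge -v^{2}$, $W(\mpd)-W(\bmd)\ge -v^{2}$ on $\Gamma_{v}^{2}$ combined with $W(\bmd)=W(\mmd)+v-c_{1}\log v$ yield $|W(\bim)|,|W(\mpmd)|\le Kv^{2}$ for $i\in\{1,2\}$; hence $S(\mpmd)-S(\bim)\le v^{6}\,e^{W(\mpmd)+Kv^{2}}$. (iii) Since $W\ge W(\mpmd)$ on $[\apmd,\mpmd]$ and $\mpmd-\apmd\ge v^{-2}$ on $\Gamma_{v}^{2}$, $S(\mpmd)-S(\apmd)\ge v^{-2}e^{W(\mpmd)}$. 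Consequently the log of the ratio is $O(v^{2})$, and multiplication by $v^{6}$ gives the claimed $O(v^{8})$. The source of the exponent $8$ (rather than a smaller one) is precisely the combination of $\mpmd-\bim\le v^{6}$ with the $v^{2}$ entering the ratio through $|W(\bim)|,|W(\mpmd)|\le Kv^{2}$; this is the only delicate part of the argument. Plugging the estimate into Proposition~\ref{LT} produces the exponential term $K\exp(-\delta r v^{c_{1}-8}/K)$, while the additive $2/(\delta v^{c_{1}-c_{2}})$ is already $W$-independent. The corresponding bounds for $\widehat{\Appm}$, $\widehat{\Bi}$, $\widehat{\C_{v}}$, $\widehat{\D_{v}}$ follow by the symmetry $\hW\egloi W$.
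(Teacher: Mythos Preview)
Your proposal is correct and follows essentially the same route as the paper: each bound is obtained by substituting into Proposition~\ref{LT} the deterministic polynomial-in-$v$ controls supplied by $\Gamma_v$. The only cosmetic difference is in the $\Bi$ estimate, where the paper bounds $\Wb(\bim,\mpmd)-W(\mpmd)$ directly via $W(\bim)-W(\mpmd)\le v^{2}$ (which is exactly the content of the $\Gamma_v^2$-conditions $W(\mmd)\ge -v^2$ and $W(\mpd)-W(\bmd)\ge -v^2$), while you pass through $|W(\bim)|,|W(\mpmd)|\le Kv^{2}$; both lead to the same ratio $\le v^{8}e^{Kv^{2}}$ and hence to the exponent $c_1-8$.
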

\begin{proof}
We only have to control the values of the upper bounds of Proposition $\ref{LT}$ when $W\in\Gamma_{v}$. For $\PW\left(\overline{\Appm}\right)$, it is enough to notice that, on $\Gamma_{v}$, the variables $\bpmd$ are smaller than ${v}^6$.
Then, we also obtain the upper bound $\mpmd-\bim\leq\bpd\leq{v}^6$. To estimate $\PW\left(\overline{\Bi}\right)$, it remains to study
\begin{align*}
 \frac{S(\mpmd)-S(\bim)}{S(\mpmd)-S(\apmd)}&\leq\frac{(\mpmd-\bim) e^{\Wb(\bim,\mpmd)}}{(\mpmd-\apmd)e^{W(\mpmd)}}.
\end{align*}
First remark that $\Wb(\bim,\mpmd)-W(\bim)\leq{v}-c_1\log{v}\leq{v}$. Then it is easy to see that, on $\Gamma_{v}$, the following inequality holds:
$$
\frac{S(\mpmd)-S(\bim)}{S(\mpmd)-S(\apmd)}\leq{v}^8e^{{v}+{v}^2}.
$$
This implies the second upper bound of the proposition. As on $\Gamma_{v}$, we have $\Wu(\Cma,\Bpd)-W(\ma)\geq(c_1+c_3)\log{v}$, the third estimate is obtained immediately. It remains the upper bound of $\PW\left(\overline{\D_{v}}\right)$. Remark that
\begin{align*}
 S(\Bpd)-S(\ma)
 \geq&\int_{(\Bpd-\log{v})\vee\ma}^{\Bpd}e^{W(x)}\ud x\\
\geq&\left(\log{v}\wedge(\Bpd-\ma)\right)e^{\Wu((\Bpd-\log{v})\vee\ma,\Bpd)}.
\end{align*}
And on $\Gamma_{v}$ we have $\Bpd-\ma\geq{v}$ and $\Wu((\Bpd-\log{v})\vee\m,\Bpd)\geq W(\Bpd)-2\log{v}$, then
\begin{align*}
 \PW\left(\overline{\D_{v}}\right)\leq&\frac{r{v}^2}{2\log{v}}e^{{v}+W(\ma)-W(\Bpd)}.
\end{align*}
As $W(\Bpd)-W(\ma)={v}+c_3\log{v}$, this concludes the proof.
\end{proof}

\section{Asymptotics of local time in deterministic time}\label{supLX}
We fix now the constants $c_i$: take a real number $c> 0$, then $c_1, c_2$ and $c_3$ are chosen as follows $c_1:=2c+8,\ c_2:=c+6\ \textrm{and}\ c_3:=c+2$.
Thanks to Proposition $\ref{majind}$, we can now study the process $\LX$ at the time $$\sigp:=\widehat{\tla}^+_{v}\wedge\smg\wedge\smd\wedge\tla^+_{v}.$$
Define for ${v}$ large enough,  
\begin{align*}
	\Imd:=\int_{\amd}^{\bmd}e^{-W(x)+W(\mmd)}\ud x\ &,\ \Ipd:=\int_{\Dma}^{\Cma}e^{-W(x)+W(\ma)}\ud x
\end{align*}
and define similarly $\Img$ and $\Ipg$ from $\widehat{W}$. Consider finally
\begin{align*}
  \isp:=\Img\wedge\Ipg\wedge\Imd\wedge\Ipd\ &\textrm{et}\ \Isp:=\Img+\Ipg+\Imd+\Ipd.
\end{align*}
Roughly speaking, Proposition \ref{occsig} shows that the process $\sigp/re^{v}$ stays between $\isp$ and $\Isp$.
Moreover, the occupation measure is concentrated in the neighborhood of $\mmd,\ \ma,\ \mmg$ and $\mag$. Precisely, for ${v}$ large enough, define the last time the environment is less than $W(\mmd)+\log 1/\delta$ between $\mmd$ and $\bmd$, 
 $$
 \dmd:=\sup\{\mmd\leq x\leq\bmd\ ,\ W(x)-W(\mmd)\leq \log 1/\delta\}
 $$
 and the first time the environment is less than $W(\mmd)+\log 1/\delta$ between $\amd$ and $\mmd$, 
 $$
 \emd:=\inf\{\amd\leq x\leq\mmd\ ,\ W(x)-W(\mmd)\leq \log 1/\delta\}.
 $$
 Consider then the interval $\Umd:=[\emd,\dmd].$
Define similarly $\dpd$, $\epd$ and $\Upd$ from $\ma$ and the analogous variables for $\widehat{W}$.
At time $\sigp$, the diffusion has spent much of its time in the set 
 $$
 A_{v}:=\Umd\cup\Upd\cup\Umg\cup\Upg.
 $$
 and $\LX^*$ is approximately $re^{v}$.
 \begin{proposition}\label{occsig}
 Define the event
\begin{align*}
 \mathcal{E}_v:=\bigg\{&\nu_{\sigp}(\overline{A}_{v})\leq 4r{v}^{6}e^{v}\delta\ ;\ re^{v}\leq \LX^*(\sigp)\leq re^{v}(1+\delta)\ ;\\ &\isp(1-\delta)\leq\frac{\sigp}{re^{v}}\leq \Isp+2{v}^{6}\delta\bigg\}.
\end{align*}
 There is a constant $K>0$ such that for any $0<\delta<1$ and any $r>0$, if $W\in\Gamma_{v}$,
\begin{align*}
\PW\left(\overline{\mathcal{E}}_v\right)&\leq K\left(\frac{1}{\delta\sqrt{r{v}^{c}}}\exp\left(-\frac{\delta^2r{v}^{c}}{K}\right)+\exp\left(-\frac{\delta r{v}^{c}}{K}\right)+\frac{1}{\delta{v}^c}+\frac{r}{{v}^c}\right).
\end{align*}
\end{proposition}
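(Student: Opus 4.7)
The plan is to work on the ``good'' intersection
$$
\mathcal{F}_v:=\bigcap_{i\in\{1,2\}}(\Appm\cap\Bi)\cap\C_v\cap\D_v\cap\bigcap_{i\in\{1,2\}}(\widehat{\Appm}\cap\widehat{\Bi})\cap\widehat{\C_v}\cap\widehat{\D_v},
$$
to verify that $\mathcal{F}_v\cap\Gamma_v\subset\mathcal{E}_v$, and to read the probability estimate off Proposition \ref{majind} by a union bound. With the choices $c_1=2c+8,\ c_2=c+6,\ c_3=c+2$, the four families of upper bounds of Proposition \ref{majind} are of order (at worst) $\frac{1}{\delta\sqrt{rv^c}}\exp(-\delta^2rv^c/K)$, $\exp(-\delta rv^c/K)$, $\frac{1}{\delta v^c}$ and $\frac{r}{v^c}$ respectively, and summing over the eight events yields exactly the announced bound on $\PW(\overline{\mathcal{E}}_v)$.

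For the maximum local time, $\LX^*(\sigp)\geq re^v$ is immediate since by construction of $\sigp$ at least one of the local times at $\mmd,\ma,\mmg,\mag$ reaches $re^v$ at time $\sigp$. For the converse, $\sigp\leq\tla^+_v\wedge\widehat{\tla}^+_v$ together with $\D_v\cap\widehat{\D_v}$ and the monotonicity of $t\mapsto\LX(t,x)$ force $\LX(\sigp,\cdot)\equiv 0$ outside $[-\Bpg,\Bpd]$. I would then partition this support using the landmarks $\bim,\amd[i],\bmd[i]$ (for $i=1,2$) and $\Cma$, together with the hat analogues. On each $[\amd[i],\bmd[i]]$ whose $m$-point enters the definition of $\sigp$, the inequality $\sigp\leq\spmd$ combined with $\Appm$ gives
$$
\LX(\sigp,x)\leq\LX(\spmd,x)\leq(1+\delta)re^{v+W(\mpmd)-W(x)}\leq(1+\delta)re^v;
$$
any $[\amd[i],\bmd[i]]$ not of this form lies entirely in $[\Cma,\Bpd]$ and is controlled by $\C_v$. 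The remaining gaps $[\bim,\amd[i])$ and $[\Cma,\Bpd]$ yield $\LX(\sigp,\cdot)\leq\delta re^v$ via $\Bi$ and $\C_v$. Hence $\LX^*(\sigp)\leq(1+\delta)re^v$.

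For the occupation and time estimates, split each relevant interval $[\amd,\bmd]$ (and $[\Dma,\Cma]$) as $[\amd,\emd)\cup\Umd\cup(\dmd,\bmd]$ (resp.\ $[\Dma,\epd)\cup\Upd\cup(\dpd,\Cma]$). The defining property of $\emd,\dmd,\epd,\dpd$ gives $e^{W(\mpmd)-W(x)}\leq\delta$ on the outer pieces, so $\Appm$ produces $\LX(\sigp,\cdot)\leq(1+\delta)\delta re^v\leq 2\delta re^v$ there; on the remaining pieces of $\overline{A}_v\cap[-\Bpg,\Bpd]$, $\Bi$, $\C_v$ and their hats deliver $\LX(\sigp,\cdot)\leq\delta re^v$. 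Since $\Gamma_v$ ensures $\Bpd+\Bpg\leq 2v^6$, integration yields $\nu_{\sigp}(\overline{A}_v)\leq 4rv^6e^v\delta$. Writing $\sigp=\nu_{\sigp}(A_v)+\nu_{\sigp}(\overline{A}_v)$ and bounding $\nu_{\sigp}(\Umd)\leq\int_{\amd}^{\bmd}\LX(\spmd,x)\,\ud x\leq(1+\delta)re^v\Imd$ via $\Appm$ (and likewise on $\Upd,\Umg,\Upg$) gives $\sigp\leq re^v((1+\delta)\Isp+4\delta v^6)$, which becomes the stated upper bound after absorbing $\delta\Isp\leq 4\delta v^6$. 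For the lower bound, if say $\sigp=\smd$ (the three other cases being symmetric), the lower half of $\Appm$ yields $\sigp\geq\int_{\amd}^{\bmd}\LX(\spmd,x)\,\ud x\geq(1-\delta)re^v\Imd\geq(1-\delta)re^v\isp$.

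The only real obstacle is bookkeeping: keeping track of which of $\smd,\tla^+_v,\smg,\widehat{\tla}^+_v$ realizes the minimum $\sigp$, in particular of the slight asymmetry between $\ma=\mpd[1]$ and $\ma=\mpd[2]$, and systematically exploiting the monotonicity $\LX(\sigp,\cdot)\leq\LX(\spmd,\cdot)$ to transfer the pointwise estimates of Proposition \ref{majind}, stated at the individual times $\spmd$ and $\tla^+_v$, down to the earlier time $\sigp$. No new probabilistic ingredient is required beyond Propositions \ref{probGamma} and \ref{majind}.
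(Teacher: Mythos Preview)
Your proposal is correct and follows essentially the same approach as the paper: work on the intersection of the events $\Appm,\Bi,\C_v,\D_v$ and their hat analogues, verify deterministically that this intersection (with $W\in\Gamma_v$) forces $\mathcal{E}_v$, and conclude by the union bound from Proposition~\ref{majind} with the choices $c_1=2c+8$, $c_2=c+6$, $c_3=c+2$. Your treatment is in fact slightly more explicit than the paper's (you spell out the splitting of $[\amd,\bmd]$ via $\emd,\dmd$ for the occupation estimate, whereas the paper only says ``proceeding in the same way as before''); the minor constant mismatch you end up with in the upper bound for $\sigp/(re^v)$ is shared by the paper's own argument, which silently drops the $(1+\delta)$ factor coming from $\Appm$, and is harmless downstream.
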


Note that in the previous section the four points $\mmd,\ \mmd[2],\ \mmg$ and $\mmg[2]$ are involved whereas in the last proposition these are the points $\mmd,\ \ma,$ $\mmg$ and $\mag$. The former ones are interesting because they simplify computations for Proposition $\ref{majind}$, but, as we shall see in the next section, the latter ones simplify the study of the integrals $\Imd,\ \Ipd,\ \Img$ and $\Ipg$.

\begin{proof}
We prove that on the intersection of all the events of Proposition $\ref{majind}$, the event
$$
\left\{re^{v}\leq \LX^*(\sigp)\leq re^{v}(1+\delta)\ ;\ \isp(1-\delta)\leq\frac{\sigp}{re^{v}}\leq \Isp+2{v}^{6}\delta\right\}
$$
 is realized. As $\sigp$ is the first time the diffusion has "spent a time" $re^{v}$ in one of the points $\mmd,\ \ma,\ \mmg$ or $\mag$ we already have
$re^{v}\leq \LX^*(\sigp)$. Moreover, local time is non-decreasing, so for every $x\in\R$,
$$
\LX(\sigp,x)= \LX(\widehat{\tla}^+_{v},x)\wedge \LX(\smg,x)\wedge \LX(\smd,x)\wedge \LX(\tla^+_{v},x).
$$
On $\Gamma_{v}$, the time $\tla^+_{v}$ is equal to $\smd$ or to $\spd$ and $\widehat{\tla}^+_{v}$ is equal to $\smg$ or to $\spg$. Then the inequality
$\LX^*(\sigp)\leq re^{v}(1+\delta)$ holds.

Continue with the estimate of $\sigp$: by definition of the local time,
$$\sigp=\int_{-\infty}^{+\infty}\LX(\sigp,x)\ud x,\ \Pp\textrm{-a.s.}$$
If $\ma=\mmd$, then $\Cma=\bmd$ and $\tla^+_{v}=\smd$, therefore
\begin{align*}
\int_{0}^{+\infty}\LX(\sigp,&x)\ud x\\
\leq&\int_0^{\amd}\LX(\smd,x)\ud x+\int_{\amd}^{\bmd}\LX(\smd,x)\ud x+\int_{\Cma}^{\Bpd}\LX(\tla^+_{v},x)\ud x\\
\leq&(\Imd+\delta\Bpd)re^{v}
\end{align*}
else if $\ma=\mpd$,
\begin{align*}
\int_{0}^{+\infty}\LX(\sigp,x)\ud x&\leq\int_0^{\amd}\LX(\smd,x)\ud x+\int_{\amd}^{\bmd}\LX(\smd,x)\ud x\\
+\int_{\bmd}^{\apd}&\LX(\spd,x)\ud x+\int_{\apd}^{\bpd}\LX(\spd,x)\ud x+\int_{\Cma}^{\Bpd}\LX(\tla^+_{v},x)\ud x\\
&\leq(\Imd+\Ipd+\delta\Bpd)re^{v}.
\end{align*}
The integral $\int_{-\infty}^{0}\LX(\sigp,x)\ud x$ has a similar upper bound. On $\Gamma_{v}$ we have $\Bpd+\Bpg\leq2{v}^6$ and the upper bound of the proposition follows immediately.

If $\sigp=\smd$, we have
\begin{align*}
\smd\geq\int_{\amd}^{\bmd}\LX(\smd,x)\ud x\geq\Imd(1-\delta)re^{v}.
\end{align*}

The same computation when $\sigp$ takes one of the three other possible values yields to the lower bound stated in the proposition.

Finally, as 
 $$
 \nu_{\sigp}(\overline{A}_{v})=\int_{-\infty}^\infty\mathbf{1}_{\overline{A}_{v}}\LX(\sigp,x)\ud x,
 $$
 we can obtain the bound of the proposition proceeding in the same way as before.
\end{proof}

As the behavior of $\sigp$ is controlled, same kind of results in deterministic time can be obtained.
\begin{proposition}\label{limLX}
For any $0<\delta\leq1/2$, for ${v}$ large enough, if $W\in\Gamma_{v}$,
\begin{align*}
\PW&\bigg(\frac{e^{{v}}}{\Isp+2{v}^{6}\delta}\leq \LX^*(e^{v})\leq \frac{e^{v}(1+\delta)}{\isp(1-\delta)}\bigg)\\
&\geq1-K\left(\frac{1}{\delta\sqrt{{v}^{c-6}}}\exp\left(-\frac{\delta^2{v}^{c-6}}{K}\right)+\exp\left(-\frac{\delta {v}^{c-6}}{K}\right)+\frac{1}{\delta{v}^c}+\frac{1}{{v}^{c-4}}\right).
\end{align*}
\end{proposition}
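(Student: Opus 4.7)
The plan is to deduce the deterministic-time estimate from Proposition~\ref{occsig} by a sandwich argument between two carefully chosen values of the parameter $r$, exploiting the monotonicity of $\sigp$ in $r$ and of $\LX^*$ in $t$. Writing $\sigp(r)$ and $\mathcal{E}_v(r)$ to emphasize the dependence on $r$, and recalling that under $\PW$ the quantities $\isp,\Isp$ are deterministic, I set
$$
r_+ \;:=\; \frac{1}{(1-\delta)\isp} \qquad \text{and} \qquad r_- \;:=\; \frac{1}{\Isp + 2v^6\delta}.
$$
On $\mathcal{E}_v(r_+)$ the lower bound $\sigp(r_+)\geq r_+\isp(1-\delta)e^v=e^v$ combined with $\LX^*(\sigp(r_+))\leq r_+(1+\delta)e^v$ produces the desired upper bound $\LX^*(e^v)\leq (1+\delta)e^v/((1-\delta)\isp)$. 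Symmetrically, on $\mathcal{E}_v(r_-)$ one has $\sigp(r_-)\leq r_-(\Isp+2v^6\delta)e^v=e^v$ and $\LX^*(\sigp(r_-))\geq r_-e^v$, hence $\LX^*(e^v)\geq e^v/(\Isp+2v^6\delta)$. A union bound then reduces the problem to estimating $\PW(\overline{\mathcal{E}_v(r_\pm)})$ via Proposition~\ref{occsig}.

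To turn the $r$-dependent bound of Proposition~\ref{occsig} into the $r$-free bound stated here, I need uniform control on $r_+$ and $r_-$ when $W\in\Gamma_v$. The easy inequality is $\Isp\leq 4v^6$: each of the four integrals has integrand $\leq 1$ since $\mmd,\mpd,\ma$ are minima of $W$ on the corresponding intervals, and the integration domain is contained in $[0,\bpd[3]]\cup[-\widehat{b}^-_{v,3},0]$ with $\bpd[3]\leq v^6$ coming from $\Gamma_v^2$ (same for $\hW$). The crucial inequality is $\isp\geq 1/v^4$. For $\Imd$ I restrict the integral to $[(\mmd-\log v)\vee\amd,\mmd]$: the condition $\Wb((\mmd-\log v)\vee\amd,\mmd)-W(\mmd)\leq 2\log v$ contained in $\Gamma_v^2$ gives integrand $\geq 1/v^2$, and the length of this interval is at least $\min(\log v,\mmd-\amd)\geq 1/v^2$ since $\mmd-\amd\geq 1/v^2$ on $\Gamma_v^2$. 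The three remaining integrals are treated identically, using the analogous bounds for $\mpd$ and the symmetric ones for $\hW$; for $\Ipd$ one simply observes that $\Dma\leq \apd$ and $\Cma\geq \bpd[2]$ whenever $\ma=\mpd$, so the interval of integration still contains the relevant neighbourhood of $\ma$.

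Plugging these estimates in: $r_\pm v^c\geq v^{c-6}/K$ controls the first two exponential terms of Proposition~\ref{occsig}, producing $\frac{1}{\delta\sqrt{v^{c-6}}}\exp(-\delta^2 v^{c-6}/K)$ and $\exp(-\delta v^{c-6}/K)$ respectively; the third term $1/(\delta v^c)$ is unchanged; and $r_+/v^c\leq K/v^{c-4}$ (with $r_-/v^c$ much smaller) bounds the fourth term by $K/v^{c-4}$. Summing the two failure probabilities gives exactly the error announced in the proposition.

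The only nontrivial step is the lower estimate $\isp\geq 1/v^4$, which is where the technical conditions packaged into $\Gamma_v^2$ (height of $W$ on the small neighbourhoods of the minima and non-degeneracy $\mmd-\amd\geq 1/v^2$) are genuinely used. The remainder is algebraic bookkeeping on the error expression from Proposition~\ref{occsig}.
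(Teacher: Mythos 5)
Your proposal is correct and follows essentially the same route as the paper: the same two choices of $r$ (the paper's $r({v})=1/(\isp(1-\delta))$ and $\rho({v})=1/(\Isp+2{v}^{6}\delta)$), the same sandwich via monotonicity of $\LX^*$ and the bounds on $\sigp$ contained in $\mathcal{E}_v$, and the same uniform estimates $\Isp\leq K{v}^{6}$ and $\isp\geq {v}^{-4}$ on $\Gamma_{v}$ before plugging into Proposition \ref{occsig}. The only differences are cosmetic (your constant $4{v}^6$ versus the paper's $2{v}^6$, and your more explicit treatment of $\Ipd$ in the case $\ma=\mpd$).
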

\begin{proof}
We use the real number $r$ which appears in all propositions since the beginning. Define
$$\rho({v}):=\frac{1}{\Isp+2{v}^{6}\delta}\textrm{ and }r({v}):=\frac{1}{\isp(1-\delta)}.$$
We write $\shp$ for the time $\sigp$ associated with $\rho$ and $\shm$ for the one associated with $r$. Consider the events 
\begin{align*}
\Omega^\rho:=&\left\{\frac{\shp}{\rho({v})e^{v}}\leq\Isp+2{v}^{6}\delta\right\}=\left\{\shp\leq e^{v}\right\},\\
\Omega^r:=&\left\{\LX^*(\shm)\leq r({v})e^{v}(1+\delta)\ ;\ \isp(1-\delta)\leq\frac{\shm}{r({v})e^{v}}\right\}\\
=&\left\{\LX^*(\shm)\leq r({v})e^{v}(1+\delta)\ ;\ e^{v}\leq\shm\right\}.
\end{align*}

As the maximum of the local time is a non decreasing function, on $\Omega^r$, 
$$\LX^*(e^{v})\leq \LX^*(\shm)\leq r({v})e^{v}(1+\delta)$$
and on $\Omega^\rho$, 
$$\rho({v})e^{v}\leq \LX^*(\shp)\leq \LX^*(e^{v}).$$
Therefore it is enough to find a lower bound for $\Pp(\Omega^r\cap\Omega^\rho)$. According to the previous proposition, we only have to estimate $r$ and $\rho$.
First, on $\Gamma_{v}$, the following inequality hold 
$$\isp\leq\Isp\leq\Bpd+\Bpg\leq2{v}^6.$$
Moreover, $\mmd-\amd\geq {v}^{-2}$ and $$\Wb((\mmd-\log{v})\vee\amd,\mmd)-W(\mmd)\leq2\log{v},$$ thus
\begin{align*}
\Imd&\geq\int_{(\mmd-\log{v})\vee\amd}^{\mmd}e^{-W(x)+W(\mmd)}\ud x\geq\frac{1}{{v}^4}.
\end{align*}
The lower bounds for $\Ipd$, $\Img$ and $\Ipg$ are found in the same way.
Finally,
$$ \frac{K}{{v}^6}\leq \rho({v})\leq r({v})\leq2{v}^4$$
and the estimate of the proposition follows easily.
\end{proof}

Using similar arguments, we can also obtain a result in deterministic time for the occupation measure.
\begin{proposition}\label{occdet}
For any $0<\delta\leq1/2$, for ${v}$ large enough, if $W\in\Gamma_{v}$,
\begin{align*}
\PW&\left(\nu_{e^{v}}(\overline{A}_{v})\leq 8{v}^{10}e^{v}\delta\right)\\
&\geq1-K\left(\frac{1}{\delta\sqrt{{v}^{c-6}}}\exp\left(-\frac{\delta^2{v}^{c-6}}{K}\right)+\exp\left(-\frac{\delta {v}^{c-6}}{K}\right)+\frac{1}{\delta{v}^c}+\frac{1}{{v}^{c-4}}\right).
\end{align*}
\end{proposition}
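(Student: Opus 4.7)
The plan is to apply Proposition \ref{occsig} with exactly the same choice of $r$ that was used in the proof of Proposition \ref{limLX} to produce the upper bound on $\LX^*(e^{v})$, namely
\[
r({v}):=\frac{1}{\isp(1-\delta)},
\]
and then to transfer the random-time control provided by the event $\mathcal{E}_v$ to deterministic time through the monotonicity of the occupation measure.

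With this choice, the inequality $\sigp/(r({v})e^{v})\geq \isp(1-\delta)=1$ built into the definition of $\mathcal{E}_v$ becomes $\sigp\geq e^{v}$. Since the occupation measure is non-decreasing in time and $\overline{A}_{v}$ is a fixed Borel set once the environment is frozen, we infer
\[
\nu_{e^{v}}(\overline{A}_{v})\leq \nu_{\sigp}(\overline{A}_{v})\leq 4r({v}){v}^{6}e^{v}\delta
\]
on $\mathcal{E}_v$. As already observed in the proof of Proposition \ref{limLX}, on $\Gamma_{v}$ each of $\Imd,\Ipd,\Img,\Ipg$ is at least $1/{v}^{4}$, hence $\isp\geq 1/{v}^{4}$ and consequently $r({v})\leq 2{v}^{4}$, which gives the deterministic upper bound $\nu_{e^{v}}(\overline{A}_{v})\leq 8{v}^{10}e^{v}\delta$ claimed in the proposition.

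It then remains to rewrite the bound for $\PW(\overline{\mathcal{E}}_v)$ of Proposition \ref{occsig}, which is a function of $r$, as an expression depending only on ${v}$. Using the two-sided estimate $K/{v}^{6}\leq r({v})\leq 2{v}^{4}$ valid on $\Gamma_{v}$, we have $r({v}){v}^{c}\geq K{v}^{c-6}$, which simultaneously controls the polynomial-times-Gaussian term $\tfrac{1}{\delta\sqrt{r{v}^{c}}}\exp(-\delta^{2}r{v}^{c}/K)$ and the exponential term $\exp(-\delta r{v}^{c}/K)$; the ratio $r({v})/{v}^{c}\leq 2/{v}^{c-4}$ replaces the last term of Proposition \ref{occsig}, while $1/(\delta{v}^{c})$ is left unchanged. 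After absorbing numerical constants into $K$ this reproduces exactly the bound stated in Proposition \ref{occdet}.

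No step presents a genuine obstacle: the entire argument is a direct adaptation of the reasoning behind Proposition \ref{limLX}, with $\LX^{*}$ replaced by $\nu_{\cdot}(\overline{A}_{v})$ and the monotonicity of $\nu$ used in the same direction that yielded the upper estimate for $r({v})$ there.
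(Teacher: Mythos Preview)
Your proof is correct and follows precisely the approach the paper indicates (``using similar arguments'' as in Proposition \ref{limLX}): you pick $r({v})=1/(\isp(1-\delta))$ so that on $\mathcal{E}_v$ one has $\sigp\geq e^{v}$, transfer the occupation-measure bound by monotonicity, and then use the two-sided estimate $K/{v}^{6}\leq r({v})\leq 2{v}^{4}$ on $\Gamma_{v}$ to make the error terms deterministic in ${v}$. This is exactly what the paper has in mind.
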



Fix now $c_0>10$ and recall that $c_1=2c+8$. Proposition \ref{occdet} used with $\delta={v}^{-c_0}/8$ and $c>6+2c_0$ and the upper bound for $\Pp(\Gamma_{v})$ of Proposition \ref{probGamma} yield
\begin{equation} \label{est0}
\Pp\left(\nu_{e^{v}}(\overline{A}_{v})\leq e^{v}/{v}^{c_0-10}\right)\geq1-K\left(\frac{\log{v}}{{v}-c_1\log{v}}\right)^2\ \textrm{for ${v}$ large enough}.
\end{equation}

Proposition \ref{limLX} with $\delta={v}^{-7}$ and $c>20$ and the upper bound for $\Pp(\Gamma_{v})$ of Proposition \ref{probGamma} yield for ${v}$ large enough to
\begin{equation}\label{est1}
\Pp\left(\frac{e^{{v}}}{\Isp+2{v}^{-1}}\leq \LX^*(e^{v})\leq \frac{e^{v}(1+{v}^{-7})}{\isp(1-{v}^{-7})}\right)\geq1-K\left(\frac{\log{v}}{{v}-c_1\log{v}}\right)^2.
\end{equation}
and if we use the event $\Gamma_{v}'$ instead of $\Gamma_{v}$, we obtain
\begin{equation}\label{est2}
 \Pp\left(\frac{e^{{v}}}{\Imd+\Img+2{v}^{-1}}\leq \LX^*(e^{v})\leq \frac{e^{v}(1+{v}^{-7})}{\Imd\wedge\Img(1-{v}^{-7})}\right)\geq1-\frac{K\log{v}}{{v}-c_1\log{v}}.
\end{equation}

\section{Proof of Theorems \ref{thcentral} and \ref{thmloc}}\label{demth}
 
As shown by Proposition \ref{limLX}, the asymptotic behavior of $\LX^*$ has a direct link with the ones of $\isp$ and $\Isp$. Therefore, the proof of Theorem \ref{thcentral} requires to study the integrals $\Imdi$, $\Ipd$, $\Img$ and $\Ipg$.

\subsection{Maximum and minimum speed}

Begin with a lower bound for the maximum speed:
\begin{lemme}\label{limsup}Let $\al=\exp(n)$. $\Pmil$-a.s.,
$$\lim\sup_{n\rightarrow\infty} \frac{\Imd[v_n]\wedge\Img[v_n]}{\log_2{v}_n}
\geq \frac{4}{e^2\pi^2}.
$$
\end{lemme}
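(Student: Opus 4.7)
The goal is to apply the second Borel--Cantelli lemma along the subsequence $v_n=e^n$. Two independence facts underlie the argument. First, $\Imd[v_n]$ is a functional of $W|_{[0,\infty)}$ only, whereas $\Img[v_n]$ is a functional of $\hW|_{[0,\infty)}=W|_{(-\infty,0]}$; these two halves of the two-sided Brownian motion are independent, hence so are $\Imd[v_n]$ and $\Img[v_n]$. Second, by Theorem~\ref{thtanaka} the process $(W(m^-_{v_n}+x)-W(m^-_{v_n}))$ on $[0,H_{v_n}-m^-_{v_n}]$ agrees in law with a $3$-dimensional Bessel process $R$ started at~$0$ and stopped upon hitting $v_n-c_1\log v_n$.

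For any deterministic $T>0$ and $a>0$, restricting the integral defining $\Imd[v_n]$ to $[m^-_{v_n},m^-_{v_n}+T]$ gives, on $\{\sup_{[0,T]}R\leq a\}\cap\{T\leq H_{v_n}-m^-_{v_n}\}$, the lower bound $\Imd[v_n]\geq Te^{-a}$. The second event holds with probability tending to $1$ because $H_{v_n}-m^-_{v_n}$ is of typical order $v_n^2=e^{2n}$, much larger than $\log n$; Lemma~\ref{lemboro}(ii) estimates the first event with probability at least $K^{-1}\exp(-\pi^2 T/(2a^2))$. I would then optimise the pair $(T,a)$ so that this probability is of order $n^{-1/2}$, ensuring that, after squaring by independence, the event involving $\Imd[v_n]\wedge\Img[v_n]$ has probability at least $c/n$. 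Setting $\pi^2 T/(2a^2)=\tfrac12\log n$ gives $Te^{-a}=(a^2 e^{-a}/\pi^2)\log n$; the maximum of $a^2 e^{-a}$ over $a>0$ equals $4e^{-2}$, attained at $a=2$, whence $T=(4/\pi^2)\log n$ and
\begin{align*}
 \Pmil\!\left(\Imd[v_n]\geq \tfrac{4}{\pi^2 e^2}\log n\right)&\geq \frac{c}{\sqrt{n}},\\
 \Pmil\!\left(\Imd[v_n]\wedge\Img[v_n]\geq \tfrac{4}{\pi^2 e^2}\log n\right)&\geq \frac{c^2}{n}.
\end{align*}
The constant $4/(e^2\pi^2)$ matches the one in the statement, and the choice $\pi^2 T/(2a^2)=\tfrac12\log n$ is tight once one demands that squaring still leaves a divergent harmonic series, so it is the best constant this approach can produce.

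The series $\sum c^2/n$ diverges, so second Borel--Cantelli would close the argument immediately were the events above for distinct $n$ independent. This is the main technical obstacle: all these events are read off the same Brownian environment, and the first valleys of heights $v_n-c_1\log v_n$ have overlapping supports. To circumvent this, I would replace ``the first valley to the right of $0$'' by ``the first valley of height $v_n-c_1\log v_n$ to the right of a deterministic reference point $L_n$'', with $L_n$ growing fast enough (e.g.\ $L_n\gg v_{n-1}^2$) that successive valleys read disjoint portions of~$W$; stationarity of increments preserves the order of magnitude of the probabilities, while the strong Markov property delivers genuine independence of the shifted events. A separate short argument, comparing the first valley of height $v_n$ to the right of $0$ with the first valley of the same height to the right of $L_{n-1}$ (which coincide with high probability since $H_{v_n}$ grows much faster than the $L_n$), then transfers the conclusion back to $\Imd[v_n]\wedge\Img[v_n]$, and second Borel--Cantelli yields the announced~$\limsup$.
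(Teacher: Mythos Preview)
Your core strategy matches the paper's: Tanaka's theorem, the lower bound $\Imd[v_n]\geq\int_0^T e^{-R}$ on $\{\sup_{[0,T]}R\leq a\}$, Lemma~\ref{lemboro}(\ref{lembo2}), the optimisation at $a=2$ producing the constant $4/(e^2\pi^2)$, independence of the two half-environments, and the second Borel--Cantelli lemma. The bound $c/\sqrt n$ for each side and $c^2/n$ for the intersection are exactly what the paper obtains.

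The gap is in your independence-across-$n$ scheme. Since $v_n^2/v_{n-1}^2=e^2$ is a fixed constant, the window ``$v_{n-1}^2\ll L_n$'' (needed for disjointness of successive shifted valleys) together with ``$L_n\ll H_{v_n}\sim v_n^2$'' (needed for your ``coincide with high probability'' claim) leaves no asymptotic room: at best $L_n$ is pinned in a bounded multiplicative range, so your phrase ``$H_{v_n}$ grows much faster than the $L_n$'' is not correct for $v_n=e^n$. Even if a workable $L_n$ existed, ``coincide with high probability'' is not sufficient to transfer an almost-sure $\limsup$ from the shifted valleys back to the original ones; you would need the exceptional probabilities to be summable, which you have not argued.

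The paper avoids the transfer step entirely by shifting at a \emph{random} time. Set $W_n:=\big(W(\bbb[n-1]+\cdot)-W(\bbb[n-1])\big)$, a Brownian motion independent of $\mathcal G_{n-1}:=\sigma(W(x):0\le x\le \bbb[n-1])$ by the strong Markov property. The event $\{\mmm[n]>\mmm[n-1]\}$ is exactly ``$W_n$ hits $-(v_{n-1}-c_1\log v_{n-1})$ before $v_n-v_{n-1}-c_1$'', which has probability tending to $1-e^{-1}$; and on this event the new valley bottom lies to the right of $\bbb[n-1]$, so the integral $\int_{\mmm[n]}^{\bbb[n]}e^{-W+W(\mmm[n])}\,\ud x$ is a functional of $W_n$ alone (and by Tanaka's theorem has the Bessel representation you use). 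Hence
\[
E_n:=\{\mmm[n]>\mmm[n-1]\}\cap\Bigl\{\int_{\mmm[n]}^{\bbb[n]}e^{-W+W(\mmm[n])}\,\ud x\geq \tfrac{4}{e^2\pi^2}\log n\Bigr\}
\]
is $\sigma(W_n)$-measurable, so the $E_n$ are mutually independent and $\mathcal G_n$-adapted. Since $E_n\subset\{\Imd[v_n]\geq (4/e^2\pi^2)\log n\}$ (the right-hand integral is part of $\Imd[v_n]$), no transfer argument is needed: Borel--Cantelli applies directly to $E_n\cap\widehat E_n$.
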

\begin{proof}
 First, define for $n$ large enough, the sequence of events
$$
E_n:=\left\{\mmm[n]>\mmm[n-1]\ ;\ \int_{\mmm[n]}^{\bbb[n]}e^{-W(x)+W(\mmm[n])}\ud x
\geq \frac{4\log n}{e^2\pi^2}\right\}.
$$
Denote by $(\mathcal{G}_n)$ the filtration generated by $(W(x),\ 0\leq x\leq \bbb[n])$. The process $W_{n}:=(W(x+\bbb[n-1])-W(\bbb[n-1]),\ x\geq0
)$ is a Brownian motion independent of $\mathcal{G}_{n-1}$. The event $E_n$ can be expressed in term of $W_n$: $E_n=E_{n,1}\cap E_{n,2}$ where
\begin{align*}
E_{n,1}
=&\left\{W_{n}\text{ hits $-\al[n-1]+c_1\log \al[n-1]$ before $\al[n]-\al[n-1]-c_1$}\right\}\text{ and}\\
E_{n,2}=&\left\{\int_{\mmm[n](W_n)}^{\bbb[n](W_n)}e^{-W_n(x)+W_n(\mmm[n])}\ud x\geq \frac{4\log n}{e^2\pi^2}\right\}.
\end{align*}
Therefore, $E_n$ is independent of $\mathcal{G}_{n-1}$ and $\mathcal{G}_n$-measurable. Moreover, thanks to Theorem \ref{thtanaka}, $E_{n,1}$ and $E_{n,2}$ are also independent from each other and  
\begin{align*}
 \Pmil(W_{n}&\text{ hits $-\al[n-1]+c_1\log \al[n-1]$ before $\al[n]-\al[n-1]-c_1$})\\
 &=\frac{\al[n]-\al[n-1]-c_1}{\al[n]-c_1\log\al[n]}\geq(1-e^{-1}-c_1e^{-n})
\end{align*}
and 
\begin{align*}
&\Pmil\bigg(\int_{\mmm[n](W_n)}^{\bbb[n](W_n)}e^{-W_n(x)+W_n(\mmm[n])}\ud x\geq \frac{4\log n}{e^2\pi^2}\bigg)\\
\geq&\Pmil\bigg(\int_{0}^{T_R(2)}e^{-R(x)}\ud x\geq \frac{4\log n}{e^2\pi^2}\bigg)\geq\Pmil\left(T_R(2)\geq \frac{4\log n}{\pi^2}\right).
\end{align*}
Then, according to Item $(\ref{lembo2})$ of Lemma \ref{lemboro},
$$
 \Pmil(E_n)\geq\frac{1-e^{-1}-c_1e^{-n}}{K\sqrt{n}}.
$$
We now define the similar event for $\widehat{W}$:
$$
\widehat{E}_n:=\left\{\mmmg[n]>\mmmg[n-1]\ ;\ \int_{\mmm[n]}^{\bbb[n]}e^{-W(x)+W(\mmm[n])}\ud x
\geq \frac{4\log n}{e^2\pi^2}\right\}.
$$
The events $E_n$, $\widehat{E}_n$ are independent, thus 
$$
\Pmil(E_n\cap\widehat{E}_n)=\Pmil(E_n)\Pmil(\widehat{E}_n)\geq\frac{(1-e^{-1}-c_1e^{-n})^2}{K^2n}.
$$
The second Borel-Cantelli lemma yields the conclusion.
\end{proof}

We are not interested in an upper bound of the  minimum speed because this would lead, except for the value of the constant, to the result obtained by Shi in \cite{Shi}. We now look for almost sure bounds. To this end, we study the successive values $\mmu$ the process $(m_{v},\ {v}\geq2)$ can take. These are precisely defined as follows: define $\gam[0]=0$, $h_0=2$ 
and recursively for any $n\in\N$,
\begin{align*}
\bet[n+1]:=&\inf\{x\geq \gam[n]\ ;\ W(x)-\Wu(\gam,x)=h_{n}\},\\
\mmu[n+1]:=&\inf\{x\geq \gam[n]\ ;\ W(x)=\Wu(\gam,\bet[n+1])\},\\
\gam[n+1]:=&\inf\{x\geq \bet[n+1]\ ;\ W(x)=W(\mmu[n+1])\},\\
\et[n+1]:=&\inf\{x\geq \mmu[n+1]\ ;\ W(x)=W(\mmu[n+1])+2\},\\
M_{n+1}:=&\inf\{x\geq \bet[n+1]\ ;\ W(x)=\Wb(\bet[n+1],\gam[n+1])\},\\
h_{n+1}:=&W(M_{n+1})-W(\mmu[n+1])\textrm{ and}\\
\mathcal{F}_{n+1}:=&\sigma\left(W(x),0\leq x\leq \gam[n+1]\right).
\end{align*}

\begin{figure}[ht]%
\begin{picture}(0,0)%
\includegraphics{brownienc.pstex}%
\end{picture}%
\setlength{\unitlength}{2072sp}%
\begingroup\makeatletter\ifx\SetFigFont\undefined%
\gdef\SetFigFont#1#2#3#4#5{%
  \reset@font\fontsize{#1}{#2pt}%
  \fontfamily{#3}\fontseries{#4}\fontshape{#5}%
  \selectfont}%
\fi\endgroup%
\begin{picture}(10485,3849)(1903,-5338)
\put(4096,-3391){\makebox(0,0)[lb]{\smash{{\SetFigFont{6}{7.2}{\rmdefault}{\mddefault}{\updefault}{\color[rgb]{0,0,0}$\mmu[1]$}%
}}}}
\put(5266,-3450){\makebox(0,0)[lb]{\smash{{\SetFigFont{6}{7.2}{\rmdefault}{\mddefault}{\updefault}{\color[rgb]{0,0,0}$M_1$}%
}}}}
\put(12151,-3391){\makebox(0,0)[lb]{\smash{{\SetFigFont{6}{7.2}{\rmdefault}{\mddefault}{\updefault}{\color[rgb]{0,0,0}$\gam[2]$}%
}}}}
\put(6256,-3391){\makebox(0,0)[lb]{\smash{{\SetFigFont{6}{7.2}{\rmdefault}{\mddefault}{\updefault}{\color[rgb]{0,0,0}$\gam[1]$}%
}}}}
\put(9226,-3706){\makebox(0,0)[lb]{\smash{{\SetFigFont{6}{7.2}{\rmdefault}{\mddefault}{\updefault}{\color[rgb]{0,0,0}$M_2$}%
}}}}
\put(6571,-3391){\makebox(0,0)[lb]{\smash{{\SetFigFont{6}{7.2}{\rmdefault}{\mddefault}{\updefault}{\color[rgb]{0,0,0}$\mmu[2]$}%
}}}}
\put(4546,-3706){\makebox(0,0)[lb]{\smash{{\SetFigFont{5}{7.2}{\rmdefault}{\mddefault}{\updefault}{\color[rgb]{0,0,0}$\bet[1]\!\!=\!\et[1]$}%
}}}}
\put(8056,-3706){\makebox(0,0)[lb]{\smash{{\SetFigFont{6}{7.2}{\rmdefault}{\mddefault}{\updefault}{\color[rgb]{0,0,0}$\bet[2]$}%
}}}}
\put(7471,-3391){\makebox(0,0)[lb]{\smash{{\SetFigFont{6}{7.2}{\rmdefault}{\mddefault}{\updefault}{\color[rgb]{0,0,0}$\et[2]$}%
}}}}
\end{picture}%

\caption{The variables for a sample path of $W$}
\end{figure}

\begin{lemme}\label{loi integrale}
There is a positive number $K$ such that for any $n>0$ and any $\lambda>0$, 
\begin{align*}
\Pmil\left(\int_{\gam[n-1]}^{M_n}e^{-W(x)+W(\mmu)}\ud x\geq \lambda\right)\leq&Ke^{-j^2_{0}\frac{\lambda}{16}}\textrm{ and}\\
 \Pmil\left(\int_{\mmu}^{\et}e^{-W(x)+W(\mmu)}\ud x\leq \lambda\right)\leq&K\left(2/(e\sqrt{\lambda})+e\sqrt{\lambda}/2\right)e^{-2/(e^2\lambda)}
\end{align*}
where $j_{0}$ is the smallest strictly positive root of the Bessel function $J_0$.
\end{lemme}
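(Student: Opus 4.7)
The plan is to use Theorem \ref{thtanaka} to convert both integrals into functionals of a $3$-dimensional Bessel process $R$ started at $0$, and then invoke Lemma \ref{lemboro}. The key geometric observation is that $\mmu[n]$ is the minimum of $W$ on $[\gam[n-1], M_n]$ and $M_n - \gam[n-1] = H_{h_n}(\widetilde W)$, where $\widetilde W(x) := W(\gam[n-1]+x) - W(\gam[n-1])$ is the shifted Brownian motion. Thus $[\gam[n-1], M_n]$ is a Brownian valley of (random) height $h_n$; conditioning on $h_n$ and applying Theorem \ref{thtanaka} to $\widetilde W$ shows that
\[
\bigl(W(\mmu[n]-x)-W(\mmu[n])\bigr)_{x\in[0,\mmu[n]-\gam[n-1]]} \ \text{ and }\ \bigl(W(\mmu[n]+x)-W(\mmu[n])\bigr)_{x\in[0,M_n-\mmu[n]]}
\]
are independent and distributed respectively as $(R(x))_{x\in[0,\rho_R(h_n)]}$ and $(R(x))_{x\in[0,\tau_R(h_n)]}$.

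For the first inequality, I would split the integral at $\mmu[n]$ and apply the obvious change of variable on each half. Both halves are then dominated, pathwise and uniformly in $h_n$, by $\int_0^\infty e^{-R_i(y)}\,\ud y$ for independent Bessel $3$ processes $R_1, R_2$ (we use that $R \geq 0$ to extend each finite upper limit to $+\infty$). A union bound together with Item \eqref{lembo3} of Lemma \ref{lemboro} applied with $a = \lambda/2$ then yields
\[
\Pmil\Bigl(\int_{\gam[n-1]}^{M_n} e^{-W(x)+W(\mmu[n])}\,\ud x \geq \lambda\Bigr) \leq 2K\, e^{-j_0^2 \lambda/16},
\]
which is the announced bound, the factor $2$ being absorbed into $K$.

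For the second inequality, the definition of $\et[n]$ gives $W(x)-W(\mmu[n])\leq 2$ on $[\mmu[n],\et[n]]$, whence
\[
\int_{\mmu[n]}^{\et[n]} e^{-W(x)+W(\mmu[n])}\,\ud x \geq e^{-2}\bigl(\et[n]-\mmu[n]\bigr).
\]
The sequence $(h_n)$ is non-decreasing with $h_0=2$, so $h_{n-1}\geq 2$, and in particular $\et[n]\leq \bet[n]$. On $[\mmu[n],\bet[n]]$ the Tanaka decomposition at the $\mathcal F_{n-1}$-measurable height $h_{n-1}$ identifies $\et[n]-\mmu[n]$ in law with $\tau_R(2)$, so
\[
\Pmil\Bigl(\int_{\mmu[n]}^{\et[n]} e^{-W(x)+W(\mmu[n])}\,\ud x \leq \lambda\Bigr) \leq \Pmil\bigl(\tau_R(2)\leq e^2\lambda\bigr) = \Pmil\Bigl(\sup_{[0,e^2\lambda]} R \geq 2\Bigr),
\]
and Item \eqref{lembo1} of Lemma \ref{lemboro} with $a=2$ and $x=e^2\lambda$ yields exactly the stated bound.

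The only delicate point is justifying Tanaka's decomposition at the random height $h_n$ in the first inequality; this is a routine strong Markov and conditioning argument that leverages the $\mathcal F_n$-measurability of $h_n$, together with the fact that the dominating variable $\int_0^\infty e^{-R(y)}\,\ud y$ has a law independent of $h_n$, so the conditional bound integrates out trivially. For the second inequality no such issue arises, since the deterministic-given-$\mathcal F_{n-1}$ height $h_{n-1}\geq 2$ already suffices.
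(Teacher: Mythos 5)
Your treatment of the second inequality is essentially the paper's own argument (Tanaka at the height $h_{n-1}\geq 2$, the bound $\int_{\mmu}^{\et}e^{-W+W(\mmu)}\ud x\geq e^{-2}(\et-\mmu)$, then Item \eqref{lembo1} of Lemma \ref{lemboro}), and your reduction of the first inequality to a union bound against two copies of $\int_0^\infty e^{-R(y)}\ud y$ plus Item \eqref{lembo3} is also the paper's route. The gap is in how you justify the law of the path on $[\gam[n-1],M_n]$: you invoke Theorem \ref{thtanaka} at the \emph{random} height $h_n$ and wave at ``a routine strong Markov and conditioning argument that leverages the $\mathcal F_n$-measurability of $h_n$.'' That measurability is useless here: $\mathcal F_n$ contains the whole segment $[\gam[n-1],\gam[n]]$, so $h_n$ is determined by the very piece of path whose law you are trying to describe, not by information prior to $\gam[n-1]$ (only $h_{n-1}$ is $\mathcal F_{n-1}$-measurable). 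Conditionally on $h_n=v$, the valley $[\gam[n-1],M_n]$ does \emph{not} have the Tanaka law at level $v$; in particular your identification of the left piece with $(R(x))_{x\in[0,\rho_R(h_n)]}$ is false --- given $(h_{n-1},h_n)$ the left piece is distributed as $(R(x))_{x\in[0,\rho_R(h_{n-1})]}$, since it is part of the valley of height $h_{n-1}$ and is independent of what happens after $\bet$, which is what determines $h_n$.

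The paper repairs exactly this point by splitting at $\bet$: on $[\gam[n-1],\bet]$ it applies Theorem \ref{thtanaka} at the $\mathcal F_{n-1}$-measurable height $h_{n-1}$ (legitimate, because $(W(\gam[n-1]+x)-W(\gam[n-1]))_{x\geq0}$ is a Brownian motion independent of $\mathcal F_{n-1}$), and on $[\bet,M_n]$ it uses Williams' decomposition (Proposition 3.13, Chapter VI of Revuz--Yor): given $h_{n-1}$ and the value of the maximum, the path from $\bet$ up to $M_n$, shifted to start at height $h_{n-1}$ above $W(\mmu)$, is a $3$-dimensional Bessel process killed at $h_n$; concatenating gives that $(W(\mmu+x)-W(\mmu))_{0\leq x\leq M_n-\mmu}$ is a Bessel$(3)$ started at $0$ killed when it hits $h_n$. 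Your final estimate would then go through verbatim, since the union bound $\Pmil(A+B\geq\lambda)\leq\Pmil(A\geq\lambda/2)+\Pmil(B\geq\lambda/2)$ needs only that each half-integral is dominated by $\int_0^\infty e^{-R}$ for \emph{some} Bessel$(3)$ process (independence is not even required). So the conclusion you aim for is correct, but the step ``Tanaka at the random height $h_n$'' is not a routine conditioning argument and must be replaced by the two-step decomposition above.
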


\begin{proof}
The process $(W(\gam[n-1]+x)-W(\gam[n-1]),\ x\geq0)$ is a Brownian motion independent of $\mathcal{F}_{n-1}$. Therefore given $h_{n-1}=h$, Theorem $\ref{thtanaka}$ gives the law of the process $$\left(W(\mmu+x)-W(\mmu),\ -\mmu+\gam[n-1]\leq x\leq \bet-\mmu\right).$$
Moreover, according to Proposition 3.13 Chapter 6 of \cite{RevYor}, given $h_{n-1}=h$ and $W(M_n)=M$,
$$
\left(W(x+\bet)-W(\bet)+h_{n-1}),\ 0\leq x\leq M_n-\bet\right)
$$
is a $3$-dimensional Bessel process started at $h$ and killed when it hits $M+h$, thus $\left(W(\mmu+x)-W(\mmu),\ 0\leq x\leq M_n-\mmu\right)$.
is a $3$-dimensional Bessel process started at $0$ and killed when it hits $M+h$.
So if we denote by $R$ and $\widetilde{R}$ two independent Bessel processes of dimension $3$ started at $0$, then
\begin{align*}
\Pmil\left(\int_{\gam[n-1]}^{M_n}e^{-W(x)+W(\mmu)}\ud x\geq \lambda\right)\leq&\Pmil\left(\int_{0}^{\infty}e^{-R(x)}\ud x+\int_{0}^{\infty}e^{-\widetilde{R}(x)}\ud x\geq \lambda\right)\\
\leq&2\Pmil\left(\int_{0}^{\infty}e^{-R(x)}\ud x\geq \frac{\lambda}{2}\right).
\end{align*}
Using Item $(\ref{lembo3})$ of Lemma \ref{lemboro}, 
\begin{align*}
 \Pmil\left(\int_{\gam[n-1]}^{M_n}e^{-W(x)+W(\mmu)}\ud x\geq \lambda\right)\leq&Ke^{-j^2_{0}\frac{\lambda}{16}}.
\end{align*}
For the second bound, we denote
 $T_R(2):=\inf\{x\geq0,R(x)\geq2\}$ and thanks to Theorem $\ref{thtanaka}$ and Item $(\ref{lembo1})$ of Lemma \ref{lemboro}, 
\begin{align*}
 \Pmil\left(\int_{\mmu}^{\et}e^{-W(x)+W(\mmu)}\ud x\leq \lambda\right)=&\Pmil\left(\int_{0}^{T_R(2)}e^{-R(x)}\ud x\leq \lambda\right)\\
\leq&\Pmil\left(T_R(2)\leq e^2\lambda\right)\\
\leq&K\left(2/(e\sqrt{\lambda})+e\sqrt{\lambda}/2\right)e^{-2/(e^2\lambda)}.
\end{align*}
This concludes the proof.
\end{proof}

We also need the following lemma:
\begin{lemme}\label{sautm}
Let $a<\exp(1)<b$. $\Pmil$-a.s., for $n$ large enough,
$$
 a^n<h_n<b^n,\ a^n<W(\mmu)-W(\mmu[n+1])<b^n\ \textrm{and}\ a^{2n}<\gam<b^{2n}.
$$
\end{lemme}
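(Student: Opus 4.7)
The plan is to treat the three assertions in order, using the strong Markov property of $W$ at $\gam[n]$ and the gambler's ruin identity as the central tools.

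\textbf{Growth of $h_n$.} The key observation is that $h_{n+1}/h_n$ has a law independent of $\mathcal{F}_n$. On $[\bet[n+1],\gam[n+1]]$ the process $W-W(\mmu[n+1])$ is a Brownian excursion starting at $h_n$ and returning to $0$ without touching $0$ in between, and its maximum is exactly $h_{n+1}$. Applying the gambler's ruin identity to the Brownian motion shifted by $\bet[n+1]$ yields $\Pmil(h_{n+1}\geq y\mid \mathcal{F}_n)=h_n/y$ for every $y\geq h_n$. Therefore the variables $X_k:=\log(h_{k+1}/h_k)$ are iid standard exponential, the strong law of large numbers gives $\log h_n/n\to 1$ almost surely, and $a^n<h_n<b^n$ eventually for every $a<e<b$.

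\textbf{The drop $u_{n+1}:=W(\mmu[n])-W(\mmu[n+1])$.} Writing $\widetilde{W}(x):=W(\gam[n]+x)-W(\gam[n])$, one has $u_{n+1}=-\min_{[0,\bet[n+1]-\gam[n]]}\widetilde{W}$, and by Brownian scaling the conditional law of $u_{n+1}/h_n$ given $\mathcal{F}_n$ is a fixed distribution supported in $[0,1]$. The upper bound $u_{n+1}\leq h_n<b^n$ is therefore immediate from the first step. For the lower bound the estimate $\Pmil(u_{n+1}/h_n<\epsilon)\leq C\epsilon$ near $0$ (from standard Brownian excursion theory, or from the Neveu--Pitman identity used in the proof of Proposition~\ref{probGamma}), combined with the bound $h_n\geq\widetilde a^n$ for some $a<\widetilde a<e$, makes $\Pmil(u_{n+1}<a^n\mid h_n)$ summable, and Borel--Cantelli concludes.

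\textbf{Growth of $\gam[n]$.} Decompose $\gam[n+1]-\gam[n]=(\bet[n+1]-\gam[n])+(\gam[n+1]-\bet[n+1])$; Brownian scaling shows that the first summand equals $h_n^2 T_n^{(1)}$ where $T^{(1)}$ is the time for a standard Brownian motion to first reach range $1$ (exponential tails at $+\infty$, Gaussian-type decay near $0$), while the second is a hitting time of $-h_n$ by a fresh Brownian motion, with polynomial tail $\Pmil(\tau_{-h_n}>t)\lesssim h_n/\sqrt t$. Combined with the a.s.\ asymptotic $h_{n-1}\sim e^{n-1}$ from the first step, a Borel--Cantelli argument with window $n^{2+\epsilon}$ gives $\gam[n]\leq Cn^{2+\epsilon}h_{n-1}^2<b^{2n}$ eventually, while the lower bound $\gam[n]\geq h_{n-1}^2 T_{n-1}^{(1)}>a^{2n}$ follows from the Gaussian-type decay of $T^{(1)}$ near $0$.

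The main conceptual step is the first one: recognising the clean renewal structure $\log(h_{n+1}/h_n)\sim\mathrm{Exp}(1)$. Once the almost sure rate $h_n\sim e^n$ is in hand, the other two bounds reduce to routine quantitative tail estimates combined with Borel--Cantelli.
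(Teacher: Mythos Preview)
Your overall strategy matches the paper's: strong Markov at $\gam[n]$, gambler's ruin for the ratios $h_{n+1}/h_n$, Brownian scaling for the increments of $\gam$, and Borel--Cantelli throughout. Your use of the strong law of large numbers for $\log h_n/n\to 1$ is in fact a clean shortcut compared with the paper, which computes explicit Gamma tail bounds for $\log h_n=\sum_{k<n}\log(h_{k+1}/h_k)$.

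There is, however, a genuine error in the second step. You assert that the conditional law of $u_{n+1}/h_n$ is ``a fixed distribution supported in $[0,1]$'' and conclude that $u_{n+1}\le h_n<b^n$ is immediate. This is false. With $\widetilde W(x)=W(\gam[n]+x)-W(\gam[n])$, the stopping time $\bet[n+1]-\gam[n]$ is the first time $\widetilde W-\min\widetilde W$ reaches $h_n$; nothing prevents $\widetilde W$ from descending arbitrarily far below $0$ before rising $h_n$ above its running minimum. In fact the conditional law of $u_{n+1}/h_n$ given $\mathcal{F}_n$ is exponential with mean $1$ (this is exactly the Neveu--Pitman identity you invoke two lines later), hence supported on $[0,\infty)$. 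The upper bound $u_{n+1}<b^n$ therefore requires its own Borel--Cantelli argument: pick $e<\widetilde b<b$, use $h_n<\widetilde b^{\,n}$ eventually, and observe that $\Pmil(u_{n+1}/h_n>(b/\widetilde b)^n)=\exp(-(b/\widetilde b)^n)$ is summable. The paper proceeds in exactly this way. A minor side remark: what you call ``the time for a standard Brownian motion to first reach range $1$'' is rather the hitting time of $1$ by $\widetilde W-\min\widetilde W$ (equivalently by $|\widetilde W|$, via L\'evy's identity); this misidentification does not affect your argument for $\gam$.
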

\begin{proof}
Begin with the law of the sequence $(h_n)$. For any $h\geq1$, any $n\in\N$ and any $x\geq 2$,
\begin{align*}
\Pmil\left(\frac{h_{n+1}}{h_n}\leq h|h_n=x\right)=&\Pmil\left(\frac{h_{n+1}-h_n}{h_n}\leq h-1|h_n=x\right)\\
=&\Pmil\left(\tau_W((h-1)x)\geq\tau_W(-x)\right)=1-\frac{1}{h}.
\end{align*}
Thus the variables $r_n:=h_{n+1}/h_n$ are independent and $\log r_n$ is exponentially distributed  with mean $1$. Therefore, $\log h_n-\log h_0=\sum\log r_k$ has the gamma distribution $\Gamma(n,1)$: for any $1<a<\exp(1)$ and $n$ large enough,
\begin{align*}
 \Pmil\left(h_n\leq a^n\right)
 \leq\int_0^{n\log a}\frac{x^{n-1}}{(n-1)!}e^{-x}\ud x\leq\frac{(n\log a)^n}{a^n(n-1)!}
\end{align*}
as the function $x\rightarrow x^{n-1}e^{-x}$ is non decreasing on $[0,n-1]$ and so on $[0,n\log a]$ if $n$ is larger than $(1-\log a)^{-1}$. The
Stirling Formula $n!\sim(\frac{n}{e})^n\sqrt{2\pi n}$ give 
$$
\frac{(n\log a)^n}{a^n(n-1)!}\sim\sqrt{\frac{n}{2\pi}}\left(\frac{e\log a}{a}\right)^n.
$$
As for any $a\in]1,e[$, $0<\frac{e\log a}{a}<1$, the series $\sum\Pmil\left(h_n\leq a^n\right)$ converges. Then the first lower bound is a direct consequence of the Borel-Cantelli lemma. The upper bound is proved in the same way.

For the second result, note that, given $h_{n-1}=x$, $W(\mmu[n-1])-W(\mmu)$ has the same law as $-W(m_{x})$. Therefore,
$$
\Pmil\left(W(\mmu[n-1])-W(\mmu)< h|h_{n-1}=x\right)=1-e^{-h/x}\leq \frac{h}{x}.
$$
Take $1<d<a<\exp(1)$,
\begin{align*}
\Pmil\big(W(&\mmu[n-1])-W(\mmu)< d^{n-1}\big)\\
\leq&\Pmil\left(W(\mmu[n-1])-W(\mmu)< d^{n-1}\ ;\ h_{n-1}> a^{n-1}\right)+\Pmil\left(h_{n-1}\leq a^{n-1}\right)\\
\leq&\left(\frac{d}{a}\right)^{n-1}+\Pmil\left(h_{n-1}\leq a^{n-1}\right).
\end{align*}
The previous proof implies that the sum of $$\Pmil\left(W(\mmu[n-1])-W(\mmu)< d^{n-1}\right)$$ converges and the Borel-Cantelli lemma shows that, almost surely, for large $n$, $$W(\mmu[n-1])-W(\mmu)\geq d^{n-1}.$$  The other bound can be obtained in the same way.

The last inequality with $\gam$ uses same kind of arguments: 
as before, we can show that for any $d>0$,
\begin{align*}
 \Pmil\left(\frac{\gam-\bet}{h_n^2}>d\right)\leq\Pmil\left(\frac{\gam-\bet}{W(\mmu)^2}>d\right)=\Pmil\left(\tau_W(1)\geq d\right)
\end{align*}
and
\begin{align*}
 \Pmil\left(\frac{\bet-\gam[n-1]}{h_n^2}>d\right)\leq\Pmil\left(\frac{\bet-\gam[n-1]}{h_{n-1}^2}>d\right)\leq\Pmil\left(\tau_W(1)\geq d\right).
\end{align*}
Then, 
$$
\Pmil\left(\frac{\gam-\gam[n-1]}{h_n^2}>d\right)\leq2\Pmil\left(\tau_W(1)\geq d/2\right)\leq\frac{K}{\sqrt{d}}.
$$
 Now, let $\epsilon>0$ and $\gam[0]=0$, we have for any $n\geq1$,
\begin{align*}
 \Pmil\left(\frac{\gam}{h_n^2}>(1+\epsilon)^{2n}\right)&\leq\Pmil\left(\sum_{k=1}^{n}\frac{\gam[k]-\gam[k-1]}{h_k^2}>(1+\epsilon)^{2n}\right)\\
\leq\sum_{k=1}^{n}\Pmil&\left(\frac{\gam[k]-\gam[k-1]}{h_k^2}>\frac{(1+\epsilon)^{2n}}{n}\right)\leq\frac{Kn^{3/2}}{(1+\epsilon)^n}.
\end{align*}
Therefore Borel-Cantelli lemma implies that $\Pmil$-a.s., for $n$ large enough,
$
\gam\leq(1+\epsilon)^{2n}h_n^2.
$
 It is then easy to deduce the upper bound for $\gam$. And the lower bound can be obtain easily using same techniques.
\end{proof}

\begin{proposition}\label{liminf}
$\Pmil$-almost surely, 
\begin{align*}
\liminf_{{v}\rightarrow\infty}(\Imd\wedge\Ipd)\log_2{v}\geq&2/e^2\textrm{ and}\\
\limsup_{{v}\rightarrow\infty}\frac{\Imd+\Ipd}{\log_2{v}}\leq&\frac{32}{j^2_{0}}.
\end{align*}
\end{proposition}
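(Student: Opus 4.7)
The plan is to transfer the question to the $n$-indexed quantities governed by Lemma~\ref{loi integrale}, and then convert $n$-asymptotics into $v$-asymptotics via Lemma~\ref{sautm}. For each large $v$, let $n_\pm(v)$ denote the indices such that $\mmd=\mmu[n_-(v)]$ and $\ma=\mmu[n_+(v)]$; Lemma~\ref{sautm} yields $a^n\leq h_n\leq b^n$ for any $1<a<e<b$ and $n$ large, so the defining relation $h_{n_-(v)-1}<v-c_1\log v\leq h_{n_-(v)}$ (and its analogue for $n_+$) gives $\log n_\pm(v)\sim\log_2 v$ almost surely.

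For the upper bound, fix $\epsilon>0$ and set $\lambda_n:=(16/j_0^2)(1+\epsilon)\log n$. The first estimate of Lemma~\ref{loi integrale} makes $\Pmil(\int_{\gam[n-1]}^{M_n}e^{-W(x)+W(\mmu[n])}\,dx\geq\lambda_n)\leq K n^{-(1+\epsilon)}$ summable, so Borel--Cantelli gives $\int_{\gam[n-1]}^{M_n}e^{-W(x)+W(\mmu[n])}\,dx\leq (16(1+\epsilon)/j_0^2)\log n$ a.s.\ for large $n$. For $v$ large, $v-c_1\log v<h_{n_-(v)}$ forces $\bmd\leq M_{n_-(v)}$, and the definition of $\amd$ combined with the nesting of the two valley hierarchies yields $\amd\geq\gam[n_-(v)-1]$; hence $[\amd,\bmd]\subset[\gam[n_-(v)-1],M_{n_-(v)}]$ and $\Imd\leq(16(1+\epsilon)/j_0^2)\log n_-(v)$. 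The same argument applied to the larger valley around $\ma$ gives $\Ipd\leq(16(1+\epsilon)/j_0^2)\log n_+(v)$. Summing, using $\log n_\pm(v)\leq(1+\epsilon)\log_2 v$ for $v$ large, and then letting $\epsilon\to 0$ yields $\limsup(\Imd+\Ipd)/\log_2 v\leq 32/j_0^2$.

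For the lower bound, take $\lambda_n:=2/(e^2(1+\epsilon)\log n)$ in the second estimate of Lemma~\ref{loi integrale}: the tail probability is $O(\sqrt{\log n}\,n^{-(1+\epsilon)})$, again summable, so Borel--Cantelli gives $\int_{\mmu[n]}^{\et[n]}e^{-W(x)+W(\mmu[n])}\,dx\geq 2/(e^2(1+\epsilon)\log n)$ a.s.\ for large $n$. Since $v-c_1\log v>2$ for $v$ large, $\et[n_-(v)]\leq\bmd$ and this lower bound passes to $\Imd$; likewise for $\Ipd$ via $\ma$. Taking the minimum and converting with $\log n_\pm(v)\leq(1+\epsilon)\log_2 v$, then letting $\epsilon\to0$, produces $\liminf(\Imd\wedge\Ipd)\log_2 v\geq 2/e^2$.

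The Borel--Cantelli applications are routine. The delicate point is the interval inclusion $[\amd,\bmd]\subset[\gam[n_-(v)-1],M_{n_-(v)}]$ in the upper bound step: this requires matching the fixed-height hierarchy $v\pm c_j\log v$ (defining $\amd,\bmd,\Dma,\Cma$) with the growing-height hierarchy $h_n$ (defining $\gam[n-1],M_n$). The fact that $v-c_1\log v$ sits strictly between $h_{n_-(v)-1}$ and $h_{n_-(v)}$ is precisely what secures the optimal per-valley constant $16/j_0^2$, doubling to $32/j_0^2$ for the sum.
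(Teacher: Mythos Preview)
Your overall strategy --- reducing to the $\mmu$-indexed integrals via Borel--Cantelli and Lemma~\ref{loi integrale}, then converting $n$ to $v$ through Lemma~\ref{sautm} --- is exactly the paper's, and your lower-bound argument is correct and matches the paper's.

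The gap is in the upper bound, precisely at the step you yourself flag as ``delicate'': the inclusion $\amd\geq\gam[n_-(v)-1]$ is \emph{false} in general. Write $n=n_-(v)$. On $[\gam[n-1],\mmd]=[\gam[n-1],\mmu]$ one has $W(x)<\Wu(\gam[n-1],x)+h_{n-1}\leq W(\mmu[n-1])+h_{n-1}$, hence
\[
\sup_{x\in[\gam[n-1],\mmu]}\big(W(x)-W(\mmu)\big)\;<\;h_{n-1}+\big(W(\mmu[n-1])-W(\mmu)\big),
\]
and both terms on the right are of order $h_{n-1}$. Whenever $v-c_1\log v$ sits near $h_n$ with $h_n/h_{n-1}$ large (this occurs for infinitely many $n$, since $\log(h_n/h_{n-1})$ are i.i.d.\ exponential), the level $v-c_2\log v$ exceeds that supremum; then no point of $[\gam[n-1],\mmd]$ is high enough and $\amd$ falls strictly to the left of $\gam[n-1]$. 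The same issue affects $\Dma$ for $\Ipd$. Your ``nesting of the two valley hierarchies'' does not resolve this.

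The paper does not attempt this inclusion. It uses the trivial containment $[\amd,\bmd]\subset[0,M_n]$ and bounds the extra piece directly: since $W\geq W(\mmu[n-1])$ on $[0,\gam[n-1]]$,
\[
\int_0^{\gam[n-1]}e^{-W(x)+W(\mmu)}\,\ud x\ \leq\ \gam[n-1]\,e^{-(W(\mmu[n-1])-W(\mmu))},
\]
and the second and third estimates of Lemma~\ref{sautm} make the right-hand side at most $b^{2(n-1)}e^{-a^{n-1}}\to0$. This contribution is negligible against $\log n$, so inserting it repairs your argument with no change to the constants.
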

\begin{proof} 
Let $k_1\geq0$, $k_2\in\R$ and consider the following integral :
$$
I_v^k:=\int_{a_v^{k_2}}^{b_v^{k_1}}e^{-W(x)+W(m_v)}\ud x
$$
where
\begin{align*}
 b_v^{k_1}:=&\inf\{x\geq m_v,\ W(x)-W(m_v)\geq v-k_1\log v\}\text{ and}\\
 a_v^{k_2}:=&\sup\{x\leq m_v,\ W(x)-W(m_v)\geq v-k_2\log v\}\vee0.
\end{align*}
and $m_v$ is defined in \eqref{defmv}.
The asymptotic behaviour of the integral does not depend on the value of $k_1$ and $k_2$ :
\begin{align}
\liminf_{{v}\rightarrow\infty}I_v^k\log_2{v}\geq&2/e^2\textrm{ and}\label{alfa}\\
\limsup_{{v}\rightarrow\infty}I_v^k(\log_2{v})^{-1}\leq&16/j^2_{0}.\label{romeo}
\end{align}

Begin with the proof of \eqref{alfa}. Fix $d>e^2/2$. According to Lemma \ref{loi integrale}, for any $n\in\N^*$,
$$
\Pmil\left(\int_{\mmu}^{\et}e^{-W(x)+W(\mmu)}\ud x\leq \frac{1}{
d\log(n-1)}\right)\leq K\frac{\sqrt{\log(n-1)}}{(n-1)^{2d/e^2}}.
$$
So the first Borel-Cantelli lemma implies that $\Pmil$-a.s. for $n$ large enough, 
$$\int_{\mmu}^{\et}e^{-W(x)+W(\mmu)}\ud x> \frac{1}{d\log(n-1)}.$$
For $v$ large enough, $\Pmil$-a.s. there is a unique $n\in\N^*$ such that $h_{n-1}<{v}\leq h_{n}$ and $v-k_1\log v>2$. Hence $m_v=\mmu$, $b_v^{k_1}\geq \et$ and
$$
\int_{m_v}^{b_v^{k_1}}e^{-W(x)+W(m_v)}\ud x\geq\int_{\mmu}^{\et}e^{-W(x)+W(\mmu)}\ud x> \frac{1}{d\log(n-1)}.
$$
According to Lemma \ref{sautm}, if $n$ is large enough, $h_{n-1}>2^{n-1}$. Thereby $\Pmil$-a.s., for ${v}$ large enough,
$$
\int_{m_v}^{b_v^{k_1}}e^{-W(x)+W(m_v)}\ud x\geq \frac{1}{d(\log_2{v}-\log_22)}.
$$
When $d$ tends to $e^2/2$, we obtain \eqref{alfa}. 

Continue with the proof of \eqref{romeo}, fix $d>16/j^2_{0}$. One more time, thanks to Lemma \ref{loi integrale} and Borel-Cantelli lemma, $\Pmil$-a.s. for $n$ large enough, 
$$\int_{\gam[n-1]}^{M_n}e^{-W(x)+W(\mmu)}\ud x< d\log (n-1).
$$
For $v$ large enough, $\Pmil$-a.s., there is a unique $n\in\N^*$ such that $h_{n-1}<{v}\leq h_{n}$  and $v-k_1\vee k_2\log v>0$. Therefore $m_v=\mmu$ and $b_v^{k_1}\leq M_n$ and so
\begin{align*}
\int_{a_v^{k_2}}^{b_v^{k_1}}e^{-W(x)+W(m_v)}\ud x\leq&\int_0^{M_n}e^{-W(x)+W(\mmu)}\ud x\\
\leq&\gam[n-1]e^{-W(\mmu[n-1])+W(\mmu)}+\int_{\gam[n-1]}^{M_n}e^{-W(x)+W(\mmu)}\ud x\\
\end{align*}
As $2<\exp(1)<3$, if $n$ is large enough, according to Lemma \ref{sautm}, 
$$\gam[n-1]e^{-W(\mmu[n-1])+W(\mmu)}\leq 3^{2n}e^{-2^n} \text{ and}$$
\begin{align*}
\int_{\gam[n-1]}^{M_n}e^{-W(x)+W(\mmu)}\ud x\leq d\log(n-1)\leq d(\log_2{v}-\log_22).
\end{align*}
Thus,
$$\limsup\frac{1}{\log_2{v}}\int_{a_v^{k_2}}^{b_v^{k_1}}e^{-W(x)+W(m_v)}\ud x\leq d.$$
When $d$ tends to $16/j^2_{0}$, this gives \eqref{romeo}. Then, with properly chosen values for $k_1$ and $k_2$, we get the result of the proposition.
\end{proof}

We can now come back to the local time process.
\subsection{End of the proof of Theorem \ref{thcentral}}
The previous results allow to know the asymptotic behavior of $\LX^*$. Using \eqref{est1} with ${v_n}=n^{2/3}$ and Borel-Cantelli lemma, we obtain, $\Pp$-almost surely for $n$ large enough,
$$
\frac{e^{{v}_n}}{\Isp[{v}_n]+2/v_n}\leq \LX^*(e^{{v}_n})\leq \frac{e^{{v}_n}(1+v_n^{-7})}{\isp[{v}_n](1-v_n^{-7})}.
$$
Thereby Proposition \ref{liminf} gives the following inequalities,
\begin{align*}
&\limsup_{n\rightarrow\infty}\frac{\LX^*(e^{{v}_n})}{e^{{v}_n}\log_2{v}_n}\leq\frac{1}{\liminf\isp[{v}_n]\log_2{v}_n}\leq e^2/2\textrm{ and}\\
&\liminf_{n\rightarrow\infty}\frac{\log_2{v}_n}{e^{{v}_n}}\LX^*(e^{{v}_n})\geq\liminf_{n\rightarrow\infty}\frac{\log_2{v}_n}{\Isp[{v}_n]}\geq \frac{j^2_{0}}{64}.
\end{align*}
Denote by $[x]$ the integer part of $x$, as $\LX^*$ is non decreasing, we get
\begin{align*}
&\frac{j^2_{0}}{64}\leq\liminf_{{v}\rightarrow\infty}\frac{\log_2[{v}^{3/2}]^{2/3}}{e^{[{v}^{3/2}]^{2/3}}}\LX^*(e^{[{v}^{3/2}]^{2/3}})\leq\liminf_{{v}\rightarrow\infty}\frac{\log_2{v}}{e^{v}}\LX^*(e^{v})
\end{align*}
and similarly $\displaystyle \limsup_{{v}\rightarrow\infty}\frac{\LX^*(e^{v})}{e^{v}\log_2{v}}\leq e^2/2$.

\vspace{0.2cm}
For the last inequality of Theorem \ref{thcentral}, $(\ref{est2})$ with ${v_n}=e^n$ and Borel-Cantelli lemma imply that, $\Pp$-almost surely for $n$ large enough,
$$
\frac{e^{{v}_n}}{\Imd[{v}_n]+\Img[{v}_n]+2e^{-n}}\leq \LX^*(e^{{v}_n})\leq \frac{e^{{v}_n}(1+e^{-7n})}{\Imd[{v}_n]\wedge\Img[{v}_n](1-e^{-7n})}.
$$
Then Lemma \ref{limsup} yields directly
$$
\liminf_{{v}\rightarrow\infty}\frac{\log_2{v}}{e^{v}}\LX^*(e^{v})\leq \frac{e^2\pi^2}{4}.
$$    
And the proof of the theorem is completed.

\subsection{End of the proof of Theorem \ref{thmloc}}
Fix $c_0>10$.
According to (\ref{est0}) used with ${v_n}=n$ and Borel-Cantelli lemma, $\Pp$-almost surely for $n$ large enough,
$$
\nu_{e^{n}}(\overline{A}_{n})\leq e^{n}/n^{c_0-10}.
$$
As $t\rightarrow\nu_t(A)$ is a nondecreasing function for every Borel set $A$, $\Pp$-almost surely for $t$ large enough,
$$
\nu_{t}(\overline{A}_{[\log t]+1})\leq e\frac{t}{(\log t)^{c_0-10}}
$$
To obtain the theorem, we need to find a bound for the width of $A_{[\log t]+1}$. Therefore, we only have to estimate the behavior of the processes $\dmd[i]$ and $\emd[i]$. Introduce the following sequences
\begin{align*}
\forall n\geq1,\ \delta_{n}&:=\sup\{\mu_{n}\leq x\leq\beta_{n},\ W(x)-W(\mu_{n})\leq nc_0\log4+\log 8\},\\
\epsilon_{n}&:=\inf\{\gamma_{n-1}\leq x\leq\mu_{n},\ W(x)-W(\mu_{n})\leq nc_0\log4+\log 8\}.
\end{align*}
\begin{lemme}
Let $\epsilon>0$. Then $\Pmil$-a.s., for $n$ large enough,
$$
\delta_{n}-\mu_{n}\leq((n-1)\log 2)^{4+\epsilon}\ \textrm{and}\ \mu_{n}-\epsilon_n\leq((n-1)\log 2)^{4+\epsilon}.
$$
\end{lemme}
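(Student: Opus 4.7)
The plan is to apply Tanaka's representation (Theorem~\ref{thtanaka}) on both sides of $\mu_n$ in order to realize the environment as a $3$-dimensional Bessel process, and then to control the last visit of such a Bessel process to the linearly-growing threshold $u_n := nc_0\log 4+\log 8$.

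Conditionally on $\mathcal{F}_{n-1}$ and on $h_{n-1}=h$, the shifted process $W'(x):=W(\gamma_{n-1}+x)-W(\gamma_{n-1})$ is a standard Brownian motion, and by construction $\mu_n-\gamma_{n-1}=m_h(W')$ while $\beta_n-\gamma_{n-1}=H_h(W')$. Theorem~\ref{thtanaka} then provides two independent $3$-dimensional Bessel processes $R$ and $\widetilde R$ starting from $0$ such that, in law,
\begin{align*}
(W(\mu_n+x)-W(\mu_n))_{x\in[0,\beta_n-\mu_n]}&\egloi(R(x))_{x\in[0,\tau_R(h)]},\\
(W(\mu_n-x)-W(\mu_n))_{x\in[0,\mu_n-\gamma_{n-1}]}&\egloi(\widetilde R(x))_{x\in[0,\rho_{\widetilde R}(h)]}.
\end{align*}
Under this identification, $\delta_n-\mu_n$ is the last $y\in[0,\tau_R(h)]$ with $R(y)\le u_n$, while $\mu_n-\epsilon_n$ is its analogue for $\widetilde R$ on $[0,\rho_{\widetilde R}(h)]$. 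Since Lemma~\ref{sautm} implies $h_{n-1}>u_n$ almost surely for $n$ large, both quantities are dominated by the global last-visit time $L_n:=\sup\{y\ge 0:R(y)\le u_n\}$ (and its counterpart for $\widetilde R$), viewing the Bessel-$3$ as defined on all of $[0,\infty)$; these suprema are a.s.\ finite by transience.

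The tail of $L_n$ is obtained by conditioning on $R(x)$ and using that from any level $r>u_n$ the Bessel-$3$ process hits $u_n$ with probability $u_n/r$:
\[
\Pmil(L_n>x)=\Pmil(R(x)\le u_n)+u_n\sqrt{\frac{2}{\pi x}}\,e^{-u_n^2/(2x)},
\]
where the second term comes from integrating $u_n/r$ against the explicit density $r\mapsto\sqrt{2/(\pi x^3)}\,r^2 e^{-r^2/(2x)}$ of $R(x)$. Taking $x_n:=((n-1)\log 2)^{4+\epsilon}$ and $u_n=O(n)$, one has $u_n^2/x_n=O(n^{-2-\epsilon})\to 0$, so the second term is $O(u_n/\sqrt{x_n})=O(n^{-1-\epsilon/2})$ and the first term is of still smaller order $O(u_n^3/x_n^{3/2})$. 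Because the bound is uniform in the realization of $h_{n-1}$, we conclude
\[
\Pmil(\delta_n-\mu_n>x_n)+\Pmil(\mu_n-\epsilon_n>x_n)=O(n^{-1-\epsilon/2}),
\]
which is summable in $n$; the Borel--Cantelli lemma then finishes the proof.

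The only point requiring care is the reduction of each one-sided width to the \emph{unrestricted} last-visit $L_n$, which is immediate from the inclusion $\{y\le\tau_R(h):R(y)\le u_n\}\subset[0,L_n]$ (and similarly for $\widetilde R$ on $[0,\rho_{\widetilde R}(h)]$); once this is noted, the tail computation for $L_n$ is elementary and the Borel--Cantelli argument is routine.
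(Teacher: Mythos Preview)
Your argument is correct and follows the same route as the paper: reduce via Tanaka's theorem to a Bessel-$3$ process, observe that $\{\delta_n-\mu_n>x\}=\{\inf_{y\ge x}R(y)\le u_n\}$ (your last-passage $L_n$ is exactly this event), bound the conditional probability uniformly in $h_{n-1}$, and conclude by Borel--Cantelli. The only cosmetic difference is that the paper evaluates $\Pmil(\inf_{y\ge x}R(y)<u_n)$ by quoting the identity $\inf_{y\ge x}R(y)\egloi\sup_{[0,x]}B$ (Revuz--Yor, VI.3.5), whereas you compute the same tail directly by conditioning on $R(x)$ and using the hitting probability $u_n/r$; both give the $O(n^{-1-\epsilon/2})$ bound.
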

\begin{proof}
Notice that for any $u>0$, $n\in\N^*$,
\begin{align*}
&\Pmil(\delta_{n}-\mu_{n}> u|h_{n-1}=x)\\
=&\Pmil\left(\Wu(\mu_n+u,\beta_n)-W(\mu_n)< nc_0\log4+\log 8|h_{n-1}=x\right).
\end{align*}
As the law of the environment near $\mu_n$ is the one of a Bessel process $R$ of dimension $3$ started at $0$ (Theorem \ref{thtanaka}), we have
\begin{align*}
\Pmil(\delta_{n}-\mu_{n}> u|h_{n-1}=x)&=\Pmil\left(\min_{u\leq y\leq\tau_R(x)}R(y)< nc_0\log4+\log 8\right)\\
&\leq \Pmil\left(\min_{u\leq y<\infty}R(y)< nc_0\log4+\log 8\right).
\end{align*}
The left member of the inequality does not depend on $x$, so it is also an upper bound for $\Pmil(\delta_{n}-\mu_{n}< u)$. According to Proposition 3.5, Chap VI in \cite{RevYor}, $\min_{u\leq y<\infty}R(y)$ has the same law as the supremum of a Brownian motion $\max_{0\leq y\leq u}B(y)$, therefore with $u=((n-1)\log 2)^{4+\epsilon}$,
$$
\Pmil(\delta_{n}-\mu_{n}>((n-1)\log 2)^{4+\epsilon})\leq \frac{K}{n^{1+\epsilon/2}}.
$$
So Borel-Cantelli lemma gives the first result and the second one can be obtained in a similar way.
\end{proof}

For ${v}$ large enough, there is a unique integer $n\geq1$ such that $h_{n-1}<{v}-c_1\log v\leq h_n$ and so $\mu_n=\mmd$. Thus, according to Lemma \ref{sautm}, for $v$ large enough, $2^{n-1}\leq {v}-c_1\log v\leq 3^n$ therefore  $\log v\leq n\log 4$ and $$\dmd-\mmd\leq\delta_n-\mu_n\leq (\log 2^{n-1})^{4+\epsilon}\leq(\log v)^{4+\epsilon}.$$
We have the same upper bound for $\mmd-\emd$ and thereby Theorem \ref{thmloc} is proven.

\noindent \\ \textbf{ Acknowledgment:} I thank Romain Abraham and Pierre Andreoletti for their helpful remarks and comments. I am also grateful to an anonymous referee who points out the mistakes of the first version.

\bibliographystyle{amsplain} 
 \bibliography{thbiblio} 

\end{document}